\documentclass[11pt]{article}

\usepackage[tbtags]{amsmath}
\usepackage{amsthm,amsopn,amsfonts,amssymb,epsfig,anysize,tabmac}
\pagestyle{plain}
\usepackage[affil-it]{authblk}
\usepackage{tikz}

\usepackage{epsfig,amsthm,amsopn,amsfonts,amssymb,graphicx,color}
\usepackage[backref]{hyperref}
\usepackage{verbatim}

\newtheorem{theorem}{Theorem}[section]
\newtheorem{lemma}[theorem]{Lemma}
\newtheorem{prop}[theorem]{Proposition}

\newtheorem{corollary}[theorem]{Corollary}

\numberwithin{equation}{section}

\theoremstyle{definition}
\newtheorem{example}[theorem]{Example}
\newtheorem{definition}[theorem]{Definition}

\theoremstyle{remark}
\newtheorem{remark}[theorem]{Remark}

\newlength{\cellsize}
\cellsize=2.5ex

\newcommand{\content}{\mathrm{con}}

\newcommand{\tS}{\tilde S_n^0}

\newcommand{\Gr}{\mathrm{Gr}}
\newcommand{\Fl}{\mathrm{Fl}}
\newcommand{\LC}{\mathcal{LC}}

\newcommand{\id}{\mathrm{id}}

\newcommand{\sign}{\mathrm{sign}}

\newcommand{\SSYT}{\mathrm{SSYT}}
\newcommand{\QH}{\mathrm{QH}}

\newcommand{\wt}{\mathrm{wt}}

\newcommand{\et}{\tilde{e}}
\newcommand{\ft}{\tilde{f}}
\newcommand{\st}{\tilde{s}}
\newcommand{\ve}{\varepsilon}
\newcommand{\vp}{\varphi}
\newcommand{\linv}{\mathrm{linv}}

\def \d {{\mathbf d}}

\def \la {\lambda}

\def\la {\lambda}

\def\shape{ {\rm {shape}}}

\definecolor{blue}{rgb}{.255,.41,.884} 
\definecolor{red}{rgb}{1, 0, 0} 
\definecolor{green}{rgb}{.196,.804,.196} 
\definecolor{yellow}{rgb}{1,.648,0} 
\definecolor{pink}{rgb}{1,0.5,0.5}

\edef\savecatcodeat{\the\catcode`@}
\catcode`\@=11

\def\tb@ifSpecChars#1#2{#1}
\def\tb@ifNoSpecChars#1#2{#2}

\def\tableau{%
  \bgroup
  \@ifstar{\let\Tif\tb@ifNoSpecChars\tb@tableauB}
          {\let\Tif\tb@ifSpecChars\tb@tableauB}}

\def\tb@tableauB{
  \@ifnextchar[{\tb@tableauC}{\tb@tableauC[]}}

\def\tb@tableauC[#1]{\hbox\bgroup%
    \let\\=\cr
    \def\bl{\global\let\tbcellF\tb@cellNF}%
    \def\tf{\global\let\tbcellF\tb@cellH}
%
    \dimen2=\ht\strutbox \advance\dimen2 by\dp\strutbox%
    \ifx\baselinestretch\undefined\relax%
    \else%
       \dimen0=100sp \dimen0=\baselinestretch\dimen0%
       \dimen2=100\dimen2 \divide\dimen2 by\dimen0%
    \fi%
    \let\tpos\tb@vcenter
    \tb@initYoung
    \tb@options#1\eoo
    \let\arrow\tb@arrow%
    \dimen0=\Tscale\dimen2%
    \dimen1=\dimen0 \advance\dimen1 by \tb@fframe%
    \lineskip=0pt\baselineskip=0pt
%
    \def\tb@nothing{}%
    \def\endcellno{$\rss\egroup\bss\egroup}
    \def\endcell{\endcellno\kern-\dimen0}
    \def\begincell{\vbox to\dimen0\bgroup\vss\hbox to\dimen0\bgroup\hss$}%
    \let\overlay\tb@overlay%
    \let\fl\tb@fl%
    \let\lss\hss\let\rss\hss\let\tss\vss\let\bss\vss
    \def\mkcell##1{
        \let\tbcellF\tb@cellD
        \def\tb@cellarg{##1}
        \ifx\tb@cellarg\tb@nothing\let\tb@cellarg\tb@cellE\fi%
        \begincell\tb@cellarg\endcellno
        \tbcellF}
    \let\savecellF\tbcellF
     \Tif{\catcode`,=4\catcode`|=\active}{}\tb@tableauD}%

\let\tb@savetableauD\tableauD
{
    \catcode`|=\active \catcode`*=\active \catcode`~=\active%
    \catcode`@=\active
\gdef\tableauD#1{%
  \Tif{
    \mathcode`|="8000 \mathcode`*="8000%
    \mathcode`~="8000 \mathcode`@="8000%
    \def@{\bullet}%
    \let|\cr
    \let*\tf
    \let~\sk
  }{}%
  \tpos{\tabskip=0pt\halign{&\mkcell{##}\cr#1\crcr}}%
  \global\let\tbcellF\savecellF
  \egroup
  \egroup}
}
\let\tb@tableauD\tableauD
\let\tableauD\tb@savetableauD
\let\tb@savetableauD\undefined


\def\tb@options#1{\ifx#1\eoo\relax\else\tb@option#1\expandafter\tb@options\fi}

\def\tb@option#1{%
  \if#1t\let\tpos\tb@vtop\fi
  \if#1c\let\tpos\tb@vcenter\fi
  \if#1b\let\tpos\vbox\fi
  \if#1F\tb@initFerrers\fi
  \if#1Y\tb@initYoung\fi
  \if#1s\tb@initSmall\fi
  \if#1m\tb@initMedium\fi
  \if#1l\tb@initLarge\fi
  \if#1p\tb@initPartition\fi
  \if#1a\tb@initArrow\fi
}

\def\tb@vcenter#1{\ifmmode\vcenter{#1}\else$\vcenter{#1}$\fi}

\def\tb@vtop#1{\hbox{\raise\ht\strutbox\hbox{\lower\dimen0\vtop{#1}}}}

\def\tb@initPartition{\def\Tscale{.3}}
\def\tb@initSmall{\def\Tscale{1}}
\def\tb@initMedium{\def\Tscale{2}}
\def\tb@initLarge{\def\Tscale{3}}

\def\tb@initArrow{\dimen2=1.25em}

\def\tb@initYoung{%
  \def\tb@cellE{}
  \let\tb@cellD\tb@cellN
  \def\sk{\global\let\tbcellF\tb@cellNF}}
\def\tb@initFerrers{%
  \def\tb@cellE{\bullet}
  \let\tb@cellD\tb@cellNF
  \def\sk{\bullet}}

\tb@initMedium

\def\tb@sframe#1{%
  \vbox to0pt{
    \vss
    \hbox to0pt{%
      \hss
      \vbox to\dimen1{
        \hrule depth #1 height0pt
        \vss
        \hbox to\dimen1{
          \vrule width #1 height\dimen1
          \hss
          \vrule width #1
          }%
        \vss
        \hrule height #1 depth 0in
        }%
      \kern-\tb@hframe
      }%
    \kern-\tb@hframe}}

\def\tb@hframe{.2pt}\def\tb@fframe{.4pt}\def\tb@bframe{2pt}
\def\tb@cellH{\tb@sframe{\tb@bframe}}       
\def\tb@cellNF{}                            
\def\tb@cellN{\tb@sframe{\tb@fframe}}       
\let\tbcellF\tb@cellN                       

\def\tb@rpad{1pt}
\def\tb@lpad{1pt}
\def\tb@tpad{1.8pt}
\def\tb@bpad{1.8pt}

\def\tb@overlay{\endcell\@ifnextchar[{\tb@overlaya}{\begincell}}
\def\tb@overlaya[#1]{\vbox to\dimen0\bgroup%
  \tb@overlayoptions#1\eoo%
  \tss\hbox to\dimen0\bgroup\lss$}
\def\tb@overlayoptions#1{\ifx#1\eoo\relax\else\tb@overlayoption#1\expandafter\tb@overlayoptions\fi}

\def\tb@overlayoption#1{
  \if#1t\def\tss{\vskip\tb@tpad}\let\bss\vss\fi
  \if#1c\let\tss\vss\let\bss\vss\fi
  \if#1b\def\bss{\vskip\tb@bpad}\let\tss\vss\fi
  \if#1l\def\lss{\hskip\tb@lpad}\let\rss\hss\fi
  \if#1m\let\lss\hss\let\rss\hss\fi
  \if#1r\def\rss{\hskip\tb@rpad}\let\lss\hss\fi
}

\def\tb@fl{\endcell\begincell\vrule depth 0pt width \dimen0 height \dimen0 \endcell\begincell}



\@ifundefined{diagram}{}{
\def\tb@arrowpad{.5}

\newoptcommand{\tb@arrow}{\@ne}[2]{%
  \endcell
   \begingroup%
   \let\dg@getnodesize\tb@getnodesize
   \dg@USERSIZE=#1\relax%
   \ifnum\dg@USERSIZE<\@ne \dg@USERSIZE=\@ne \fi%
   \dg@parse{#2}%
   \dg@label{\tb@draw{#1}{#2}}}

\def\tb@getnodesize#1#2#3#4#5{\dimen3=\tb@arrowpad\dimen2 #4=\dimen3 #5=\dimen3\relax}
\def\tb@getnodesize#1#2#3#4#5{\ifnum#2=0\ifnum#3=0\tb@getnodesizetail{#4}{#5}\else\tb@getnodesizehead{#4}{#5}\fi\else\tb@getnodesizehead{#4}{#5}\fi}
\def\tb@getnodesizetail#1#2{\dimen3=.5\dimen2 #1=\dimen3 #2=\dimen3}
\def\tb@getnodesizehead#1#2{\dimen3=.5\dimen2 #1=\dimen3 #2=\dimen3}

\def\tb@draw#1#2#3#4{%
        \dg@X=0\dg@Y=0\dg@XGRID=1\dg@YGRID=1\unitlength=.001\dimen0%
        \dg@LBLOFF=\dgLABELOFFSET \divide\dg@LBLOFF\unitlength%
        \dg@drawcalc
        \begincell
        \let\lams@arrow\tb@lams@arrow
        \begin{picture}(0,0)\begingroup\dg@draw{#1}{#2}{#3}{#4}\end{picture}%
        \endcell
        \endgroup
        \begincell}
}

%
%
%
\def\tb@lams@arrow#1#2{%
 \lams@firstx\z@\lams@firsty\z@
 \lams@lastx#1\relax\lams@lasty#2\relax
 \lams@center\z@
 %
 \N@false\E@false\H@false\V@false
 \ifdim\lams@lastx>\z@\E@true\fi
 \ifdim\lams@lastx=\z@\V@true\fi
 \ifdim\lams@lasty>\z@\N@true\fi
 \ifdim\lams@lasty=\z@\H@true\fi
 \NESW@false
 \ifN@\ifE@\NESW@true\fi\else\ifE@\else\NESW@true\fi\fi
 %
 \ifH@\else\ifV@\else
  \lams@slope
  \ifnum\lams@tani>\lams@tanii
   \lams@ht\ten@\p@\lams@wd\ten@\p@
   \multiply\lams@wd\lams@tanii\divide\lams@wd\lams@tani
  \else
   \lams@wd\ten@\p@\lams@ht\ten@\p@
   \divide\lams@ht\lams@tanii\multiply\lams@ht\lams@tani
  \fi
 \fi\fi
 %
 \ifH@  \lams@harrow
 \else\ifV@ \lams@varrow
 \else \lams@darrow
 \fi\fi
}

\catcode`\@=\savecatcodeat
\let\savecatcodeat\undefined

\begin{document}

\title{Crystal approach to affine Schubert calculus}

\author{Jennifer 
Morse\footnote{supported by NSF grants DMS--1001898, DMS--1301695 and a Simons Fellowship.}
\; \; and \;
Anne Schilling\footnote{supported by NSF grants DMS--1001256, OCI--1147247 and a Simons Fellowship.}
}

%
%
%


\maketitle

\begin{abstract}
We apply crystal theory to affine Schubert calculus, Gromov-Witten
invariants for the complete flag manifold, and the positroid stratification 
of the positive Grassmannian.  We introduce 
operators on decompositions of elements in the type-$A$ affine Weyl group and 
produce a crystal reflecting the internal structure of the generalized Young modules 
whose Frobenius image is represented by stable Schubert polynomials.
We apply the crystal framework to products of a Schur function with a $k$-Schur 
function, consequently proving that a subclass of 3-point Gromov--Witten 
invariants of complete flag varieties for $\mathbb C^n$ enumerate 
the highest weight elements under these operators.
Included in this class are the Schubert structure constants in the
(quantum) product of a Schubert polynomial with a Schur function
$s_\lambda$ for all $|\lambda^\vee|< n$.  Another by-product gives
a highest weight formulation for various fusion coefficients of the 
Verlinde algebra and for the Schubert decomposition of certain
positroid classes. 
\end{abstract}

\section{Introduction}

\subsection{Background}
The theory of crystal bases was introduced by Kashiwara \cite{Kash:1990,Kash:1991}
in an investigation of quantized enveloping algebras $U_q(\mathfrak g)$
associated to a symmetrizable Kac--Moody Lie algebra $\mathfrak g$.
Integrable modules for quantum groups
play a central role in two-dimensional solvable lattice models.
When the absolute temperature is zero ($q=0$), there is a distinguished
{\it crystal basis} with many striking features.  
The most remarkable is that the internal structure of an integrable 
representation can be combinatorially realized by associating the 
basis to a colored oriented graph whose arrows are imposed by the 
Kashiwara (modified root) operators.
From the crystal graph, characters can be computed by enumerating elements
with a given weight and the tensor product decomposition
into irreducible submodules is encoded by
the disjoint union of connected components.  Hence, progress in
the field comes from having a natural combinatorial realization
of crystal graphs.

Schubert calculus is a theory whose development also hinges on combinatorial 
methods, but its origin is in geometry.  Initially motivated 
to determine the number of linear spaces of given dimension
satisfying certain geometric conditions,
the theory has grown to one that can address highly non-trivial curve
counting including the calculation of Gromov--Witten invariants.
The approach converts problems into computations with
representatives for Schubert classes in the (quantum)
cohomology ring of a flag variety.  Hence, the basic problem is 
one of producing and working with explicit representatives.

Crystal theory and Schubert calculus convene naturally 
in a foundational example.
From the geometric perspective, the problem is to count certain linear 
subspaces in projective space.  Developments in algebraic geometry and topology 
convert the problem into the computation of intersection numbers
of certain subvarieties in the Grassmannian $\Gr(a,n)$, 
which in turn are encoded 
by the structure constants of Schubert classes 
$\{\sigma_\lambda\}_{\lambda\subset \mathrm{(a^{n-a})}}$
for the cohomology ring $H^*(\Gr(a,n))$.
The computation is made concrete with a homomorphism $\psi$ 
from the ring $\Lambda$ of symmetric functions onto $H^*(\Gr(a,n))$.
In particular,
$$
\psi(s_\lambda) =\begin{cases} \sigma_\lambda & \text{if $\lambda\subset
(a^{n-a})$}\,,\\
0 & \text{otherwise}\,,
\end{cases}
$$
where $s_\lambda$ is a Schur function.
Intersection numbers thus sit as coefficients in 
the Schur expansion of a product of two Schur functions.

The related representation theoretic example is
$\mathfrak g = \mathfrak {gl}_n$.  The
heart of crystal theory realizes tensor multiplicities 
as highest weights (connected components) of a graph.
In this case, the crystal is the graph whose vertices are Young tableaux and 
whose edges are imposed by coplactic operators introduced by
Lascoux and Sch\"utzenberger~\cite{LS:1978,LS:1979}.
The number of connected components is the
multiplicity $c_{\lambda,\mu}^\nu$ of the irreducible highest weight
module $V_\nu$ in $V_\lambda\otimes V_\mu$:
$$
	V_\lambda\otimes V_\mu = \bigoplus_\nu c_{\lambda,\mu}^\nu V_\nu\,.
$$
A look back to the early 1900s discovery that the Frobenius image 
of $V_\lambda$ is the Schur function $s_\lambda$ shows that the 
Grassmannian intersection numbers are $c_{\lambda,\mu}^\nu$ as well.
An explicit rule to compute $c_{\lambda,\mu}^\nu$ by counting
a subclass of Young tableaux was formulated by Littlewood and Richardson 
in 1934~\cite{LR:1934}, but the first proof only 
arrived 40 years later with Sch\"utzenberger~\cite{Schuetz:1977}.

This example provides a template within which the representation theory
of different modules and the geometry of other varieties may be investigated.  
Even incremental variations inspire highly intricate combinatorics 
and leave unanswered questions.  For example, Schubert calculus 
of the flag manifold $\Fl_n$ 
is highly developed --- from a construction of Schubert classes $\{\sigma_w\}$ 
indexed by elements in the symmetric group $S_n$
by Bernstein-Gelfand-Gelfand~\cite{BGG:1973} and Demazure~\cite{Dem:1974}, 
to the explicit identification of the classes with polynomial 
representatives introduced by Lascoux and Sch\"utzenberger~\cite{LS:1982}.  
Nevertheless, an LR rule for the constants in 
\begin{equation}
\label{schubcon}
	\sigma_u \cup \sigma_w = \sum_{v\in S_n} c_{u,w}^v\,\sigma_{v}
\end{equation}
has yet to be discovered.
Related efforts are summarized in Section~\ref{sec:related}
and confirm that it is not for lack of trying.

The main thrust of this article is to introduce crystals into a
generalization of Schubert calculus centered around the {affine Grassmannian} 
$\Gr = G(\mathbb C((t)))/G(\mathbb C[[t]])$ for $G=SL(n,\mathbb C)$,
where $\mathbb C[[t]]$ is the ring of formal power series
and $\mathbb C((t)) = C[[t]][t^{-1}]$ is the ring of formal Laurent series.  
Quillen (unpublished) and Garland and Raghunathan~\cite{GR:1975} showed 
that $\Gr$ is homotopy-equivalent to the group $\Omega SU(n,\mathbb C)$
of based loops into $SU(n, \mathbb C)$.  Consequently, its homology and 
cohomology acquire algebra structures.  In particular,
it follows from Bott~\cite{Bott:1958} that $H_*(\Gr)$ 
and $H^*(\Gr)$ can be identified
with a subring $\Lambda_{(n)}$ and a  quotient $\Lambda^{(n)}$
of the ring $\Lambda$ of symmetric functions.
On one hand, using the algebraic nil-Hecke ring construction,
Kostant and Kumar~\cite{KK:1986} studied Schubert bases of $H^*(\Gr)$ 
and Peterson~\cite{Peterson:1997} studied Schubert bases of $H_*(\Gr)$,
$$
\{\xi^w\in H^*(\Gr,\mathbb Z) \mid w\in \tilde S_n^0\}\;\; \,\text{and}\,\;\;
\{\xi_w\in H_*(\Gr,\mathbb Z) \mid w\in \tilde S_n^0\} 
\,.
$$
These are indexed by the subset of the affine symmetric group 
$\tilde{S}_n = \langle s_0, s_1, \ldots, s_{n-1} \rangle$ 
consisting of affine Grassmannian elements ---
representatives of minimal length in cosets of $\tilde{S}_n/S_n$.

On the other hand, a distinguished basis for $\Lambda_{(n)}$
comprised of elements called {\it $k$-Schur functions},
$\{ s_w^{(k)} \mid w\in \tilde S_n^0\}$,
came out of a study \cite{LLM:2003} of Macdonald polynomials
($k=n-1$).  
It was shown in \cite{LM:2008} that
the (affine-LR) coefficients in the products 
\begin{equation}
\label{klr}
s_u^{(k)}\,s_w^{(k)} = \sum_{v\in \tS} c_{u,w}^{v,k}
\, s_v^{(k)}
\end{equation}
contain all structure constants (Gromov-Witten invariants)
for a quantum deformation of the cohomology 
of the Grassmannian (e.g. \cite{Kont,Witt}).  
A basis (of {\it dual $k$-Schur functions})
for $\Lambda^{(n)}$ was also introduced in \cite{LM:2008} and
therein generalized to a family $\{\mathcal F_{vw^{-1}}\}$ 
that alternately encodes the constants by
\begin{equation}
\label{dualks}
\mathcal F_{vw^{-1}}
=\sum_{u\in\tS} c_{u,w}^{v,k}\,\mathcal F_{u}
\,.
\end{equation}
The two approaches converged when Lam proved~\cite{Lam:2008}
that the $k$-Schur basis is a set of representatives for the Schubert 
classes of $H_*(\Gr)$ and the Schubert structure constants in 
homology exactly match the affine-LR coefficients (see also~\cite{LLMSSZ:2014}
for more details).

\subsection{Crystals and the affine Grassmannian}

In this article, we produce a crystal in the affine framework
that has applications to affine-LR coefficients and to several 
other families of elusive constants. 
The $k$-Schur functions can be characterized using decreasing factorizations
of elements in the type-$A$ affine Weyl group.  We introduce a set of 
operators (see Section~\ref{subsection.definition}) that act on a subclass of
these factorizations and prove that the resulting graph is a 
$U_q(A_{\ell-1})$-crystal using the Stembridge local axioms~\cite{St:2003} 
(see Theorem~\ref{theorem.crystal}). 
At a basic level, we find that the crystals support generalized Young--Specht 
modules of $S_n$ associated to permutation diagrams.  
These are the modules whose Frobenius images are stable Schubert polynomials 
$F_w$ (also known as Stanley symmetric functions).  In Theorem~\ref{theorem.intertwine}, 
we show that the Edelman--Greene  decomposition into irreducible characters
intertwines with our crystal operators.

We prove that the enumeration of highest weight elements 
in the crystal are $k$-Schur coefficients in the product of a Schur 
and a $k$-Schur function.
These are in fact affine LR-coefficients \eqref{klr} as it is
known that there is an element $w\in\tS$ where
$s_\mu=s^{(k)}_w$ anytime $\mu\subset (r^{n-r})$, for $1\leq r<n$.
A translation operator (defined in \eqref{Ri}) enables us to 
generalize the framework within which we can apply the crystal.
We include two proofs detailing this application;
one using crystal theory and another that extends the
Remmel--Shimozono involution on tableaux~\cite{RS:1998}.  

\medskip

\noindent
{\bf (Theorem~\ref{theorem.main})}.  
{\it Let $v,w\in \tS$ and $\mu\subset (r^{n-r})$ for some $1\leq r<n$.
If $\ell(v)-\ell(w) \neq |\mu|$, then $c_{Rw_\mu,w}^{Rv,k}=0$. Otherwise,
if $vw^{-1}\in S_{\hat x}$ or $\ell(\mu)=2$,
\[
	c_{Rw_\mu,w}^{Rv,k}=\#\text{ of highest weight factorizations of $vw^{-1}$ of weight $\mu$},
\]
where $S_{\hat x}$ denotes a finite subgroup of $\tilde S_n$ generated by
a strict subset of $\{s_0,\ldots,s_{n-1}\}$ and $R$ is a product of $k$-rectangle translation operators.
}

The crystal also connects to several families of intensely studied constants that
arise as a subset of affine LR-coefficients.  We discussed the
genus 0, 3-point Gromov--Witten invariants of Grassmannians.  In fact,
our results apply more generally to the complete flag manifold.
Quantum cohomology was defined for any K\"ahler algebraic manifold $X$.  
When $X=\Fl_n$, as a linear space, $\QH^*(\Fl_n) = H^*(\Fl_n)\otimes 
\mathbb Z[q_1,\ldots,q_{n-1}]$ for parameters $q_1,\ldots,q_{n-1}$. 
However, the multiplicative structure is defined by (where $w_0$ is the longest element in $S_n$)
\begin{equation}
	\sigma_u *_q \sigma_w = 
	\sum_v \sum_{\d} \langle u,w, v\rangle_{\d} \,q^{\d}\,
	\sigma_{w_0 v}\;,
\end{equation}
where the structure constants are the 3-point Gromov--Witten invariants
of genus 0, constants which count equivalence classes of certain rational curves 
in $\Fl_n$. Peterson asserted that $\QH^*(G/P)$ of a flag variety is
a quotient of the homology $H_*(\Gr_G)$ of the affine Grassmannian 
up to localization (details carried out in \cite{LS:2010}).
Consequently, $\langle u, w, v\rangle_{\d}$ arise as coefficients in \eqref{klr}
and in particular, when $\d=0$, these include the Schubert structure constants 
of \eqref{schubcon}.

\medskip

\noindent
{\bf (Theorem~\ref{the:GW})}
{\it
For any $\d\in\mathbb N^{n-1}$ and $u,w,v\in S_n$ 
where $u$ is Grassmannian with descent at position $r$, 
if $(Rv)w^{-1}\in S_{\hat x}$ for some $x\in[n]$, then
$$
\langle u,w,v\rangle_{\d}=\# \;\text{of
highest weight factorizations 
of $(Rv)w^{-1}$ of weight $\mu$,}
$$
where $R$ is a translation defined in Theorem~\ref{the:GW}.
}

\medskip

Subclasses of the affine LR-coefficients also include all Schubert
structure constants \eqref{schubcon} and all constants for the
Verlinde (fusion) algebra of the Wess--Zumino--Witten 
model associated to $\widehat{su}(\ell)$ at level $n-\ell$.  
That is, it was shown in \cite{LM:2008} that affine-LR coefficients 
contain 
the fusion coefficients $\mathcal N_{\lambda, \mu}^{\nu}$,  defined for
$\lambda,\mu,\nu\subseteq (n-\ell)^{\ell-1}$ by
\begin{equation}
\label{equation.fusion}
L(\lambda)\otimes_{n-\ell} L(\mu) = 
\bigoplus_\nu
\mathcal N_{\lambda, \mu}^{\nu}
L(\nu)
\,,
\end{equation}
where the fusion product $\otimes_{n-\ell}$ is the reduction of the tensor
product of integrable representations with highest weight $\lambda$ and $\mu$
via the representation at level $n-\ell$ of $\widehat{su}(\ell)$.

A family of affine Stanley symmetric functions, 
indexed by affine elements $\widetilde w\in\tilde S_n$, was
introduced in \cite{Lam:2006}.  It was shown that these functions 
$\{F_{\tilde w}\}_{\tilde w\in\tilde S_n}$ reduce to dual $k$-Schur functions 
indexed by affine Grassmannian elements, 
to the stable Schubert polynomials when $\tilde w$ is a finite element of
$S_n$, and to cylindric Schur functions of Postnikov \cite{Posttoric:2005}
when $\tilde w$ has no braid relation.
We discovered that there is a correspondence between the set $\tilde S_n$ of 
affine elements and certain skew shapes $\nu/\lambda$ (see Definition~\ref{skewdualgrass})
and in fact prove that any affine Stanley $F_{\widetilde w}$ is 
a skew dual $k$-Schur function $\mathcal F_{w_\nu w_\lambda^{-1}}$.

This in hand, we connect our results to a finer subdivision 
of the Grassmannian than the usual Bruhat decomposition
called the {\it positroid stratification}~\cite{Post:2005}.
Its complexification was proven \cite{KLS:2013} to coincide 
with the projection of the Richardson decomposition of the 
flag manifold \cite{Lus:1998,Rie:2006,BGY:2006}
and it was shown that each cohomology class is the image under $\psi$ of
an affine Stanley symmetric function.  Because cylindric Schur functions 
give access to Gromov--Witten invariants for the Grassmannian and
they are contained in the set $\{F_{\tilde w}\}$, it is
implied in \cite{KLS:2013} that a subset of the positroid varieties
relates to quantum cohomology of Grassmannians.  We extend their result, 
proving that Gromov--Witten invariants for the complete flag 
manifold arise in the Schubert decomposition of the cohomology 
class of any positroid variety.  We then apply the crystal on 
affine factorizations to this study.

\subsection{Outline}

Basic notation is reviewed in Section~\ref{section.notation}
regarding the affine Weyl group, crystals, and the Stembridge local axioms~\cite{St:2003}.  
The crystal operators on affine factorizations for certain $\widetilde w\in \tilde S_n$
are introduced in Section~\ref{section.crystal operators}.
We discuss their properties and the theorem that the resulting 
graph $B(\widetilde w)$ is a crystal in the category of integrable highest weight crystals 
for type $A$.  A proof that the Stembridge axioms hold is 
relegated to Appendix~\ref{appendix.stembridge}.  Section~\ref{section.specht} shows that $B(\widetilde w)$ supports
the generalized Young--Specht module of a permutation diagram associated to $\widetilde w$.
In Section~\ref{section.gromov witten}, we 
connect affine Stanley symmetric functions to dual $k$-Schur functions and
use the crystal to describe various $k$-Schur 
structure constants in the product of a Schur function 
and a $k$-Schur function.  The subsections relate these coefficients
to families of constants that arise from quantum flag varieties, WZW fusion, 
Schur times Schubert polynomials, and positroid varieties.  
In Section~\ref{section.involution}, we produce a sign-reversing involution on $B(\widetilde{w})$
that refines the Remmel and Shimozono~\cite{RS:1998} proof of the 
classical LR-rule.  A second proof of Theorem~\ref{theorem.main} 
arises consequently.

\subsection{Related work}
\label{sec:related}
Manifestly positive combinatorial formulas for structure coefficients for the Verlinde fusion algebra,
or the quantum cohomology of the Grassmannian, and for the full flag have been actively sought for
some time now.  In the fusion case, Tudose~\cite{Tudose:2002} gave a 
combinatorial interpretation when $\mu$ has at most two columns in her thesis. 
For $n=2,3$, positive formulas are known~\cite{BMW:1992,BSVW:2014} as well as when 
$\lambda$ and $\mu$ are rectangles~\cite{SS:2001}. Korff and Stroppel~\cite{KorffStroppel:2010} 
give a formula using the plactic algebra, however their formula involves signs. Subsequently,
Korff~\cite{Korff:2011,Korff:2013} gave a new algorithm for the calculation of the fusion coefficients using
relations to integrable models. In~\cite{LM:2008}, it was shown that the fusion and three point 
Gromov--Witten invariants for Grassmannians form a special case of the $k$-Schur function 
structure coefficients. The fusion case of Theorem~\ref{theorem.main} was treated in~\cite{MS:2012}.

Knutson formulated a conjecture for the quantum Grassmannian Littlewood--Richardson coefficients in terms of 
puzzles~\cite{KnutsonTao:2003} as presented in~\cite{BKT:2003}.  Coskun~\cite{Coskun:2009} gave 
a positive geometric rule to compute the structure constants of the cohomology ring of two-step flag varieties
in terms of Mondrian tableaux.  A proof of the puzzle conjecture was recently given by Buch et al.~\cite{BKPT:2014}.
In the flag case, Fomin, Gelfand and Postnikov~\cite{FGP:1997} computed the quantum Monk rule which
was extended in~\cite{Post:1999} to the quantum Pieri rule. Berg, Saliola and Serrano~\cite{BSS:2013}
computed the Littlewood--Richardson coefficients for $k$-Schur functions for the case which is equivalent
to the quantum Monk rule. Denton~\cite{Denton:2012} proved a special $k$-Littlewood--Richardson rule
when there is a single term without multiplicity.

The Schur times (quantum) Schubert polynomial coefficients fall within the realm of Theorem~\ref{the:GW}
and have received much attention during the last years. Lenart~\cite{Lenart:2004,Lenart:2010} used growth diagrams
and plactic relations to approach this problem. Benedetti and Bergeron~\cite{BB:2012, BB:2013} relate the Schur times Schubert
problem to $k$-Schur function structure coefficients using strong order. The Schur times quantum Schubert coefficients
are addressed by M\'esz\'aros, Panova and Postnikov~\cite{MPP:2012} using the Fomin--Kirillov algebra in the hook
and two-row case. As we will see in Section~\ref{section.gromov witten} this is the opposite extreme from the 
cases treated in Theorem~\ref{the:GW}.

\subsection*{Acknowledgments}
We would like to thank the ICERM program ``Automorphic Forms, Combinatorial Representation Theory and 
Multiple Dirichlet Series" during the spring 2013 and IHES in Orsay for hospitality, where part of this work was done.
Both authors would like to thank the Simons Foundation for sabbatical support.

Many thanks to Avi Dalal, Nate Gallup and Mike Zabrocki for their help 
with the implementation of weak tableaux and $k$-charge during Sage Days 
49 in Paris. This work benefitted from  computations
with {\sc Sage}~\cite{sage,sage-combinat}. Finally, we would like to thank 
Sara Billey, Dan Bump, Patrick Clarke, Adriano Garsia, Thomas Lam, Luc Lapointe, 
Mark Shimozono, and Josh Swanson for enlightening discussions.

\section{Preliminaries}
\label{section.notation}

Here we review background on affine permutations and crystals 
that will be used throughout the article.  Otherwise, definitions
and notation will be introduced as needed.  In particular, 
our convention for partitions and tableaux is summarized
in Section~\ref{section.specht}.

\subsection{Extended affine symmetric group}
\label{subsection.affine permutations}

Fix $n\in \mathbb{Z}_{> 0}$.  
The {\it affine symmetric group} $\tilde S_n$ is the Coxeter
group generated by $\langle s_0, s_1,\ldots, s_{n-1}\rangle$ satisfying 
the relations
\begin{equation}
\label{coxeter}
\begin{aligned}
s_i^2 &= 1 && \text{for all $i$,} \\
s_is_{i+1}s_i &= s_{i+1}s_is_{i+1} && \text{for all $i$,}\\
s_is_j &= s_js_i && \text{if}\; |i-j|> 1\,,
\end{aligned}
\end{equation}
where indices are taken modulo $n$ (we will work mod $n$ without further comment).
The subgroup generated by $s_1,\ldots,s_{n-1}$ is isomorphic to the symmetric 
group $S_n$.  A word $i_1 i_2\cdots i_m$ in the alphabet $[n]=\{0,1,\ldots,n-1\}$ 
corresponds to the affine permutation $w=s_{i_1} \cdots s_{i_m} \in \tilde S_{n}$. 
The length $\ell(w)$ of $w\in\tilde S_n$ is defined by the length of 
its shortest word.  Any word of this length is said to be {\it reduced}.

There is a concrete realization of $\tilde S_n$ as the
affine Weyl group $\tilde A_{n-1}$~\cite{Lus:1983}.  Affine permutations
are bijections $w$ from $\mathbb Z\to\mathbb Z$ where $w(i+n)=w(i)+n$ 
for all $i$ and where 
\begin{equation}
\label{sum0}
\sum_{i=1}^n(w(i)-i)=0\,.
\end{equation}
Since an affine permutation $w$ is determined by its tuple of values
$[ w(1), w(2), \ldots, w(n)]$, we often use only this
{\it window} to represent it.
The length of $w$ can be determined by counting the appropriate notion of
inversions.  In particular, the {\it left inversion vector}
$\linv(w)=(\alpha_1,\ldots,\alpha_n)$ 
is the composition where $\alpha_i$ records the number of positions 
$-\infty<j<i$ such that $w(j)>w(i)$.  
It was proven~\cite{Shi:1986,BB:1996} that
\begin{equation}
\label{equation.length}
\ell(w)=|\linv(w)|\,.
\end{equation}

We shall also have the need to work in a larger setting with
{\it extended} affine permutations, bijections as before
but without requiring condition \eqref{sum0}.  The set of 
these elements forms the {\it extended affine symmetric group}
which can be realized by adding a generator
$\tau$ to $\tilde S_n$ where $\tau(i)=(i+1)$.  It is subject to 
the relation $\tau s_i = s_{i+1}\tau$.  For any extended affine 
permutation $w$, there is a unique non-negative integer $r$ where 
$w=\tau^r v$ and $v\in\tilde S_n$.  Note then that
\begin{equation}
\label{remark.shift}
v= [w(1)-r, w(2)-r,\ldots,w(n)-r]\,.
\end{equation}
The extended affine symmetric group contains $\tilde S_n$ 
as a normal subgroup.  Its coset decomposition is 
given by subsets $\tilde S_{n,r}$ made up of 
elements $w$ with the property that $\sum_{i=1}^n (w(i)-i)=rn$.  

The set $\tilde S_n^0$ of \textit{affine Grassmannian elements} is
the set of minimal length coset representatives of $\tilde S_n/S_n$.
Representatives are given by those $w\in \tilde S_n$ 
for which all reduced words end in $0$.
An element $w$ is affine Grassmannian if and only if its window
is increasing.  We shall also define an extended
affine permutation $w$ to be affine Grassmannian when 
$w(1) < w(2) < \cdots < w(n)$.

\subsection{Kashiwara crystals and Stembridge local axioms}
\label{subsection.stembridge}

Kashiwara~\cite{Kash:1990,Kash:1994} introduced a {\it crystal}
as an edge-colored directed graph satisfying a simple set of axioms.
Let $\mathfrak{g}$ be a symmetrizable Kac--Moody algebra with associated root, 
coroot and weight lattices $Q,Q^\vee,P$. Let $I$ be the index set of the Dynkin 
diagram and denote the simple roots, simple coroots and fundamental weights 
by $\alpha_i$, $\alpha^\vee_i$ and $\Lambda_i$ ($i\in I$), respectively.
There is a natural pairing $\langle \cdot,\cdot \rangle \colon Q^\vee\otimes P \rightarrow
\mathbb{Z}$ defined by $\langle \alpha_i^\vee, \Lambda_j\rangle=\delta_{ij}$.

\begin{definition}
\label{definition.crystal}
An {\it abstract $U_q(\mathfrak{g})$-crystal} is a nonempty set $B$ together with maps
\begin{equation*}
\begin{split}
	\wt &\colon B \to P\\
	\et_i, \ft_i &\colon B \to B \cup \{\bf{0}\} \qquad \text{for all $i\in I$}
\end{split}
\end{equation*}
satisfying
\begin{enumerate}
\item $\ft_i(b)=b'$ is equivalent to $\et_i(b')=b$ for $b,b'\in B$, $i\in I$.
\item For $i\in I$ and $b\in B$
\begin{equation*}
\begin{split}
	&\wt(\et_i b) = \wt(b) +\alpha_i \qquad \text{if $\et_ib \in B$},\\
	&\wt(\ft_i b) = \wt(b) -\alpha_i \qquad \text{if $\ft_ib \in B$}.
\end{split}
\end{equation*}
\item For all $i\in I$ and $b\in B$, we have
$
	\vp_i(b) = \ve_i(b) + \langle \alpha_i^\vee, \wt(b)\rangle,
$
where
\begin{equation}
\label{equation.ve vp}
\begin{split}
	\ve_i(b) &= \max\{d \ge 0 \mid \et_i^d(b) \neq \bf{0}\},\\
	\vp_i(b) &= \max\{d \ge 0 \mid \ft_i^d(b) \neq \bf{0}\}.
\end{split}
\end{equation}
\end{enumerate}
\end{definition}

\begin{remark}
Although the above axioms are sometimes used to define only semi-normal crystals,
this suffices here since we consider crystals coming from $U_q(\mathfrak{g})$-representations, 
all of which are semi-normal. 
\end{remark}

\begin{remark}
The axioms of Definition~\ref{definition.crystal} define an edge-colored directed graph with vertex set $B$
by drawing an edge $b \stackrel{i}{\rightarrow} b'$ when $\ft_i(b) = b'$.
\end{remark}

Abstract crystals do not necessarily correspond to crystals coming from 
$U_q(\mathfrak{g})$-representations. Stembridge~\cite{St:2003} provided a simple set of local
axioms that uniquely characterize the crystals corresponding 
to representations of simply-laced algebras. We briefly review
his axioms here.

Let $A=[a_{ij}]_{i,j\in I}$ be the Cartan matrix of a simply-laced 
Kac--Moody algebra $\mathfrak{g}$ (off-diagonal entries are either 0 or -1).
In this paper we mainly consider the Cartan matrix of type $A_{\ell-1}$. 
An edge-colored graph $X$ is called $A$-regular if
it satisfies the following conditions (P1)-(P6), (P5'), and (P6'):
\begin{enumerate}
\item[(P1)] 
All monochromatic directed paths in $X$ have finite length. In particular $X$ has 
no monochromatic circuits.
\item[(P2)] 
For every $i\in I$ and every vertex $x$, there is at most one edge 
$y \stackrel{i}{\longrightarrow} x$ and at most one edge 
$x\stackrel{i}{\longrightarrow} z$.
\end{enumerate}
We introduce the notation
\begin{equation*}
\Delta_i \ve_j(x)=\ve_j(x)-\ve_j(\et_i x), \qquad
\Delta_i \vp_j(x)=\vp_j(\et_i x)-\vp_j(x),
\end{equation*}
whenever $\et_i x$ is defined, and
\begin{equation*}
\nabla_i \ve_j(x)=\ve_j(\ft_i x)-\ve_j(x), \qquad
\nabla_i \vp_j(x)=\vp_j(x)-\vp_j(\ft_i x),
\end{equation*}
whenever $\ft_i x$ is defined, where $\ve_i$ and $\vp_i$ are defined
as in~\eqref{equation.ve vp}.

For fixed $x\in X$ and a distinct pair $i,j\in I$, 
assuming that $\et_i x$ is defined, require
\begin{enumerate}
\item[(P3)] $\Delta_i \ve_j(x) + \Delta_i \vp_j(x) = a_{ij}$, and
\item[(P4)] $\Delta_i \ve_j(x)\le 0$, $\Delta_i \vp_j(x)\le 0$.
\end{enumerate}
Note that for simply-laced algebras $a_{ij}\in \{0,-1\}$ for $i,j\in I$ distinct.
Hence (P3) and (P4) allow for only three possibilities:
\begin{equation*}
	(a_{ij},\Delta_i\ve_j(x),\Delta_i\vp_j(x))=(0,0,0),(-1,-1,0),(-1,0,-1).
\end{equation*} 

Assuming that $\et_ix$ and $\et_jx$ both exist, we require
\begin{enumerate}
\item[(P5)] $\Delta_i\ve_j(x)=0$ implies $y:=\et_i\et_jx=\et_j\et_ix$ and
$\nabla_j\vp_i(y)=0$.
\item[(P6)] $\Delta_i\ve_j(x)=\Delta_j\ve_i(x)=-1$ implies
$y:=\et_i \et_j^2 \et_i x= \et_j \et_i^2 \et_j x$ and $\nabla_i\vp_j(y)=\nabla_j\vp_i(y)=-1$.
\end{enumerate}

Dually, assuming that $\ft_ix$ and $\ft_jx$ both exist, we require
\begin{enumerate}
\item[(P5')] $\nabla_i\vp_j(x)=0$ implies $y:=\ft_i \ft_j x = \ft_j \ft_i x$ and
$\Delta_j\ve_i(y)=0$.
\item[(P6')] $\nabla_i\vp_j(x)=\nabla_j\vp_i(x)=-1$ implies
$y:=\ft_i \ft_j^2 \ft_i x= \ft_j \ft_i^2 \ft_j x$ and $\Delta_i\ve_j(y)=\Delta_j
\ve_i(y)=-1$.
\end{enumerate}

Stembridge proved~\cite[Proposition 1.4]{St:2003} that any two $A$-regular
posets $P,P'$ with maximal elements $x,x'$ are isomorphic if and only if
$\vp_i(x)=\vp_i(x')$ for all $i\in I$. 
Moreover, this isomorphism is unique.
Let $\lambda = \sum_{i\in I} \mu_i \Lambda_i$.
Denote by $B(\lambda)$ the unique $A$-regular poset with maximal element $b$ 
such that $\vp_i(b)=\mu_i$ for all $i\in I$.

\begin{theorem} \cite[Theorem 3.3]{St:2003} 
\label{thm:stembridge}
If $\mathfrak{g}$ is a simply-laced Kac--Moody Lie algebra with Cartan matrix $A$, 
then the crystal graph of the irreducible $U_q(\mathfrak{g})$-module 
of highest weight $\lambda$ is $B(\lambda)$.
\end{theorem}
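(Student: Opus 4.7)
The plan is to split the proof into two parts: uniqueness and existence. Uniqueness is given by Proposition 1.4 of Stembridge, stated immediately before this theorem: any two $A$-regular posets with maximal elements $x,x'$ are canonically isomorphic provided $\vp_i(x)=\vp_i(x')$ for all $i\in I$. Thus, once we know that $B(\lambda)$ exists, it is determined up to unique isomorphism by the data $\mu_i=\langle\alpha_i^\vee,\lambda\rangle$. Existence then reduces to exhibiting the Kashiwara crystal $\mathcal{B}(V(\lambda))$ of the irreducible highest weight $U_q(\mathfrak{g})$-module as an $A$-regular poset with maximal element $b_\lambda$ satisfying $\vp_i(b_\lambda)=\mu_i$. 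The latter is a standard feature of $\mathcal{B}(V(\lambda))$: the highest weight vector is annihilated by every $\et_i$, so by the formula in Definition~\ref{definition.crystal}(3) we have $\vp_i(b_\lambda)=\ve_i(b_\lambda)+\langle\alpha_i^\vee,\wt(b_\lambda)\rangle=\mu_i$.

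The bulk of the work is therefore the verification of axioms (P1)--(P6) and their duals on $\mathcal{B}(V(\lambda))$. Axioms (P1) and (P2) are immediate from the fact that the Kashiwara operators are partial functions and $V(\lambda)$ is integrable, so every monochromatic path has finite length. Axiom (P3) is a direct consequence of the defining identity $\vp_j(b)=\ve_j(b)+\langle\alpha_j^\vee,\wt(b)\rangle$ combined with the weight shift $\wt(\et_i b)=\wt(b)+\alpha_i$; subtracting gives $\Delta_i\vp_j(b)-\Delta_i\ve_j(b)=\langle\alpha_j^\vee,\alpha_i\rangle=a_{ij}$, and rearranging with (P4) produces (P3). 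Axiom (P4) asserts that $\et_i$ cannot simultaneously lengthen the $j$-string on both ends when $a_{ij}\le 0$; this is a short monotonicity argument inside the $j$-string through $b$.

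The main obstacle is axioms (P5), (P6) and their duals (P5'), (P6'), which encode the commutation and braid-like commutation of $\et_i$ and $\et_j$ on prescribed subsets of edges. The standard strategy is to reduce the claim for each pair $(i,j)$ to the rank-two subcrystal generated by $\{\et_i,\ft_i,\et_j,\ft_j\}$ acting on the connected component of $b$; here the assumption of simply-laced type is essential, since $a_{ij}\in\{0,-1\}$. When $a_{ij}=0$, the rank-two subalgebra is of type $A_1\times A_1$, the two operators commute outright, and (P5) follows with $y=\et_i\et_j x=\et_j\et_i x$. When $a_{ij}=-1$, one works inside finite-dimensional $A_2$-crystals, which are explicitly modeled by semistandard tableaux (or Lusztig parametrizations), and verifies the braid-like identity $\et_i\et_j^2\et_i x=\et_j\et_i^2\et_j x$ directly in that model; the hypothesis $\Delta_i\ve_j(x)=\Delta_j\ve_i(x)=-1$ cuts out precisely the edges on which the length-$4$ path closes. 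The dual axioms (P5'), (P6') follow by the same rank-two reduction applied to $\ft_i,\ft_j$, equivalently by the Lusztig involution interchanging highest and lowest weights. With $A$-regularity established and the statistics at $b_\lambda$ matched, Proposition 1.4 produces the canonical isomorphism $\mathcal{B}(V(\lambda))\cong B(\lambda)$.
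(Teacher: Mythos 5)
The paper does not prove this statement: it is quoted from Stembridge \cite{St:2003} as background, so there is no internal proof to compare your attempt against. Your sketch follows the route of Stembridge's original argument --- uniqueness from his Proposition 1.4, existence by verifying $A$-regularity of the Kashiwara crystal $\mathcal{B}(V(\lambda))$, with the difficult axioms (P5), (P6) and their duals checked after reduction to the rank-two subalgebras of types $A_1\times A_1$ and $A_2$ --- so the overall architecture is right.

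Two points deserve correction or emphasis. First, a sign slip in your treatment of (P3): with the paper's conventions $\Delta_i\ve_j(x)=\ve_j(x)-\ve_j(\et_i x)$ and $\Delta_i\vp_j(x)=\vp_j(\et_i x)-\vp_j(x)$, the identity $\vp_j(b)=\ve_j(b)+\langle\alpha_j^\vee,\wt(b)\rangle$ together with $\wt(\et_i x)=\wt(x)+\alpha_i$ gives $\Delta_i\ve_j(x)+\Delta_i\vp_j(x)=\langle\alpha_j^\vee,\alpha_i\rangle=a_{ji}=a_{ij}$ directly; the relation is a sum, not a difference, and (P3) follows formally without any appeal to (P4). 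Second, and more substantively, the reduction to rank two is not free: one needs to know that the crystal basis of an integrable $U_q(\mathfrak{g})$-module, restricted to the subalgebra generated by $e_i,f_i,e_j,f_j$, is the crystal basis of the restricted module. This compatibility of crystal bases with restriction to Levi subalgebras is a theorem of Kashiwara and is the essential nontrivial input to the existence half; likewise (P4) is not a purely formal monotonicity statement about $j$-strings but is itself established through this same rank-two reduction. With those two inputs made explicit, your outline is a faithful account of how the cited theorem is proved.
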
 

From now on we call crystals corresponding to $U_q(\mathfrak{g})$-modules simply $U_q(\mathfrak{g})$-crystals.
As Theorem~\ref{thm:stembridge} shows, for simply-laced types,
it can be checked whether a crystal is a $U_q(\mathfrak{g})$-crystal
by checking axioms (P1)-(P6').

An element $u\in B$ is called \textit{highest weight} if $\et_iu=\bf{0}$ for all $i\in I$.
A crystal $B$ is in the category of highest weight integrable crystals if for every $b\in B$, there exists a sequence
$i_1,\ldots,i_h\in I$ such that $\et_{i_1} \cdots \et_{i_h}b$ is highest weight.
One of the most important applications of crystal theory is that 
crystals are well-behaved with respect to taking tensor products.

\begin{theorem} \cite{Kash:1991,N:1993,Littelmann:1994}
\label{theorem.highestwts}
If $B$ is a $U_q(\mathfrak{g})$-crystal in the category of integrable highest-weight crystals,
then the connected components of $B$ correspond to the irreducible components and the
irreducible components are in bijection with the highest weight vectors.
\end{theorem}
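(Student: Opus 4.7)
The plan is to show that each connected component of $B$ contains a unique highest weight vector, and that the component generated by a highest weight vector of weight $\lambda$ is isomorphic (as an $A$-regular poset) to $B(\lambda)$, the crystal of the irreducible module of highest weight $\lambda$. This then packages the result using Theorem~\ref{thm:stembridge}.

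First I would establish existence: pick any $b\in B$ and any connected component $C\subseteq B$ containing $b$. By the integrability hypothesis, there exist $i_1,\dots,i_h\in I$ with $\et_{i_1}\cdots\et_{i_h} b$ highest weight, and this element lies in $C$ since each application of $\et_i$ moves along an edge of the crystal graph. Hence every connected component contains at least one highest weight vertex. Because the action of each $\et_i$ either raises the weight by $\alpha_i$ or yields $\mathbf{0}$, and since $C$ is connected, the multiset of weights in $C$ (and in particular the highest-weight candidates) is controlled by Definition~\ref{definition.crystal}(2).

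Next I would prove uniqueness of the highest weight vector in a given component, which is the main obstacle. Suppose $u,u'\in C$ are both highest weight. Using the Stembridge local axioms (P1)--(P6'), which hold by assumption since $B$ is a $U_q(\mathfrak{g})$-crystal, the subposet of $C$ obtained by restricting to vertices reachable from $u$ via $\ft_i$'s is $A$-regular with maximal element $u$. Writing $\lambda=\wt(u)=\sum_{i\in I}\vp_i(u)\Lambda_i$ (valid since $u$ is highest weight, so $\ve_i(u)=0$ and axiom (3) of Definition~\ref{definition.crystal} gives $\vp_i(u)=\langle\alpha_i^\vee,\wt(u)\rangle$), Stembridge's uniqueness result \cite[Proposition 1.4]{St:2003} identifies this subposet with $B(\lambda)$. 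The analogous statement for $u'$ gives a subposet isomorphic to $B(\wt(u'))$. Because these two subposets both lie in the connected component $C$, and the lowering edges from $u$ must eventually reach $u'$ (or conversely) via a mixed sequence of $\et_i$ and $\ft_i$, one traces through the local axioms to see that $u=u'$: the key point is that no highest weight vector can appear strictly below another highest weight vector in the same component without forcing a contradiction with (P3)--(P4), since any $\et_i$-edge arriving at a would-be highest weight vector would give a non-zero $\et_i$ image.

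Having established that each component $C$ has a unique highest weight vector $u_C$ of some weight $\lambda_C$, I would invoke Theorem~\ref{thm:stembridge} to conclude $C\cong B(\lambda_C)$ as $U_q(\mathfrak{g})$-crystals, and therefore $C$ is the crystal of the irreducible highest weight module $V(\lambda_C)$. Summing over components yields the decomposition
\[
B \;=\; \bigsqcup_{u\text{ highest weight in }B} B(\wt(u)),
\]
which corresponds bijectively to the decomposition of the underlying $U_q(\mathfrak{g})$-module into irreducibles, with the highest weight vectors indexing the summands. The main obstacle is the uniqueness step; everything else is a direct consequence of integrability and of the Stembridge characterization already recorded in Theorem~\ref{thm:stembridge}.
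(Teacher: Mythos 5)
The paper does not actually prove this statement --- it is imported wholesale from the cited references of Kashiwara, Nakashima and Littelmann --- so there is no internal argument to compare against; I will assess your proposal on its own terms. The architecture is reasonable (existence of a highest weight vector in each component from the integrability hypothesis, then identification of components with some $B(\lambda)$ via Theorem~\ref{thm:stembridge}), but the uniqueness step, which you correctly flag as the main obstacle, is not actually closed. Your stated reason --- that an $i$-edge arriving at a would-be highest weight vector $u'$ would force $\et_i u' \neq \mathbf{0}$ --- only shows that highest weight elements are sources of the directed graph; it does not rule out two distinct sources lying in the same connected component, since connectivity is undirected and a vertex may a priori receive edges of different colours originating from different sources. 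The missing ingredient is a closure statement: if $u$ is highest weight of weight $\lambda$, one must show that the set $D(u)$ of elements reachable from $u$ by lowering operators is stable under every $\et_i$ (up to $\mathbf{0}$), so that $D(u)$ is an entire connected component; combined with the fact that $B(\lambda)$ contains a unique element annihilated by all $\et_i$ (every other element has weight strictly below $\lambda$ and $\ve_i>0$ for some $i$), this forces $u'=u$. That closure is exactly what the identification $D(u)\cong B(\lambda)$ buys you, but only once you know that the isomorphism of \cite[Proposition~1.4]{St:2003} intertwines the raising operators as well as the lowering ones; as written you invoke it only as a statement about the poset of $\ft$-descendants and then fall back on a hand-wave.

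A secondary point of scope: Stembridge's local axioms characterize crystals of \emph{simply-laced} algebras only, whereas the theorem as stated and cited holds for arbitrary symmetrizable Kac--Moody $\mathfrak{g}$. Your route therefore proves at best the simply-laced case. This suffices for every application in the present paper, where all crystals are of type $A_{\ell-1}$, but the general statement requires the representation-theoretic input of the cited works: the crystal basis of an integrable module is compatible with its decomposition into irreducibles, each $B(\lambda)$ is connected with a unique highest weight element, and the crystal of a direct sum is the disjoint union of the crystals of the summands. You should either restrict the claim to the simply-laced setting or replace the Stembridge argument by this appeal to Kashiwara's results.
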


\section{Crystal on affine factorizations}
\label{section.crystal operators}

We start this section by defining affine factorizations, 
elements of $\tilde S_n$ with a decreasing feature
that appear prominently in the geometry and combinatorics of
the affine Grassmannian $\Gr$.  We introduce operators on a 
distinguished subset of these elements and prove properties needed 
to show that they are crystal operators for quantum algebra representations 
of type $A$.
In Section~\ref{subsection.two factor}, we show how the crystal operators can
be extended to act on a different subset of affine factorizations.

In subsequent sections, we shall see that the operators support
certain Young--Specht modules and that they specialize to the reflection, 
raising and lowering crystal operators of~\cite{KN:1994} 
on semi-standard Young tableaux.
We will also give applications of the resulting crystal graph 
to the affine and positive
Grassmannian, Gromov--Witten invariants, and fusion rules.

\subsection{Affine factorizations}
\label{subsection.afffac}

Let $i_1\cdots i_\ell$ be a sequence with each $i_r\in [n]$.
The word $i_1\cdots i_\ell$ is {\it cyclically decreasing} if no number is 
repeated and $j-1\,j$ does not occur as a subword for any $j\in[n]$ (recall that we
take all indices mod $n$).
If $i_1\cdots i_\ell$ is cyclically decreasing, then we say
the permutation $w = s_{i_1}\cdots s_{i_\ell}$ is cyclically decreasing.
Define the {\it content} of a permutation $w \in \tilde S_n$ as
$$
\content(w) = \{i \in [n] \mid i \;\text{appears in a reduced word for} \;w\} \,.
$$
Note that this set can be obtained from a single reduced word
for $w$ and is independent of the reduced word chosen.
Moreover, a cyclically decreasing permutation
is uniquely determined by its content. Hence, we often abuse notation and
write the cyclically decreasing words for the actual permutation.

Foremost, cyclically decreasing elements describe the structure in homology
$H_*(\Gr)$.  For any $u\in\tS$, the Pieri rule is
\begin{equation}
\label{kpieri}
\xi_{s_{r-1}\cdots s_1 s_0}\xi_u = \sum_{v:\ell(v)=r} \xi_{vu}\,,
\end{equation}
over all cyclically decreasing permutations $v\in \tilde S_n$ where
$\ell(vu)=r+\ell(u)$ and $vu\in\tS$.
The focal point of our study is a set of distinguished products of 
cyclically decreasing elements.  For any composition 
$\alpha=(\alpha_1,\ldots,\alpha_{\ell})\in \mathbb N^\ell$
and $w\in\tilde S_n$ of length $|\alpha|:=\alpha_1+\cdots + \alpha_{\ell}$,
an {\it affine factorization} of $w$ of weight $\alpha$ is a decomposition
of the form $w=w^{\ell}\cdots w^1$, where $w^i$ 
is a cyclically decreasing permutation of length $\alpha_i$ for each
$1\le i\le \ell$.  We denote the set of
affine factorizations of $w$ by $\mathcal W_{w}$, and the subset of
these having weight $\alpha$ is $\mathcal W_{w,\alpha}$.
Their enumeration describes more general homology products;
for $u\in \tS$,
\begin{equation}
\label{iteratekpieri}
\xi_{s_{\alpha_\ell-1}\cdots s_1 s_0}\cdots
\xi_{s_{\alpha_1-1}\cdots s_1 s_0}\xi_u = 
\sum_{v\in\tS}\mathcal K_{vu^{-1},\alpha}\,
\xi_{v}\,,
\end{equation}
where $\mathcal K_{w,\alpha}=|\mathcal W_{w,\alpha}|$ for any $w\in \tilde S_n$.

The generating functions of affine factorizations were 
considered by Lam in~\cite{Lam:2006} as {\it affine Stanley 
symmetric functions}.  Defined for any $\tilde{w} \in\tilde S_n$ by
\begin{equation}
\label{affstan}
F_{\tilde{w}}(x)=F_{\tilde{w}}=\sum_{w^\ell\cdots w^1\in \mathcal W_{\tilde{w}}}
x_1^{\ell(w^1)}\cdots x_\ell^{\ell(w^\ell)} \,,
\end{equation}
the functions connect to several notable families.  At the fundamental level, 
when $w\in S_n$, these are precisely the functions constructed by Stanley in~\cite{Stanley:1984} 
with the specific intention of realizing the number of reduced words for $w$ as the
coefficient of $x_1x_2\cdots x_\ell$. These ``Stanley symmetric functions"
had in fact been studied earlier by Lascoux and Sch\"utzenberger \cite{LS:1982} 
as the stable limit of Schubert polynomials $\mathfrak S_w(x)$.  
More generally, we will prove that
affine Stanley symmetric functions are none other than the dual $k$-Schur functions
\eqref{dualks} of~\cite{LM:2008} and we will
discuss the tie between $F_{\tilde w}$ and cohomology classes of 
positroid varieties \cite{Post:2005,KLS:2013}.

\subsection{The crystal operators}
\label{subsection.definition}

We define operators $\et_r,\ft_r,$ and $\st_r$ that act on the $r$-th
and $(r+1)$-st factors in an affine factorization by altering the contents 
of these consecutive factors.  The alteration is determined by a process 
of pairing reflections in their respective contents.  The process is independent 
of $r$ and it can thus be defined on a product of two cyclically decreasing 
factors.

Given a cyclically decreasing permutation $u\in \tilde S_n$,
since $\content(u)$ is strictly contained in $[n]$,
there exists some $x\in [n]$ such that $u\in S_{\hat x}$, where we have defined
$$
	S_{\hat x}=\langle s_0,s_1,\ldots,\hat s_x,\ldots,s_{n-1}\rangle\subseteq \tilde S_n \,.
$$
Therefore, for such a fixed $x$, there is a unique reduced 
word for $u$ given by the decreasing arrangement of entries 
in $\content(u)$ taken with respect to the order
\begin{equation}
\label{orderx}
	x-1> x-2>\dots>0> n-1>\dots>x+1\,.
\end{equation}
Consider cyclically decreasing permutations $u,v\in S_{\hat x}$.  
The $uv$-{\it pairing with respect to $x$} is defined by pairing
the largest $b\in \content(u)$ with the smallest $a>b$ in $\content(v)$
using the ordering in~\eqref{orderx}. If there is no such $a$ in $\content(v)$ then
$b$ is unpaired.  The pairing proceeds in decreasing order
on elements of $\content(u)$, and with each iteration 
previously paired letters of $\content(v)$ are ignored.

\begin{example}
\label{ex:pairing}
Let $n=14, u=s_{12} s_5 s_9 s_8 s_2$, and $v=s_7 s_6 s_4 s_1 s_0 s_{13} s_{11}$.
The $uv$-pairing with respect to $x=10$ proceeds from left to right on the 
words for $u$ and $v$ given by writing $\content(u)$ and $\content(v)$ in
decreasing order with respect to $x$: 
$$
(9,8,5_1,2_2,12_3) (7,6_1,4_2,1,0,13_3,11)\,.$$
Here the pairs are denoted by matching subscripts.
The $uv$-pairing with respect to $x=3$ is
$$
(2,12_1,9_2,8_3,5_4) (1,0_3,13_1,11_2,7,6_4,4)
\,.
$$
For $n=5,u=s_1s_0$, and $v=s_4s_3s_1$, the $uv$-pairing with respect to 2 is 
$$
(1,0_1)(1_1,4,3)
\,.
$$
\end{example}

Given $w\in S_{\hat x}$ for some $x\in [n]$, the crystal operators are defined to act by 
changing unpaired entries in adjacent factors of a factorization $w^\ell\cdots w^1$ of $w$.
Since all the factors in an affine factorization of $w$ lie in $S_{\hat x}$, we can pair 
any two adjacent factors with respect to $x$ and set
$$
L_r(w^\ell \cdots w^1)=
\{
b\in\content(w^{r+1}) \mid
b  \text{ is unpaired in the $w^{r+1}w^r$-pairing}
\}
\,,
$$
$$
R_r(w^\ell \cdots w^1)=
\{
b\in\content(w^{r}) \mid
b  \text{ is unpaired in the $w^{r+1}w^r$-pairing}
\}
\,.
$$

\begin{definition}
\label{definition.crystal operators}
Fix $x\in[n]$.  We define operators on cyclically decreasing 
$u,v\in S_{\hat x}$ as follows:

\medskip

\noindent
(i) $\et_1(uv)=\tilde u \tilde v$ where $\tilde u$ and $\tilde v$  are the
unique cyclically decreasing elements with
$$
\content(\tilde u)=\content(u)\backslash\{b\}\quad\text{and} 
\quad \content(\tilde v)=\content(v)\cup\{b-t\}
$$
for $b=\min(L_1(uv))$ and
$t=\min\{i\geq 0\mid b-i-1\not\in\content(u)\}$.
If $L_1(uv)=\emptyset$, $\et_1(uv)=\bf{0}$.

\medskip

\noindent
(ii) $\ft_1(uv)=\tilde u\tilde v$ 
where $\tilde u$ and $\tilde v$  are the
unique cyclically decreasing elements with
$$
\content(\tilde u)=\content(u)\cup\{a+s\}\quad\text{and}\quad
\content(\tilde v)=\content(v)\backslash\{a\}
$$ 
for $a=\max(R_1(uv))$ and
$s=\min\{i\geq 0\mid a+i+1\not\in\content(v)\}$.
If $R_1(uv)=\emptyset$, $\ft_1(uv)=\bf{0}$.

\medskip

\noindent
(iii) $\st_1=\ft_1^{q-p}$ if $q>p$ and $\st_1=\et_1^{p-q}$ if $p>q$
where $p=|L_1(uv)|$  and $q=|R_1(uv)|$.
When $p=q$, $\st_1$ is the identity map.

\medskip

\noindent
Given an affine factorization $w^{\ell} \cdots w^1$ for $w\in S_{\hat x}$,
$\et_r,\ft_r,\st_r$ are defined for $1\leq r<\ell$ by
\begin{equation}
\label{equation.er}
\begin{split}
	\et_r(w^\ell\cdots w^1) &= w^{\ell}\cdots \et_1(w^{r+1}w^r)\cdots w^1 \,,\\
	\ft_r(w^\ell\cdots w^1) &= w^{\ell}\cdots \ft_1(w^{r+1}w^r)\cdots w^1 \,,\\
	\st_r(w^\ell\cdots w^1) &= w^{\ell}\cdots \st_1(w^{r+1}w^r)\cdots w^1\,.
\end{split}
\end{equation}
\end{definition}

\begin{remark}
Definition~\ref{definition.crystal operators}
is well-defined since the cyclically decreasing permutations
$\tilde u$ and $\tilde v$ are uniquely defined by a strict subset 
of $[n]$ giving their contents.  In particular,
$\content(v)\cup \{b-t\}\subseteq[n]\backslash\{x\}$
since $x\not\in\content(u)\cup\content(v)$ 
and $b-t\in\content(u)$ and similarly,
$\content(u)\cup \{a+s\}\subseteq[n]\backslash\{x\}$.
\end{remark}

\begin{example}
We appeal to the pairings computed in Example~\ref{ex:pairing} to
compute the images of the following affine factorizations.
With $n=14, u=s_{12} s_5 s_9 s_8 s_2$, and $v=s_7 s_6 s_4 s_1 s_0 s_{13} s_{11}$,
and $x=10$, we have
$$
\et_1(uv)= (9,5_1,2_2,12_3) (8,7,6_1,4_2,1,0,13_3,11)
\,,
$$
$$
\ft_1(uv)= (9,8,7,5_1,2_2,12_3) (6_1,4_2,1,0,13_3,11)
\,,
$$
$$
\st_1(uv)= (9,8,7,5_1,2_2,1,12_3) (6_1,4_2,0,13_3,11)
\,.
$$
For the same $u$ and $v$, but now with $x=3$, we have
$$
\et_1(uv)= (12_1,9_2,8_3,5_4) (2,1,0_3,13_1,11_2,7,6_4,4)\,.
$$
Pairing $uv=(s_1s_0)(s_1s_4s_3)\in \tilde S_5$ with respect to $x=2$ yields
$\et_1(uv)= (0_1)(1_1,0,4,3)$.
\end{example}

For $x\in[n]$ and any $w\in S_{\hat x}$, consider the graph $B(w)$ whose 
vertices are the affine factorizations $\mathcal{W}_w$ with $\ell$ factors (some of which might
be trivial) and whose $I$-colored edges $x\stackrel{i}{\to} y$ for $x,y\in B(w)$ are determined by 
$$
	\ft_i\, x=y \,.
$$
In Appendix~\ref{appendix.stembridge}, we show that $B(w)$ is a $U_q(A_{\ell-1})$-crystal graph by proving
that the Stembridge axioms spelled out in Section~\ref{subsection.stembridge} are satisfied. 

\begin{theorem}
\label{theorem.crystal}
For $x\in[n]$ and any $w\in S_{\hat x}$, $B(w)$ is a $U_q(A_{\ell-1})$-crystal.
\end{theorem}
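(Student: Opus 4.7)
The plan is to invoke Theorem~\ref{thm:stembridge}, which reduces the task to verifying that the edge-colored graph $B(w)$ satisfies the Stembridge local axioms (P1)--(P6) and (P5'), (P6') for the Cartan matrix of type $A_{\ell-1}$. Before doing so I would establish that $B(w)$ is an abstract $U_q(A_{\ell-1})$-crystal in the sense of Definition~\ref{definition.crystal}. Assign the weight $\wt(w^\ell \cdots w^1) = \sum_{r=1}^\ell \ell(w^r)\, \epsilon_r$, so that transferring one letter from factor $r{+}1$ to factor $r$ (as $\et_r$ does) shifts the weight by $\epsilon_r - \epsilon_{r+1} = \alpha_r$. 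The choice of $t = \min\{i \geq 0 : b - i - 1 \notin \content(u)\}$ in Definition~\ref{definition.crystal operators} selects the bottom of the maximal descending run in $\content(u)$ ending at $b$ with respect to the order~\eqref{orderx}, ensuring the new contents still correspond to cyclically decreasing elements of $S_{\hat x}$. Mutual invertibility $\ft_r \et_r = \et_r \ft_r = \id$ (where defined) follows from checking that the transferred letter becomes the extremal unpaired element on the opposite side of the new pairing.

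The third axiom $\vp_r(b) = \ve_r(b) + \langle \alpha_r^\vee, \wt(b)\rangle$ relies on the intermediate claim $\ve_r(b) = |L_r(b)|$ and $\vp_r(b) = |R_r(b)|$, which I would prove by induction: applying $\et_r$ decreases $|L_r|$ by exactly one while the newly introduced letter $b-t$ on the other side enters as a paired letter, preserving the running balance. Since the $w^{r+1} w^r$-pairing is a bijection between paired subsets of the two contents, one then has $|R_r| - |L_r| = \ell(w^r) - \ell(w^{r+1})$, which equals $\langle \alpha_r^\vee, \wt(b)\rangle$ for the standard simple coroot.

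Turning to the Stembridge axioms themselves, (P1) is immediate from monotonicity of factor lengths under $\et_r$, and (P2) follows from the deterministic selection of the extremal unpaired element. When $|i - j| \geq 2$, the operators $\et_i$ and $\et_j$ act on disjoint consecutive factor pairs, so all the interaction axioms (P3)--(P6) and their duals reduce to the trivial $a_{ij} = 0$ case of strict commutation.

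The main obstacle, and the combinatorial heart of the proof, is the case $j = i+1$, where $\et_i$ and $\et_{i+1}$ share the factor $w^{i+1}$. For this I would undertake a triple-factor analysis of $(w^{i+2}, w^{i+1}, w^i)$: removing the letter $b$ from $\content(w^{i+1})$ via $\et_i$ can alter the $w^{i+2} w^{i+1}$-pairing (either $b$ was paired, so its partner in $\content(w^{i+2})$ becomes unpaired, or $b$ was already unpaired, so that pairing is undisturbed), while the insertion of $b-t$ into $\content(w^i)$ affects the $w^{i+1} w^i$-pairing symmetrically. One identifies a small number of configurations according to these pairing statuses and the locations of the extremal unpaired letters on each side, then computes $\Delta_i \ve_{i+1}$, $\Delta_i \vp_{i+1}$ and verifies the commutation squares or hexagons required by (P5), (P6), (P5'), (P6'). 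This detailed case analysis, whose length explains its relegation to Appendix~\ref{appendix.stembridge}, completes the proof.
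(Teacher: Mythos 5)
Your proposal follows essentially the same route as the paper's proof in Appendix~\ref{appendix.stembridge}: reduce to the Stembridge local axioms via Theorem~\ref{thm:stembridge}, dispose of (P1), (P2) and the $|i-j|\ge 2$ case immediately, and handle adjacent indices by a case analysis on three consecutive factors using the decomposition of Lemma~\ref{lem:uvdecomp}. The only cosmetic difference is that the paper deduces (P5$'$) and (P6$'$) from (P5) and (P6) via an order-reversing involution $*$ on factorizations rather than by a separate direct case analysis.
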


Consequently, by Theorem~\ref{theorem.highestwts}, the connected components 
of $B(w)$ are in bijection with highest weight vectors as defined below.

\begin{definition}
\label{def:yama}
Fix $x\in [n]$, $w\in S_{\hat x}$, and a composition $\alpha=(\alpha_1,\ldots,\alpha_\ell)$ 
with $|\alpha|=\ell(w)$.  The factorization $w^\alpha\in \mathcal{W}_{w,\alpha}$
is {\it highest weight} when $\et_r w^\alpha={\bf 0}$ for all $1\leq r<\ell$.
That is, $w^\alpha$ is highest weight if there is no unpaired residue in $w^{r+1}$
in the $w^{r+1}w^{r}$-pairing with respect to $x$ for every 
$r=1,\ldots,\ell-1$.
\end{definition}

\begin{figure}
\begin{center}
\scalebox{0.8}{
\begin{tikzpicture}[>=latex,line join=bevel,]
\node (s3+s4*s1+s2) at (30bp,157bp) [draw,draw=none] {$\left(s_{3}, s_{4}s_{1}, s_{2}\right)$};
  \node (s3*s1+s4*s2+1) at (74bp,9bp) [draw,draw=none] {$\left(s_{3}s_{1}, s_{4}s_{2}, 1\right)$};
  \node (1+s3*s1+s4*s2) at (80bp,305bp) [draw,draw=none] {$\left(1, s_{3}s_{1}, s_{4}s_{2}\right)$};
  \node (s3*s1+s2+s4) at (190bp,157bp) [draw,draw=none] {$\left(s_{3}s_{1}, s_{2}, s_{4}\right)$};
  \node (s3*s1+1+s4*s2) at (110bp,157bp) [draw,draw=none] {$\left(s_{3}s_{1}, 1, s_{4}s_{2}\right)$};
  \node (s1+s3+s4*s2) at (190bp,305bp) [draw,draw=none] {$\left(s_{1}, s_{3}, s_{4}s_{2}\right)$};
  \node (s3+s1+s4*s2) at (80bp,231bp) [draw,draw=none] {$\left(s_{3}, s_{1}, s_{4}s_{2}\right)$};
  \node (s3*s1+s4+s2) at (74bp,83bp) [draw,draw=none] {$\left(s_{3}s_{1}, s_{4}, s_{2}\right)$};
  \node (s1+s3*s2+s4) at (190bp,231bp) [draw,draw=none] {$\left(s_{1}, s_{3}s_{2}, s_{4}\right)$};
  \draw [red,->] (1+s3*s1+s4*s2) ..controls (80bp,284.87bp) and (80bp,264.8bp)  .. (s3+s1+s4*s2);
  \definecolor{strokecol}{rgb}{0.0,0.0,0.0};
  \pgfsetstrokecolor{strokecol}
  \draw (89bp,268bp) node {$2$};
  \draw [blue,->] (s1+s3+s4*s2) ..controls (190bp,284.87bp) and (190bp,264.8bp)  .. (s1+s3*s2+s4);
  \draw (199bp,268bp) node {$1$};
  \draw [red,->] (s3+s4*s1+s2) ..controls (42.167bp,136.54bp) and (54.608bp,115.61bp)  .. (s3*s1+s4+s2);
  \draw (67bp,120bp) node {$2$};
  \draw [red,->] (s3+s1+s4*s2) ..controls (88.25bp,210.65bp) and (96.617bp,190.01bp)  .. (s3*s1+1+s4*s2);
  \draw (108bp,194bp) node {$2$};
  \draw [blue,->] (s3*s1+s4+s2) ..controls (74bp,62.872bp) and (74bp,42.801bp)  .. (s3*s1+s4*s2+1);
  \draw (83bp,46bp) node {$1$};
  \draw [blue,->] (s3+s1+s4*s2) ..controls (66.099bp,210.43bp) and (51.766bp,189.21bp)  .. (s3+s4*s1+s2);
  \draw (70bp,194bp) node {$1$};
  \draw [blue,->] (s3*s1+1+s4*s2) ..controls (100.1bp,136.65bp) and (90.059bp,116.01bp)  .. (s3*s1+s4+s2);
  \draw (106bp,120bp) node {$1$};
  \draw [red,->] (s1+s3*s2+s4) ..controls (190bp,210.87bp) and (190bp,190.8bp)  .. (s3*s1+s2+s4);
  \draw (199bp,194bp) node {$2$};
\end{tikzpicture}
}
\end{center}
\caption{The crystal $B(w)$ for $w=s_3s_4s_1s_2\in S_5$ with 3 factors (of type $A_2$)
\label{figure.crystal}}
\end{figure}
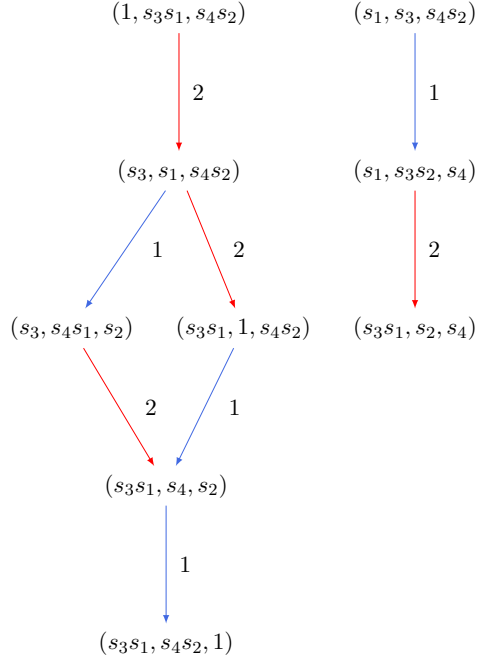

\begin{example}
\label{example.crystal}
The crystal $B(s_3s_4s_1s_2)$ of type $A_2$ is displayed in 
Figure~\ref{figure.crystal}.  It has two highest weight elements 
$(1)(s_3 s_1)(s_4 s_2)$ and $(s_1)(s_3)(s_4s_2)$ of weights
$(2,2)$ and $(2,1,1)$, respectively. In {\sc Sage},
this crystal can be generated by
\begin{verbatim}
  sage: W = WeylGroup(['A',4], prefix='s')
  sage: w = W.from_reduced_word([3,4,1,2])
  sage: B = crystals.AffineFactorization(w,3)
  sage: view(B)
\end{verbatim}
\end{example}

\subsection{Properties of the crystal operators}
\label{subsection.properties}
Here we establish that the crystal operators of 
Definition~\ref{definition.crystal operators}
map factorizations of $w$ to factorizations of the same element $w$
so that indeed 
$\et_i, \ft_i \colon \mathcal W_w \to \mathcal W_w \cup \{ {\bf 0} \}$.
To this end, we carefully study properties of our pairing process.

\begin{lemma}
\label{lem:uvdecomp}
Consider $x\in [n]$ and cyclically decreasing permutations $u,v\in S_{\hat x}$ 
where there exists $b=\min(L_1(uv))$.  For the cyclically decreasing elements
$u_1,u_2,v_1,$ and $v_2$ defined by

\noindent
$\content(u_1)=\{z\in\content(u) \mid b<z<x\},$ 
$\content(u_2)=\{z\in\content(u) \mid  x+1 \le z<b-t-1\}$

\noindent
$\content(v_1)=\{z\in\content(v) \mid b+1<z<x\}$,
$\content(v_2)=\{z\in\content(v) \mid x+1\le z<b-t\}$,

\noindent
where $t=\min\{i\geq 0 \mid b-i-1\not\in\content(u)\}$,
we have the decompositions
\begin{equation}
\label{uvdecomp}
u=u_1\,(s_b\, s_{b-1} \cdots s_{b-t}) \,u_2
\quad\text{and}\quad
v=v_1\,(s_b \cdots s_{b-t+1}) v_2\,.
\end{equation}
If $\ell(uv)=\ell(u)+\ell(v)$, then $uv$ is the product 
of the cyclically decreasing elements
\begin{equation}
\label{tuv}
\tilde u = u_1 (s_{b-1}\, \cdots s_{b-t})u_2
\quad\text{and}\quad
\tilde v = v_1 (s_b \cdots s_{b-t+1})s_{b-t}\,v_2
\end{equation}
and, under the $uv$-pairing, every element of $\content(v_1)$ 
is paired with something in $\content(u_1)$ and
every element of $\content(u_2)$ is paired with something in 
$\content(v_2)$.
\end{lemma}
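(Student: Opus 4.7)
The plan is to establish the decomposition~\eqref{uvdecomp} first, and then, invoking $\ell(uv)=\ell(u)+\ell(v)$, to deduce the formula~\eqref{tuv} and the pairing claim. The decomposition of $u$ is immediate: because $b, b-1, \ldots, b-t$ all lie in $\content(u)$ while $b-t-1 \notin \content(u)$, the unique cyclically decreasing expression for $u$ with respect to the order in~\eqref{orderx} is precisely $u_1 \cdot (s_b s_{b-1} \cdots s_{b-t}) \cdot u_2$, where $u_1$ collects the content strictly above $b$ in the chain and $u_2$ that strictly below $b-t$.

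For the decomposition of $v$, three content facts must be checked: $\{b, b-1, \ldots, b-t+1\} \subseteq \content(v)$, $b+1 \notin \content(v)$, and $b-t \notin \content(v)$. The first follows by a short induction on the pairing. Since $b=\min L_1(uv)$, each of $b-1,\ldots,b-t \in \content(u)$ is paired; moreover, once the portion of $\content(u)$ chain-above $b$ has been processed, no unpaired entry of $\content(v)$ chain-above $b$ can remain (this being precisely why $b$ itself is unpaired). Hence $b-1$ can only pair with an entry of $\content(v)$ at chain index $b$, forcing $b \in \content(v)$; iterating, $b-i$ pairs with $b-i+1 \in \content(v)$ for $i=1,\ldots,t$. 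For the second fact, if $b+1 \in \content(v)$ and $b+1 \neq x$, then $b+1$ cannot have been paired off before $b$ is processed (no chain index lies strictly between $b$ and $b+1$), so $b$ would pair with $b+1$, contradicting $b \in L_1(uv)$. For the third, if $b-t$ were in $\content(v)$, then both $u$ and $v$ would contain the block $s_b s_{b-1} \cdots s_{b-t}$ as a consecutive subword; the letters of $u_2$ (chain-below $b-t-1$, noting $b-t-1 \notin \content(u)$) and of $v_1$ (chain-above $b+1$) then commute freely with the letters $s_b,\ldots,s_{b-t}$, so the two blocks may be slid together in $uv$ to produce $(s_b \cdots s_{b-t})^2$, which a standard braid computation shortens by at least $2$, contradicting $\ell(uv)=\ell(u)+\ell(v)$. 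These three facts partition $\content(v)$ and yield the claimed block form for $v$.

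Under length-additivity the formulas for $\tilde u$ and $\tilde v$ are read off from their contents $\content(u)\setminus\{b\}$ and $\content(v)\cup\{b-t\}$. To verify $\tilde u \tilde v = uv$, one commutes $u_2$ past $v_1$ and then past the adjacent blocks (all such commutations are legal thanks to the chain-gaps provided by the three facts above), reducing the claim to the single identity
\[
(s_b s_{b-1} \cdots s_{b-t})(s_b s_{b-1} \cdots s_{b-t+1}) = (s_{b-1} \cdots s_{b-t})(s_b s_{b-1} \cdots s_{b-t+1})\, s_{b-t},
\]
which I would prove by induction on $t$ using braid and far-commutation relations. Finally, the pairing structure is obtained by running the $uv$-pairing sequentially through $\content(u)$ in decreasing chain order: a $u_1$-letter can only match a $v_1$-entry, since block-in-$v$ and $v_2$ are chain-below every $u_1$-letter; the block in $u$ pairs diagonally with the block in $v$ as established in fact one; and by the time $u_2$ is processed, the only unpaired targets at a chain-compatible index are the $v_2$-entries. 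The main obstacle is the third content fact, whose proof requires careful bookkeeping of the commutations between the two $s_b s_{b-1} \cdots s_{b-t}$ blocks and the intervening $u_2, v_1$ letters, together with the braid reduction of $(s_b \cdots s_{b-t})^2$.
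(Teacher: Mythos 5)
Your proposal is correct and follows essentially the same route as the paper: the same block decompositions of $u$ and $v$ read off from the pairing, the same commutation argument together with the braid identity $(s_b\cdots s_{b-t})(s_b\cdots s_{b-t+1})=(s_{b-1}\cdots s_{b-t})(s_b\cdots s_{b-t})$, and the same sequential pairing analysis. The only (harmless) organizational difference is where length-additivity enters: you invoke it up front to rule out $b-t\in\content(v)$ via the shortening of $(s_b\cdots s_{b-t})^2$, whereas the paper deduces the same fact at the very end from $\ell(\tilde u)=\ell(u)-1$ forcing $\ell(\tilde v)=\ell(v)+1$.
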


\begin{proof}
Given that $b=\min(L_1(uv))$ exists, $b\in\content(u)$ and we have
$b\neq x$.  Therefore, 
$u=u_1\,(s_b\, s_{b-1} \cdots s_{b-t}\, \hat{s}_{b-t-1}) \,u_2$
as claimed.
Since the pairing process proceeds from largest to smallest
on entries of $\content(u)$, and $b\in\content(u)$ is unpaired,
$b+1\not\in\content(v)$ and 
every element of $\content(v_1)$ is paired with something in
$\content(u_1)$.
Further, since $b$ is the {\it smallest} unpaired element,
$b-1,\ldots,b-t$ are paired with $b,\ldots,b-t+1\in\content(v)$ 
and every element in $\content(u_2)$ is paired with something
in $\content(v_2)$.

Note since ${b+1}\not\in\content(v)$ by definition of $b$,
$v=v_1\,(s_b \cdots s_{b-t+1}) v_2$
for $\content(v_2)=\{z\in\content(v) \mid x+1\le z\leq b-t\}$.
Equipped also with
$u=u_1\,(s_b\, s_{b-1} \cdots s_{b-t}\, \hat{s}_{b-t-1}) \,u_2$,
we can commute to obtain
$$
u=u_1\,u_2\,(s_b\, s_{b-1} \cdots s_{b-t})
\quad\text{and}\quad
v=(s_b \cdots s_{b-t+1})v_1 v_2
\,.
$$
A succession of Coxeter relations~\eqref{coxeter} implies that
\[
	(s_b\, s_{b-1} \cdots s_{b-t})
	(s_b \cdots s_{b-t+1})=
	(s_{b-1}\, \cdots s_{b-t})
	(s_b \cdots s_{b-t}).
\]
Therefore, $uv=UV$ for
$U = u_1 u_2 (s_{b-1}\, \cdots s_{b-t})$
and $V=
(s_b \cdots s_{b-t+1})s_{b-t}\,v_1\,v_2\,. $
The conditions on the content of $u_2$ 
in Lemma~\ref{lem:uvdecomp} allow us again to commute 
to find that $U=u_1 (s_{b-1}\, \cdots s_{b-t})u_2=\tilde u$,
where we see that $\tilde u$ is cyclically decreasing,
$\content(\tilde u)=\content(u)\backslash\{b\}$,
and $\ell(\tilde u)=\ell(u)-1$.

On the other hand, the length of $V$ is at most $\ell(v)+1$
since it only differs from $v$ by the additional generator
$s_{b-t}$.  If we assume that $\ell(uv)=\ell(\tilde u V)=\ell(u)-1+\ell(v)+1$,
then equality $\ell(V)=\ell(v)+1$ must hold.
This given, $b-t\not\in\content(v_1)\cup \content(v_2)$.
Since $x+1\le b-t\le b$ and $\content(v_1)=\{z\in\content(v) \mid b+1<z<x\}$,
we can commute to find that
$V= v_1 (s_b \cdots s_{b-t+1})s_{b-t}\,v_2=\tilde v$
is cyclically decreasing. 
\end{proof}

The next lemma follows in the same fashion.
\begin{lemma}
\label{lem:Ruvdecomp}
Consider $x\in [n]$ and cyclically decreasing permutations $u,v\in S_{\hat x}$ 
where there exists $a=\max(R_1(uv))$.  For the cyclically decreasing 
elements $u_1,u_2,v_1,$ and $v_2$ defined by 

\noindent
$\content(u_1)=\{z\in\content(u) \mid a+s< z<x\},$ 
$\content(u_2)=\{z\in\content(u) \mid  x+1 \le z<a-1\}$

\noindent
$\content(v_1)=\{z\in\content(v) \mid a+s+1<z<x\}$,
$\content(v_2)=\{z\in\content(v) \mid x+1\le z< a\}$

\noindent
where $s=\min\{i\geq 0 \mid a+i+1\not\in\content(v)\}$,
we have the decompositions
\begin{equation}
\label{maxR}
u=u_1\,(s_{a+s-1}\, s_{a+s-2} \cdots s_{a}) \,u_2
\quad\text{and}\quad
v=v_1\,(s_{a+s} \cdots s_{a+1}s_a) v_2\,.
\end{equation}
If $\ell(uv)=\ell(u)+\ell(v)$, then $uv$ is the product 
of the cyclically decreasing elements
\begin{equation}
\tilde u = u_1 (s_{a+s} s_{a+s-1}\, \cdots s_{a})u_2
\quad\text{and}\quad
\tilde v = v_1 (s_{a+s} \cdots s_{a+1})\,v_2
\end{equation}
and, under the $uv$-pairing,
every element of $\content(v_1)$ is paired with something in 
$\content(u_1)$ and
every element of $\content(u_2)$ is paired with something in 
$\content(v_2)$.
\end{lemma}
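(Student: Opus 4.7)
The approach is dual to that of Lemma~\ref{lem:uvdecomp}, with the roles of $u$ and $v$, of $\min$ and $\max$, and of $b$ and $a$ interchanged; I carry out the same three moves.

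First, I obtain the decomposition of $v$ in \eqref{maxR} directly from the definition of $s$: since $a\in\content(v)$ and $\{a,a+1,\ldots,a+s\}\subseteq\content(v)$ while $a+s+1\notin\content(v)$, the unique cyclically decreasing word for $v$ in the $x$-order \eqref{orderx} partitions as $v_1\cdot(s_{a+s}\cdots s_{a+1}s_a)\cdot v_2$, with $v_1$ collecting the entries of $\content(v)$ that are $x$-above $a+s$ and $v_2$ those that are $x$-below $a$.

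Second, I deduce the $u$-decomposition by analyzing the pairing. Maximality of $a$ in $R_1(uv)$ forces every element of $\content(v)$ that is $x$-above $a$---in particular each of $a+1,\ldots,a+s$---to be paired. Because the greedy rule processes $\content(u)$ in decreasing $x$-order and pairs each $b\in\content(u)$ with the smallest available $z>b$ in $\content(v)$, a reverse induction on $i=s,s-1,\ldots,1$ shows that the element $a+i\in\content(v)$ must match with $a+i-1\in\content(u)$. This forces $\{a,a+1,\ldots,a+s-1\}\subseteq\content(u)$; moreover $a+s\notin\content(u)$, for otherwise $a+s$ would itself consume some partner above $a+s$ and leave one of the $a+i$ unpaired, contradicting the maximality of $a$. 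Writing $u$ in cyclically decreasing form in the $x$-order then yields $u=u_1(s_{a+s-1}\cdots s_a)u_2$.

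Third, under $\ell(uv)=\ell(u)+\ell(v)$, I derive $uv=\tilde u\tilde v$ by the same Coxeter manipulations as in the proof of Lemma~\ref{lem:uvdecomp}. Commuting $u_2$ past the middle block of $u$ and $v_1$ past the middle block of $v$---permissible since the contents of $u_2$ and $v_1$ avoid the indices $\{a,\ldots,a+s\}$ and their immediate neighbors---reduces the identity to the braid computation
\[
(s_{a+s-1}\cdots s_a)(s_{a+s}s_{a+s-1}\cdots s_{a+1}s_a)=(s_{a+s}s_{a+s-1}\cdots s_a)(s_{a+s}s_{a+s-1}\cdots s_{a+1}),
\]
which follows by iterated application of $s_is_{i+1}s_i=s_{i+1}s_is_{i+1}$ from \eqref{coxeter}. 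The length hypothesis ensures no generator is repeated across the rearrangement, so in particular $a+s\notin\content(u_2)\cup\content(v_1)$, and therefore $\tilde u$ and $\tilde v$ are cyclically decreasing as described. Finally, the pairing-containment claim follows from the locality of the greedy process: since $a$ is unpaired, the pairing ``resets'' at the $x$-level of $a$, so the matchings in the $x$-range above $a+s$ and those in the $x$-range below $a$ proceed independently, and therefore pair $\content(v_1)$ into $\content(u_1)$ and $\content(u_2)$ into $\content(v_2)$. The main obstacle is the reverse-induction in the second step, which is the content-theoretic analogue of the pairing analysis underlying Lemma~\ref{lem:uvdecomp}.
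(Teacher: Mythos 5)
Your overall strategy---dualize the proof of Lemma~\ref{lem:uvdecomp}---is exactly what the paper intends (its entire proof of this lemma is the sentence ``The next lemma follows in the same fashion''), and most of your steps are sound: the decomposition of $v$, the identification of the partners of $a+1,\ldots,a+s$, the commutation and braid manipulations, and the final pairing-containment claims all check out. There is, however, one genuine error: your justification that $a+s\notin\content(u)$. You argue that otherwise ``$a+s$ would itself consume some partner above $a+s$ and leave one of the $a+i$ unpaired, contradicting the maximality of $a$,'' but this is false. If $a+s\in\content(u)$, it is processed before $a+s-1,\ldots,a$ and can only pair with an element of $\content(v_1)$ (or remain unpaired); either way the matching $a+i-1\leftrightarrow a+i$ for $1\le i\le s$ is untouched and $a$ is still the largest unpaired element of $\content(v)$. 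Concretely, take $n=10$, $x=9$, $u=s_4$, $v=s_4s_3$: then $R_1(uv)=\{4,3\}$, $a=4$, $s=0$, yet $a+s=4\in\content(u)$, and the decomposition \eqref{maxR} fails for $u$. (Your later remark that the length hypothesis gives $a+s\notin\content(u_2)\cup\content(v_1)$ does not help, since $a+s$ is excluded from those two sets by their very definitions; the issue is whether $a+s$ lies in $\content(u)$ at all.)

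The fact you need is true, but only because of the hypothesis $\ell(uv)=\ell(u)+\ell(v)$, exactly as in the paper's proof of Lemma~\ref{lem:uvdecomp}, where the mirror-image statement $b-t\notin\content(v)$ is deduced only after invoking reducedness (via $\ell(V)=\ell(v)+1$). Here the quickest repair is: if $a+s\in\content(u)$, then both $u$ and $v$ contain the block $s_{a+s}\cdots s_a$, and after commuting $u_2$ and $v_1$ out of the way one gets $uv=u_1u_2(s_{a+s}\cdots s_a)(s_{a+s}\cdots s_a)v_1v_2$, whose middle square is not reduced, contradicting $\ell(uv)=\ell(u)+\ell(v)$. (Alternatively: adjoining the single generator $s_{a+s}$ to $u$ raises its length by at most one, and the length hypothesis forces the increase to be exactly one.) So this step, like the corresponding step for $b-t$ in Lemma~\ref{lem:uvdecomp}, must be placed after the length hypothesis is assumed. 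Two smaller points: the decomposition of $u$ also needs $a-1\notin\content(u)$ (one line: an $a-1$ in $\content(u)$ would pair with the always-available $a$, contradicting $a\in R_1(uv)$), which you never address; and your ``reverse induction'' is more naturally run forward, pairing $a+1$ with $a$ first and then ascending, since at stage $i$ one does not yet know that $a+i-1\in\content(u)$.
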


The fundamental task of crystal operators is to send a factorization 
of $w$ to another factorization of $w$, with a carefully incremented weight change.
From now on we fix $\ell = \ell(\beta)$ to be the length of all weights, 
where if necessary some parts of $\beta$ might be zero.
Let $\alpha_r$ be the $r$-th simple root of type $A_{\ell-1}$.
This given, we can specify the weight change under the
crystal operators of Definition~\ref{definition.crystal operators} 
and show they are inverses of each other.

\begin{prop}
\label{prop:stayput}
Fix $x\in [n]$ and $w\in S_{\hat x}$. If
$w^\beta:=w^\ell \cdots w^1 \in \mathcal W_{w,\beta}$,
then for any $1\le r<\ell$,  
\begin{enumerate}
\item $\et_r(w^\beta) \in \mathcal W_{w,\beta+\alpha_r}$
and $\ft_r(w^\beta)\in \mathcal W_{w,\beta-\alpha_r}$,
or $w^\beta$ is annihilated,
\item $\st_r (w^\beta) \in \mathcal W_{w,s_r \beta}$
where $s_r$ acts on $\beta$ by interchanging $\beta_r$ and $\beta_{r+1}$,
\item $\varepsilon_r(w^\beta)=|L_r(w^\beta)|$ and
$\varphi_r(w^\beta) = |R_r(w^\beta)|$,
\item $\ft_r\,\et_r(w^\beta)=w^\beta$, or $w^\beta$ is annihilated.
The same is true for  $\et_r\,\ft_r(w^\beta)$.
\end{enumerate}
\end{prop}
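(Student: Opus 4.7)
The plan is to reduce to the two-factor case and then invoke the decomposition Lemmas~\ref{lem:uvdecomp} and~\ref{lem:Ruvdecomp}. Since each crystal operator touches only factors $r$ and $r{+}1$ by~\eqref{equation.er}, every claim about $w^\beta$ reduces to the single pair $u = w^{r+1}$, $v = w^r$; because $w^\beta$ is a genuine affine factorization, $\ell(uv) = \ell(u)+\ell(v)$, so the hypotheses of both lemmas are met. Statement (1) is then immediate: when $L_1(uv) \neq \emptyset$, Lemma~\ref{lem:uvdecomp} produces $\et_1(uv) = \tilde u\tilde v$ equal to $uv$ with $\ell(\tilde u) = \ell(u)-1$ and $\ell(\tilde v) = \ell(v)+1$, which shifts the weight precisely by $\alpha_r$; installed in position $r$ of the full factorization this gives $\et_r(w^\beta) \in \mathcal W_{w,\beta+\alpha_r}$, and Lemma~\ref{lem:Ruvdecomp} treats $\ft_r$ dually.

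For (2) and (3) I first establish that a single application of $\et_1$ lowers $|L_1|$ by exactly one (dually $\ft_1$ lowers $|R_1|$ by one). To see this I rerun the pairing on $\tilde u\tilde v$ using the decompositions~\eqref{tuv}: the $u_1$--$v_1$ and $u_2$--$v_2$ blocks of pairings are undisturbed because their letters lie outside the altered cyclic range, while in the middle the shifted block $s_{b-1},\ldots,s_{b-t}$ of $\tilde u$ pairs successively with $s_b,\ldots,s_{b-t+1}$ of $\tilde v$, leaving the newly inserted letter $s_{b-t}$ of $\tilde v$ unpaired. Consequently $L_1(\tilde u\tilde v) = L_1(uv)\setminus\{b\}$ and $R_1(\tilde u\tilde v) = R_1(uv)\cup\{b-t\}$. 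Iterating yields $\varepsilon_r(w^\beta) = |L_r(w^\beta)|$ and $\varphi_r(w^\beta) = |R_r(w^\beta)|$, proving (3), and also certifies that the $(p-q)$-fold iterate in the definition of $\st_r$ is well-defined. For (2) itself, the bijection between paired letters of $u$ and $v$ forces $\ell(u)-\ell(v) = p-q$, so applying $\et_1^{p-q}$ (when $p>q$) or $\ft_1^{q-p}$ (when $q>p$) swaps the values of $\ell(u)$ and $\ell(v)$, which is exactly the action of $s_r$ on $\beta$.

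Finally, (4) drops out of the refined pairing. Every element of $R_1(uv)$ sits inside $\content(v_2)=\{z:x{+}1\le z<b{-}t\}$, hence below $b-t$ in the cyclic order~\eqref{orderx}, so $\max R_1(\tilde u\tilde v) = b-t$; call this $a$. To pin down the $s$-parameter I note $\{b-t,b-t+1,\ldots,b\}\subseteq\content(\tilde v)$, while $b+1\notin\content(\tilde v)$ because $b+1\in\content(v)$ would force the pairing to match $b\in\content(u)$ with $b+1\in\content(v)$, contradicting $b\in L_1(uv)$. Therefore $s=t$, and $\ft_1(\tilde u\tilde v)$ removes $a = b-t$ from $\tilde v$ and reinserts $a+s = b$ into $\tilde u$, recovering $uv$ on the nose. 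The reverse composition $\et_1\ft_1 = \id$ is handled symmetrically via Lemma~\ref{lem:Ruvdecomp}. The main obstacle is the careful bookkeeping of the rerun pairing that justifies the $L_1$ and $R_1$ updates, and the cyclic-order argument pinning $b+1\notin\content(v)$; once those are in place, the rest of the proposition follows almost formally.
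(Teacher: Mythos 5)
Your proposal is correct and follows essentially the same route as the paper: reduce to the two adjacent factors, invoke Lemmas~\ref{lem:uvdecomp} and~\ref{lem:Ruvdecomp} for the decompositions and length bookkeeping in (1), track how the pairing changes under $\et_1$ to get (2) and (3), and identify $\max(R_1(\tilde u\tilde v))=b-t$ with $b+1\notin\content(\tilde v)$ to recover $uv$ in (4). You fill in a few details the paper leaves implicit (the explicit updates $L_1(\tilde u\tilde v)=L_1(uv)\setminus\{b\}$, $R_1(\tilde u\tilde v)=R_1(uv)\cup\{b-t\}$, the identity $\ell(u)-\ell(v)=p-q$, and the computation $s=t$), but the underlying argument is the same.
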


\begin{proof}
Fix $x\in [n]$ and $w\in S_{\hat x}$.
By the definition of $\et_r,\ft_r,\st_r$ for any $r$ given in
\eqref{equation.er}, it suffices to consider
$uv\in \mathcal W_{w,(\beta_1,\beta_2)}$ 
where $\ell(v)=\beta_1$, $\ell(u)=\beta_2$ and $\ell(uv)=\beta_1+\beta_2$ and to prove 
$$
	\st_1 (uv) \in \mathcal W_{w,(\beta_2,\beta_1)}
\;,
\quad
	\et_1(uv) \in \mathcal W_{w,(\beta_1+1,\beta_2-1)} 
\;,
\quad
	\ft_1(uv)\in \mathcal W_{w,(\beta_1-1,\beta_2+1)}\,,
$$
or $uv$ is annihilated.

To this end, if $L_1(uv)=\emptyset$, $\et_1$ annihilates $uv$. 
Otherwise, $b=\min(L_1(uv))$ exists and by Lemma~\ref{lem:uvdecomp},
$w=uv=\tilde u\tilde v$ where $\tilde u$ and $\tilde v$ are
cyclically decreasing permutations with
$\content(\tilde u)=\content(u)\backslash\{b\}$
and $\content(\tilde v)=\content(v)\cup\{b-t\}$.
In fact, $\et_1(uv)=\tilde u\tilde v$ by the 
definition of $\et_1$. 
Note that $\ell(\tilde u)=\beta_2-1$ since it is
obtained by deleting one generator from the
cyclically decreasing permutation $u$.
On the other hand, $\tilde v$ is obtained by adding
one generator to $v$ and therefore
$\ell(\tilde v)\leq \beta_1+1$.
By assumption $\ell(uv)=\beta_1+\beta_2
= \ell(\tilde u\tilde v)\leq \ell(\tilde u)+\ell(\tilde v)
\leq \beta_2-1+\beta_1+1$.
Therefore, $\ell(\tilde v)=\beta_1+1$ and we have proven
$\et_1(uv)=\tilde u\tilde v\in
\mathcal W_{uv,(\beta_1+1,\beta_2-1)}$.

It is also clear from the above discussion that all unbracketed letters 
in $u_1$ in $uv$ remain unbracketed in $\tilde u \tilde v$ implying 
that $\varepsilon_1(uv) = | L_1(uv)|$.
Other cases in (2) and (3) follow in a similar manner.

To prove (4), again consider $uv\in\mathcal W_{w,(\beta_1,\beta_2)}$.
If $L_1(uv)=\emptyset$, then $\et_1$ annihilates $uv$. Otherwise let
$b=\min(L_1(uv))$. For $\tilde u\tilde v= \et_1(uv)$,
recall the decompositions of \eqref{tuv}:
\begin{equation}
\tilde u = u_1 (s_{b-1}\, \cdots s_{b-t})\,u_2\quad
\text{and}\quad
\tilde v=
v_1\,(s_b \cdots s_{b-t+1}s_{b-t})\,v_2\,.
\end{equation}
Proceed with the $\tilde u\tilde v$-pairing on 
the largest to smallest entries  of $\content(\tilde u)$.
Every entry in $\content(v_1)$ is paired to 
something in $\content(u_1)$ by Lemma~\ref{lem:uvdecomp}.
Next, $b-1,\ldots,b-t\in\content(\tilde u)$
are paired with $b,\ldots,b-t+1\in \content(\tilde v)$
and we find that $b-t\in\content(\tilde v)$ is unpaired.
Therefore, $\max(R_1(\tilde u\tilde v))=b-t$.
The conditions on $\content(v_1)$ from
Lemma~\ref{lem:uvdecomp} tell us that
$b+1\not\in\content(\tilde v)$ implying by the definition
of $\ft_1$ that $\ft_1\et_1(uv)=uv$.

The proof for $\et_1\ft_1$ follows in a similar manner.
\end{proof}

\begin{example}
Let $n=8$, $u=s_4s_3s_2s_1s_0s_7$ and $v=s_5s_2s_1s_0$. Since the
$uv$-pairing with respect to $x=6$ is
$uv=(4_1,3,2,1_2,0_3,7_4)(5_1,2_2,1_3,0_4)$, we find that
\begin{equation*}
	\et_1(uv) =  (4_1,3,1_2,0_3,7_4)(5_1,2_2,1_3,0_4,7)\;.
\end{equation*}
It is not hard to check that $\ft_1$ acts on this by
deleting the 7 from the right factor and adding a 2 to the
left with braid relations, and indeed $\ft_1 \et_1(uv) = uv$.
\end{example}

\subsection{Two factor case}
\label{subsection.two factor}

In this section, we show that when $w$ has only two factors,
we can drop the assumption that $w\in S_{\hat{x}}$  for some $x\in[n]$.
We define crystal operators in the two factor case by reducing 
to the $w\in S_{\hat{x}}$ case and then proceeding as in Section~\ref{subsection.definition}.

Let $w\in \tilde{S}_n$ with $uv \in \mathcal W_{w,(\beta_1,\beta_2)}$.
Do the following initial bracket algorithm: Whenever $i$ is in $\content(u)$ and $i+1$ is in $\content(v)$, 
bracket them. Now either:
\begin{enumerate}
\item $i$ is in a block of the form
$(\ldots [b \cdots b-t] \ldots ) (\ldots [b \cdots b-t+1]\ldots )$ where $b\in \content(u)$ is unbracketed
under the initial bracketing; we assume $t$ to be maximal; or
\item $i$ is in a block of the form
$(\ldots [b-1 \cdots b-t]\ldots ) (\ldots [b \cdots b-t]\ldots )$ where 
$b-t\in \content(v)$ is unbracketed under the initial bracketing; we assume $t$ to be maximal; or
\item $i$ is in a block of the form
$(\ldots [b-1 \cdots b-t]\ldots) ( \ldots [b \cdots b-t+1]\ldots )$ with $b\not \in \content(u)$ and 
$b-t \not \in \content(v)$; again assume that $t$ is maximal.
\end{enumerate}

\begin{remark}
\label{remark.letters}
Note that in Case (1) above $b+1 \not \in \content(v)$ since otherwise $b$ in $\content(u)$ would be bracketed.
Similarly, in Case (2) $b-t-1 \not \in \content(u)$ since otherwise $b-t$ in $\content(v)$ would be bracketed.
\end{remark}

\begin{lemma}
\label{lemma.existence of i}
Let $w\in \tilde{S}_n$ and $uv \in \mathcal W_{w,(\beta_1,\beta_2)}$. 
Then either $w \in S_{\hat x}$ 
for some $x\in[n]$ or there exists an $i\in \content(uv)=[n]$ in Case (3) above.
\end{lemma}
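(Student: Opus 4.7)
The plan is to argue by contradiction. Suppose both that $w\notin S_{\hat x}$ for every $x\in[n]$ and that no $i\in\content(uv)$ lies in a block of Case~(3). Write $A=\content(u)$ and $B=\content(v)$; the first hypothesis translates to $A\cup B=[n]$, while the paper's standing observation that any cyclically decreasing permutation lies in some $S_{\hat x}$ gives $A,B\subsetneq[n]$. Consequently both $A\setminus B$ and $B\setminus A$ are nonempty. Label each $i\in[n]$ by $\alpha$, $\beta$, or $\gamma$ according as $i\in A\setminus B$, $i\in B\setminus A$, or $i\in A\cap B$; both $\alpha$- and $\beta$-labels then occur in the cyclic label sequence on $[n]$.

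The key observation is that a Case~(3) block corresponds exactly to a cyclic interval $[L,R]\subseteq[n]$ with symbol pattern $\alpha\gamma^{\ast}\beta$---that is, $L$ is of type $\alpha$, $R$ is of type $\beta$, and every position strictly between them is of type $\gamma$. Indeed, setting $b:=R$ and $b-t:=L$, the block conditions $\{b-1,\ldots,b-t\}\subseteq A$ and $\{b,\ldots,b-t+1\}\subseteq B$ translate to $L\in A$, $R\in B$, and $L+1,\ldots,R-1\in A\cap B$, while $b\notin A$ and $b-t\notin B$ translate to $L$ being of type $\alpha$ and $R$ being of type $\beta$. Maximality of $t$ is automatic, since extending $L$ downward would require $L\in B$ and extending $R$ upward would require $R\in A$, both ruled out by the types of $L$ and $R$.

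It now suffices to exhibit an $\alpha\gamma^{\ast}\beta$ pattern in the cyclic label sequence. Start at any $\alpha$-position $L_0$ and walk cyclically forward along $[n]$. Because $\beta$-positions exist and the walk eventually visits every position of $[n]$, it encounters a $\beta$-position; let $R$ be the first one reached, and let $L$ be the last $\alpha$-position encountered on the walk before $R$ (with $L=L_0$ in the case no further $\alpha$ appears strictly between $L_0$ and $R$). Every position strictly between $L$ and $R$ fails to be $\alpha$ by the choice of $L$ and fails to be $\beta$ by the choice of $R$, and so is of type $\gamma$. The cyclic interval $[L,R]$ therefore realizes the $\alpha\gamma^{\ast}\beta$ pattern and hence is a Case~(3) block, contradicting our assumption. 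The one delicate point is the clean translation between the block description of the lemma and the symbol pattern $\alpha\gamma^{\ast}\beta$, which has been carried out in the middle paragraph.
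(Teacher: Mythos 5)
Your proof is correct. The translation of a Case (3) block into the cyclic pattern $\alpha\gamma^{\ast}\beta$ (with $b=R$, $b-t=L$) checks out in both directions, including the maximality of $t$ and the fact that the resulting interval contains at least one initially bracketed letter, namely the pair $(L,L+1)$; and the walk from an $\alpha$-position to the first $\beta$-position does produce such a pattern once one knows $A\cup B=[n]$ with $A,B\subsetneq[n]$.

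Your route is genuinely different in organization from the paper's, and cleaner. The paper argues by contradiction: assuming every block is of Case (1) or (2), it starts from one block and chases a chain of blocks around the circle, showing that a chain of Case (1) blocks would force $\content(u)=[n]$ (impossible for a cyclically decreasing element) and that the transition to a Case (2) block is blocked by Remark 3.5, so a Case (3) block must intervene. You instead give a direct construction: the three-way labeling of $[n]$ by $A\setminus B$, $B\setminus A$, $A\cap B$ turns "Case (3) block" into the purely order-theoretic statement "there is a cyclic interval reading $\alpha\gamma^{\ast}\beta$", whose existence is immediate once both $\alpha$ and $\beta$ occur. Both arguments rest on the same two facts (the contents of $u$ and $v$ are proper subsets that jointly cover $[n]$), but your reformulation isolates the combinatorial core and avoids the block-chasing and the appeal to Remark 3.5 entirely; the paper's version, on the other hand, stays closer to the block language that is reused in the subsequent construction of the two-factor crystal operators.
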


\begin{proof}
If $w \in S_{\hat x}$, we are done. So assume that $w\not \in S_{\hat x}$ for any $x$.
Note that since $u$ is cyclically decreasing, there exists at least one letter $j\in[n]$ such that
$j \not \in \content(u)$. The same holds for $v$. Since all letters in $[n]$ appear in $\content(uv)$ 
neither $\content(u)$ nor $\content(v)$ can be empty. Hence there must be a letter $a\in \content(u)$ 
such that $a+1\not \in \content(u)$. Since all letters
appear in $\content(uv)$, we must have $a+1 \in \content(v)$. This implies that we have at least one initially bracketed 
letter in $\content(uv)$. Now assume by contradiction that all initially bracketed letters are in Cases (1) 
or (2) above.

If $\content(uv)$ contains a block $(\ldots [b \cdots b-t] \ldots ) (\ldots [b \cdots b-t+1]\ldots )$, then
$b+1\not \in \content(v)$ (else we are in Case (3) or $t$ is not maximal). Hence $b+1\in \content(u)$. 
Let $s$ be maximal such that
$b+j\in \content(u)$ for $1\le j\le s$, but $b+j \not \in \content(v)$. If $b+j=b-t$ (where recall that we take all letters 
$\mod n$), then all letters in $[n]$ occur in $\content(u)$, which is not possible. Hence another block
$(\ldots [b' \cdots b'-t'] \ldots ) (\ldots [b' \cdots b'-t'+1]\ldots )$ or 
$(\ldots [b'-1 \cdots b'-t']\ldots ) (\ldots [b' \cdots b'-t']\ldots )$ must occur.
If only blocks of the first form occur, then as in the previous argument all letters occur in $\content(u)$, which
is a contradiction. But note since $s>0$ and $b+s+1=b'-t'$, we have that $b'-t'-1\in \content(u)$, which by
Remark~\ref{remark.letters} means that we are not in Case (2), so we must be in Case (3), contradicting our
assumptions.

If we had started with a block of Case (2) initially, we would have arrived at a contradiction in similar fashion.

This proves that Case (3) must occur.
\end{proof}

By Lemma~\ref{lemma.existence of i} and its proof, there exists a $b$ such that $b \not \in \content(u)$,
$b\in \content(v)$ and $b-1$ is of Case (3). Remove the initially bracketed $(b-1,b)$-pair in $\content(uv)$.
Now it is not hard to check that all definitions and properties of the crystal operators on affine factorizations
of Sections~\ref{subsection.definition} and~\ref{subsection.properties} still go through with $x=b$
(and any braid or commutation relations still hold even with the $(b-1,b)$-pair present).
Hence we have crystal operators in the two factor case as well, even if $uv \not \in S_{\hat x}$ for any $x\in [n]$.

\begin{theorem}
\label{theorem.two factor}
For any $w\in \tilde S_n$ for which there is an affine factorization into two factors, $B(w)$ carries the 
structure of an $U_q(\mathfrak{sl}_2)$-crystal.
\end{theorem}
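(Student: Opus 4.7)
The plan is to reduce the two-factor case to the setting of Theorem~\ref{theorem.crystal}, which already guarantees the crystal structure whenever $w\in S_{\hat x}$ for some $x\in[n]$. If such an $x$ exists, then Theorem~\ref{theorem.crystal} applied with $\ell=2$ produces the desired $U_q(A_1)\cong U_q(\mathfrak{sl}_2)$-crystal structure directly. Assume therefore that $w\notin S_{\hat x}$ for every $x$. Lemma~\ref{lemma.existence of i} then provides, for every factorization $uv\in\mathcal{W}_w$, a Case (3) block whose top pair $(b-1,b)$ satisfies $b-1\in\content(u)$, $b\in\content(v)$, $b\notin\content(u)$, and $b-1\notin\content(v)$.

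Fix such a $b$ and declare the initially bracketed $(b-1,b)$-pair to be \emph{inert}: define $\et_1(uv)$ and $\ft_1(uv)$ by running the pairing algorithm of Section~\ref{subsection.afffac} on the reduced contents $\content(u)\setminus\{b-1\}$ and $\content(v)\setminus\{b\}$, both of which are subsets of $[n]\setminus\{b-1,b\}$, and then applying Definition~\ref{definition.crystal operators} with $x=b$. Reinserting $b-1$ into the resulting $\content(\tilde u)$ and $b$ into $\content(\tilde v)$ should produce an honest factorization of $w$ in $\mathcal{W}_w$ with the expected weight shift.

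The verifications needed are: (i) the analogues of Lemmas~\ref{lem:uvdecomp} and~\ref{lem:Ruvdecomp} continue to hold with the inert $(b-1,b)$-pair present, so that $\tilde u\tilde v=uv=w$; (ii) the operators do not depend on the choice of Case (3) pair when more than one is available; (iii) the analogue of Proposition~\ref{prop:stayput} persists, yielding the weight shift $\wt(\et_1(uv))=\wt(uv)+\alpha_1$, the mutual inverse relations $\ft_1\et_1=\et_1\ft_1=\id$ on the non-annihilated part, and the formulas $\varepsilon_1(uv)=|L_1(uv)|$ and $\varphi_1(uv)=|R_1(uv)|$. Granted these, the $U_q(\mathfrak{sl}_2)$-crystal axioms of Definition~\ref{definition.crystal} are immediate: axioms (1) and (2) restate the inverse and weight conditions, and axiom (3) reduces to
\[
\varphi_1(uv)-\varepsilon_1(uv)=|R_1(uv)|-|L_1(uv)|=\ell(u)-\ell(v)=\langle\alpha_1^\vee,\wt(uv)\rangle.
\]
The Stembridge cross-index axioms (P3)--(P6') are vacuous since there is only one color, and (P1)--(P2) follow from the inverse relation together with $\ell(w)<\infty$.

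I expect the main obstacle to be verification (i). The decompositions~\eqref{uvdecomp} and~\eqref{tuv} in the proof of Lemma~\ref{lem:uvdecomp} rely on commuting cyclically decreasing pieces past one another and on the braid move $(s_b s_{b-1}\cdots s_{b-t})(s_b\cdots s_{b-t+1})=(s_{b-1}\cdots s_{b-t})(s_b\cdots s_{b-t})$. With an inert Case (3) block present, the generators $s_{b-1}$ in $u$ and $s_b$ in $v$ sit among the pieces of $u$ and $v$ that are being rearranged, so one must check that the range of indices moved by the crystal operator (controlled by the maximal $t$ parameter of Definition~\ref{definition.crystal operators}) is disjoint from $\{b-1,b\}$. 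This follows from the maximality of $t$ defining the Case (3) block: extension upward or downward is obstructed precisely by $b\notin\content(u)$ and $b-1\notin\content(v)$, so the operator's active range stops short of the inert pair. Granted this separation, the braid and commutation relations required in the proofs of Lemmas~\ref{lem:uvdecomp} and~\ref{lem:Ruvdecomp} and of Proposition~\ref{prop:stayput} go through verbatim, and the rest of the argument proceeds as in the $S_{\hat x}$ case.
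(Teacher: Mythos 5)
Your proposal follows the paper's own route exactly: invoke Lemma~\ref{lemma.existence of i} to locate a Case (3) block, freeze the initially bracketed $(b-1,b)$-pair, and rerun the machinery of Sections~\ref{subsection.definition} and~\ref{subsection.properties} with $x=b$; the paper likewise reduces the theorem to the assertion that the definitions, the decomposition lemmas, and Proposition~\ref{prop:stayput} survive the presence of the inert pair, so your verification list (i)--(iii) is precisely the check the paper leaves to the reader. One small correction to your justification of the separation claim: maximality of the Case (3) block is obstructed by $b\notin\content(u)$ and $b-t\notin\content(v)$ (not $b-1\notin\content(v)$, which fails for $t\ge 2$), though the conclusion that the operator's active range avoids $\{b-1,b\}$ still holds, since $b-1$ is maximal in the order~\eqref{orderx} with $x=b$ and is excluded from the crystal pairing, while the downward run in $\content(u)$ must halt at $b+1$ because $b\notin\content(u)$.
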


A common generalization of Theorems~\ref{theorem.crystal} 
and~\ref{theorem.two factor} (a ``crystal theorem") for 
more general $w\in \tilde S_n$ would be extremely interesting.
Since a generic affine Stanley symmetric function does not have
a nonnegative Schur expansion,
such a theorem will not exist without 
generalizing the notion of crystal.
However, there are large classes of affine permutations 
for which the expansion is Schur positive (modulo a 
natural ideal).  These cases would encode as highest weights
invariants tied to the WZW Verlinde fusion algebra and positroid
decompositions
(discussed further in Section~\ref{section.gromov witten}).

\section{Young--Specht modules}
\label{section.specht}

The crystal $B(w)$ for $w\in S_n$ corresponds to representations carrying an action of the symmetric 
group called {\it Young--Specht modules} (also called Specht modules in~\cite{ReS:1998}).  These modules $\mathcal S^D$ are associated to 
finite subsets $D$ of $\mathbb N\times \mathbb N$ called diagrams.  
Their origin was in Young's work~\cite{Young:all} to explicitly produce the irreducible representations 
of the symmetric group.  He required only Ferrers diagrams, the graphical depiction
of a partition $\lambda=(\lambda_1,\ldots,\lambda_m)$ with
non-increasing positive integer entries obtained by stacking
rows of $\lambda_i$ boxes in the left corner (with its smallest row at the top).
Here $\ell(\lambda):=m$ is called the length of the partition $\lambda$.
The set of Young--Specht modules indexed by Ferrers diagrams $\lambda$, 
where $|\lambda|=\sum_{i=1}^m \lambda_i = \ell$, is a complete set of irreducible $S_\ell$-modules.

It has since been established that other subclasses of Young--Specht modules 
are fundamental as well.  For example, Young--Specht modules indexed by
skew-shaped diagrams give $\mathfrak {sl}_{\ell}$-representations,
and their decomposition as a direct sum of irreducible submodules 
\begin{equation}
\label{skewspecht}
	\mathcal S^{\nu/\lambda} = \bigoplus_\mu c_{\lambda,\mu}^\nu \,\mathcal S^\mu
\end{equation}
yields multiplicities $c_{\lambda,\mu}^\nu$ that are 
given by the acclaimed \textit{Littlewood--Richardson (LR) rule} 
(details to follow).

Another notable family consists of the Young--Specht modules indexed by Rothe
diagrams of permutations, defined uniquely for each $w\in S_n$ to be
$$
	D(w) = \{(i, w(j)) \mid 1 \leq  i < j \leq  n, w(i) > w(j)\}\,.
$$
Our primary goal in this section is to provide the crystal for these.
We also discuss how a subcase of our construction yields a new 
characterization of the $\mathfrak{sl}_{\ell}$-crystal \cite{KN:1994},
and we give a number of results concerning the decomposition of Young--Specht modules 
into their irreducible components.

Before we begin, recall that the definition of $\mathcal S^D$ requires 
{\it fillings} $f$ of $D$, which are bijections 
$f\colon D\mapsto \{1,\ldots,\ell\}$ where $\ell=|D|$.  The Young--Specht module 
carries a natural left action of $S_\ell$ on fillings by the permutation
of entries.  The row group $R(f)$ of a filling $f$ is the subgroup of permutations
$\sigma\in S_\ell$ which act on $f$ by permuting entries 
within their row and similarly, the column group $C(f)$ is the subgroup 
that permutes entries within their columns.  The Young symmetrizer of a 
filling $f$ is given by
$$
	y_f = \sum_{p\in R(f)}\sum_{q\in C(f)} \sign(q) qp
	\in \mathbb C[S_\ell]\,.
$$
\begin{definition}
For each diagram $D$ and filling $f$,
the Young--Specht module $\mathcal S^D$ is the $S_\ell$-module
$\mathbb C[S_\ell]y_f$, where $\ell=|D|$.
\end{definition}

\subsection{Young--Specht modules and crystals for skew shapes}

A foundational example in crystal theory is the $\mathfrak{sl}_{\ell}$-crystal~\cite{LS:1978,LS:1979,KN:1994} 
on skew tableaux which, by Schur--Weyl duality, can be
associated to the Young--Specht modules $\mathcal S^{D}$ for skew shapes $D$.
Our point of departure is to recall the crystal on tableaux and to show that it is 
a special case of $B(w)$ on affine factorizations. 

The vertices of the $\mathfrak{sl}_{\ell}$-crystal 
crystal $B(\nu/\lambda)$ consist of the {\it semi-standard skew tableaux}
$\SSYT(\nu/\lambda)$ over the alphabet $\{1,2,\ldots,\ell\}$. Here
$t\in\SSYT(\nu/\lambda)$ when it is a filling of the diagram $D=\nu/\lambda$ 
with letters placed non-decreasing 
across rows and increasing up columns.  Its {\it weight} is defined by 
the composition $\mu=(\mu_1,\ldots,\mu_\ell)$ where $\mu_i$ records the
number of times $i$ occurs in $t$.  

Crystal operators $\et_i$ and $\ft_i$ for
$1\le i< \ell$ are defined on $t\in\SSYT(\nu/\lambda)$ using a bracketing of 
the letters $i$ and $i+1$ in $t$.
Scan the columns of $t$ from right to left,
bottom to top. When a letter $i+1$ appears, pair it with the closest previously
scanned $i$ in this scanning order that has not yet been paired (if possible).
Then $\ft_i(t)$ is the skew tableau obtained from $t$ by changing the 
rightmost unpaired $i$ into an $i+1$. If none exists, $\ft_i(t) = \bf{0}$.
Similarly, $\et_i(t)$ is obtained from $t$ by changing the 
leftmost unpaired $i+1$ into an $i$ and if none exists, $\et_i(t) = \bf{0}$.

\begin{example}
\label{example.ef on skew}
In the following skew tableau,
bracketed letters $2$ and $3$ are indicated in red
and the crystal operators $\et_2$ and $\ft_2$ act on the letter in the bold box.
$$
 \tableau[scY]{\color{red}{3} \cr \bl & 1 &\color{red}{2} & 2& \color{red}{3}
 \cr \bl&\bl&1&1 &\color{red}{2} &\tf 3& 3}
 \quad
 \begin{matrix}
 \rightarrow \et_2\rightarrow\\
 \leftarrow \ft_2 \leftarrow
 \end{matrix}
 \quad
 \tableau[scY]{\color{red}{3} \cr \bl &  1 &\color{red}{2}&  2&\color{red}{3}
 \cr \bl&\bl&1&1 &\color{red}{2} &\tf 2& 3}
 $$
\end{example}

Recall that Theorem~\ref{theorem.highestwts} indicates that highest weights correspond 
to irreducible components. In this setting,
$t\in B(\nu/\lambda)$ is highest weight if $\et_i t=\bf{0}$ for all $1\le i< \ell$ and 
thus the multiplicities in \eqref{skewspecht} are given by the following combinatorial objects.

\smallskip
\noindent \textit{Crystal version of Littlewood--Richardson Rule.} 

\noindent
$c_{\lambda,\mu}^\nu$ is the number of all semi-standard skew tableaux
$t\in \SSYT(\nu/\lambda,\mu)$ of shape $\nu/\lambda$ and weight $\mu$ such that $t$ is highest weight.
\smallskip

\noindent
Although the first rigorous proof did not appear until 1977 \cite{Schuetz:1977},
this rule was originally formulated in 1934~\cite{LR:1934} by counting 
objects called \textit{Yamanouchi tableaux}.  In particular, the 
Littlewood--Richardson rule specifies that $c_{\lambda,\mu}^\nu$ counts 
the number of semi-standard tableaux of shape $\nu/\lambda$ 
and weight $\mu$ with a Yamanouchi row word (that is, reading 
right to left and bottom to top, there are never more $i+1$'s than 
$i$'s, for all $i$).
It is not hard to see that the condition of being Yamanouchi is equivalent 
to being highest weight in the crystal. 

The crystal $B(w)$ introduced in Section~\ref{section.crystal operators} reduces to 
the crystal $B(\nu/\lambda)$ when $w\in S_n\subset\tilde S_n$ is {\it $321$-avoiding}
-- that is, when none of its reduced words contain a
braid $s_i s_{i+1} s_i$.  This subclass of permutations is in bijection~\cite{BJS:1993} 
(see also~\cite[Theorem~2.3.1(i)]{Garsia:2002})
with skew shapes fitting inside a rectangle by removing all rows and columns without cells
from the Rothe diagram $D(w)$.

\begin{prop}
If $w\in S_n$ is $321$-avoiding, let $D(w)=\nu/\lambda$ be the corresponding skew shape.
As $U_q(\mathfrak{sl}_{\ell})$-crystals, $B(\nu/\lambda)$ over the alphabet $\{1,2,\ldots, \ell\}$
is isomorphic to $B(w)$ with $\ell$ factors.
\end{prop}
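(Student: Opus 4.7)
My plan is to build an explicit weight-preserving bijection $\Phi \colon \mathcal{W}_w \to \SSYT(\nu/\lambda)$ and check that it intertwines the crystal operators on both sides. Since $w \in S_n$, no reduced expression of $w$ uses $s_0$, so every factor in any length-additive factorization $w = w^\ell \cdots w^1$ lies in $S_{\hat 0}$. Under the cyclic order \eqref{orderx} with $x = 0$ this is just the ordinary order $n-1 > \cdots > 1$, and each cyclically decreasing factor is literally strictly decreasing.

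\textbf{Step 1 (the bijection).} Because $w$ is $321$-avoiding, a classical result of Billey--Jockusch--Stanley identifies $D(w)$, after deleting empty rows and columns, with the skew shape $\nu/\lambda$, and identifies reduced words of $w$ with the column reading words of standard Young tableaux of shape $\nu/\lambda$; equivalently, Edelman--Greene insertion reduces to ordinary RSK on $321$-avoiding reduced words. This extends to a weight-preserving bijection $\Phi$ between length-additive factorizations of $w$ into $\ell$ strictly decreasing factors and $\SSYT(\nu/\lambda)$ over $\{1,\ldots,\ell\}$: label the cell of $\nu/\lambda$ corresponding to a simple reflection $s_j$ by $i$ whenever $s_j$ occurs in $w^i$. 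Strict decrease inside each $w^i$ translates to strict increase up columns of $\Phi(w^\ell \cdots w^1)$ (since vertically adjacent cells of a skew shape have consecutive residues), and the freedom allowed within cyclically decreasing factors with equal residues translates to weak increase along rows.

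\textbf{Step 2 (matching the operators).} Fix adjacent factors $w^{r+1} w^r$ and perform the pairing with respect to $x = 0$. The rule ``pair the largest $b \in \content(w^{r+1})$ with the smallest $a > b$ in $\content(w^r)$, iterate in decreasing order of $b$'' translates under $\Phi$ to the column-by-column, bottom-to-top, right-to-left bracketing of letters $r+1$ against letters $r$ in $\Phi(w^\ell\cdots w^1)$, because residues of cells in a skew shape are constant along antidiagonals and strictly increase by one up each column. The integer $t$ appearing in Definition~\ref{definition.crystal operators} is precisely the height of the vertical run of $r+1$-labeled cells sitting above the cell being changed, so the $b \mapsto b - t$ shift encodes exactly the cascade that the tableau operator produces when converting the rightmost unbracketed $r+1$ into an $r$. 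Hence $\Phi \circ \et_r = \et_r \circ \Phi$ and $\Phi \circ \ft_r = \ft_r \circ \Phi$, and the proposition follows.

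\textbf{Main obstacle and alternative.} The delicate point is Step 2: aligning the cyclic residue pairing of Definition~\ref{definition.crystal operators} with the column-reading bracketing on skew tableaux, and in particular verifying that the $t$-shift faithfully implements the tableau's column cascade. If the direct verification is cumbersome, one can instead invoke Stembridge's uniqueness (Theorem~\ref{thm:stembridge}): by Theorem~\ref{theorem.crystal}, $B(w)$ is a $U_q(A_{\ell-1})$-crystal, $B(\nu/\lambda)$ is classically one, and so each decomposes into a disjoint union of irreducible highest weight components. It then suffices to exhibit a weight-preserving bijection between the highest weight elements of $B(w)$ and those of $B(\nu/\lambda)$; both sets are counted by the Littlewood--Richardson coefficient $c_{\lambda,\mu}^\nu$, and the restriction of $\Phi$ from Step 1 to highest weight elements supplies the required bijection, forcing $B(w) \simeq B(\nu/\lambda)$.
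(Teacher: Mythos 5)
Your overall strategy --- labeling each cell by its residue $j-i+\ell(\nu)$, reading off the factors of $w$ from the positions of the letters $1,\ldots,\ell$, and matching the two bracketing rules --- is exactly the route the paper takes. However, the justification you give for the key step (your Step 2) is wrong. The operator $\ft_r$ (resp.\ $\et_r$) on $\SSYT(\nu/\lambda)$ changes a \emph{single} entry of the tableau in place; there is no ``cascade,'' and since the cells containing a fixed letter $r+1$ form a horizontal strip there is no vertical run of $r+1$-labeled cells of height greater than one, so your description of $t$ as such a height is vacuous. The one observation that actually carries the proof --- and which the paper states explicitly --- is that $321$-avoidance forces $t=0$ (and likewise $s=0$). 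Indeed, if $t\ge 1$ then $b,b-1\in\content(w^{r+1})$ while $b\in\content(w^{r})$, and the rewriting of Lemma~\ref{lem:uvdecomp} uses the braid relation $(s_b s_{b-1})(s_b)=(s_{b-1})(s_b s_{b-1})$ inside a reduced word of $w$, contradicting $321$-avoidance. With $t=0$ the residue label moves \emph{unchanged} between adjacent factors, which is precisely the in-place change of a single tableau entry, and the intertwining of the operators follows. As written, your argument asserts a correspondence with a tableau operation that does not exist, so the central claim $\Phi\circ\et_r=\et_r\circ\Phi$ is not established.

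Your fallback via Stembridge uniqueness is sound in principle and genuinely different from the paper's proof: two crystals in this category are isomorphic if and only if their multisets of highest weights agree, and for $321$-avoiding $w$ the number of highest weight elements of weight $\mu$ on both sides equals $c^{\nu}_{\lambda,\mu}$, using $F_w=s_{\nu/\lambda}$ together with Corollary~\ref{corollary.a}. Two caveats: Corollary~\ref{corollary.a} appears after this proposition in the paper (it does not depend on it, so there is no circularity, only a forward reference), and you do not need to verify that $\Phi$ restricts to a bijection on highest weight elements --- equality of the counts already supplies the required bijection, whereas proving that $\Phi$ preserves highest weight elements would again require the bracketing comparison you have not yet made correct.
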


\begin{proof}
Identify each cell $(i,j)$ in a skew semi-standard tableau $t\in \SSYT(\nu/\lambda,\mu)$
with a label $j-i+\ell(\nu)$. From $t$ we are going to produce an affine factorization $w^\mu$ of $w$
of weight $\mu$. For each letter $1\le r\le \ell$ in $t$, record the labels of all letters $r$ from right to left in $t$. 
This yields a decreasing word $w^r$ which is the $r$-th factor from the 
end in $w^\mu=w^{\ell} \cdots w^1$.
It is not hard to see that the bracketing rules for letters $r$ and $r+1$ in $t$ are equivalent to
the bracketing rules in factors $r$ and $r+1$ in $w^\mu$. 
Then $\et_r$ transforms the leftmost unbracketed letter $r+1$ in $t$ to an $r$.
This corresponds precisely to moving the rightmost unbracketed label from the $(r+1)$-th factor to the
$r$-th factor in $w^\mu$. Since $w$ does not contain any braids, the label moves unchanged between the factors.
\end{proof}

\begin{example}
We label each cell $(i,j)$ in the skew tableau of Example~\ref{example.ef on skew} by $j-i+\ell(\nu)$:
$$
 \tableau[scY]{\color{red}{3}_1 \cr \bl & 1_3 &\color{red}{2}_4 & 2_5& \color{red}{3}_6
 \cr \bl&\bl&1_5&1_6 &\color{red}{2}_7 &\tf 3_8& 3_9}
$$
so that the factorization is $w^\mu = (9\textbf{8}{\color{red}61})({\color{red}7}5{\color{red}4})(653)$. The colored
letters are the bracketed ones and the bold entry is the label moved by $\et_2$.
\end{example}

\subsection{Young--Specht modules and crystals for Rothe diagrams}
For any affine permutation $\tilde{w}\in S_{\hat x}\subset \tilde S_n$, 
the crystal $B(\tilde{w})$ on affine factorizations of $\tilde{w}$ gives the structure of 
more general Young--Specht modules.  
Since $S_{\hat x}$ for any $x\in [n]$ is isomorphic to $S_n$, we can
associate a permutation $\tau_x(\tilde{w})$ in $S_n$  to each $\tilde{w}\in S_{\hat x}$
by shifting the generators of $\tilde{w}$ by $-x\mod n$. 
We thus define the diagram of $\tilde{w}$ to be
$$
	\tilde D(\tilde{w}) := D(\tau_x(\tilde{w})) \,.
$$
The crystal $B(\tilde{w})$ on affine factorizations of $\tilde{w}\in S_{\hat x}$ 
gives the structure of the modules $\mathcal S^{\tilde D(\tilde{w})}$.  

\begin{theorem}
\label{theorem.specht}
For any $\tilde w\in S_{\hat x}\subset \tilde S_n$ with $x\in [n]$,
the decomposition of the Young--Specht module $\mathcal S^{\tilde D(\tilde w)}$ into
irreducible submodules is 
\begin{equation}
\mathcal S^{\tilde D(\tilde w)} = \bigoplus_\lambda a_{\tilde w,\lambda}\, \mathcal S^\lambda\,,
\end{equation}
where the multiplicity $a_{\tilde w,\lambda}$ is the
number of highest weight factorizations in $\mathcal W_{\tilde w,\lambda}$.
\end{theorem}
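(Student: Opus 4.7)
The plan is to combine the crystal structure on $B(\tilde w)$ from Theorem~\ref{theorem.crystal} with the classical identification of the Frobenius characteristic of the Young--Specht module $\mathcal{S}^{D(w)}$ for $w\in S_n$ with the Stanley symmetric function $F_w$, due to Kra\'skiewicz--Pragacz and given a combinatorial proof by Reiner--Shimozono.

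First I would reduce to the case $\tilde w\in S_n$. Since $S_{\hat x}$ is isomorphic to $S_n$ via the shift $\tau_x$ of generator indices by $-x\bmod n$, and since the pairing in Definition~\ref{definition.crystal operators} is carried out with respect to the cyclic order~\eqref{orderx} centered at $x$, applying $\tau_x$ to every factor of every affine factorization of $\tilde w$ converts the $S_{\hat x}$-pairing at $x$ into the ordinary pairing for the finite permutation $\tau_x(\tilde w)\in S_n$. This gives an isomorphism of edge-colored directed graphs $B(\tilde w)\cong B(\tau_x(\tilde w))$ that is weight-preserving, and by definition $\tilde D(\tilde w)=D(\tau_x(\tilde w))$, so it suffices to treat $\tilde w\in S_n$.

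Next, by Theorem~\ref{theorem.crystal}, $B(\tilde w)$ is a $U_q(A_{\ell-1})$-crystal in the integrable highest weight category, so Theorem~\ref{theorem.highestwts} decomposes it as a disjoint union of connected components, one $B(\lambda)$ for each highest weight factorization of shape (reordered) $\lambda$. Since the character of $B(\lambda)$ is the Schur polynomial $s_\lambda(x_1,\ldots,x_\ell)$, and since by the defining formula~\eqref{affstan} the weight generating function of $B(\tilde w)$ is the Stanley symmetric function $F_{\tilde w}$, summing over components yields
\[
F_{\tilde w}(x_1,\ldots,x_\ell)=\sum_\lambda a_{\tilde w,\lambda}\,s_\lambda(x_1,\ldots,x_\ell),
\]
where $a_{\tilde w,\lambda}$ counts highest weight factorizations of $\tilde w$ of weight $\lambda$. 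Invoking the Kra\'skiewicz--Pragacz / Reiner--Shimozono identity $\mathrm{ch}(\mathcal S^{D(\tilde w)})=F_{\tilde w}$ and the fact that the Frobenius characteristic of $\mathcal S^\lambda$ is $s_\lambda$ (together with the linear independence of the Schur basis), the claimed decomposition $\mathcal S^{\tilde D(\tilde w)}=\bigoplus_\lambda a_{\tilde w,\lambda}\,\mathcal S^\lambda$ follows.

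The main obstacle is step one: carefully verifying that the shift $\tau_x$ truly intertwines the crystal operators of Definition~\ref{definition.crystal operators}, i.e.\ that reindexing every residue by $-x\bmod n$ turns the cyclic ordering~\eqref{orderx} into the standard linear order used for $S_n$ factorizations, and that cyclically decreasing words and the $uv$-pairing transport accordingly. Once this reduction is in hand the remaining ingredients—the crystal isomorphism theorem for $A$-regular graphs and the Kra\'skiewicz--Pragacz Frobenius identification—do the rest of the work.
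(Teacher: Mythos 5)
Your proof is correct and follows essentially the same route as the paper: identify $\operatorname{char}(\mathcal S^{\tilde D(\tilde w)})$ with $F_{\tilde w}$ via the Kra\'skiewicz--Pragacz/Reiner--Shimozono result, then read off the Schur multiplicities from the highest weight elements of the crystal $B(\tilde w)$ supplied by Theorems~\ref{theorem.crystal} and~\ref{theorem.highestwts}. The only remark is that the ``main obstacle'' you flag is not actually needed: Theorem~\ref{theorem.crystal} already gives the crystal structure directly for any $\tilde w\in S_{\hat x}$, so all the shift $\tau_x$ must supply is the immediate equality of weight generating functions $F_{\tilde w}=F_{\tau_x(\tilde w)}$, not a full crystal isomorphism $B(\tilde w)\cong B(\tau_x(\tilde w))$.
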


\begin{proof}
For $v\in S_n$, it can be deduced from results in~\cite{KP:2004,ReS:1995,ReS:1998}
that the Frobenius characteristic of $\mathcal S^{D(v)}$ 
is the Stanley symmetric function $F_v(x)$, where we recall these can
be viewed as a special case of the functions in \eqref{affstan}.
Therefore, if $\tilde w\in {S}_{\hat x}\subset \tilde S_n$ for some $x\in[n]$,  
we have that 
\begin{equation}
\label{Fchar}
\operatorname{char}(\mathcal S^{\tilde D(\tilde w)}) = F_{\tau_x(\tilde{w})} = F_{\tilde w}\,.
\end{equation}
By Theorem~\ref{theorem.crystal}, $B(\tilde w)$ is a $U_q(\mathfrak{sl}_{\ell})$-crystal 
in the category of integrable highest weight crystals. Hence by Theorem~\ref{theorem.highestwts},
the irreducible components are in one-to-one correspondence with highest weight vectors. 
Selecting the highest weight vectors of weight $\lambda$ yields the result.
\end{proof}

From the previous result and Theorem~\ref{theorem.two factor}, 
the statement can instead be interpreted on the level of 
symmetric functions by recalling that the Schur functions 
$s_\lambda$ are characters
of the irreducible Young modules $\mathcal S^\lambda$.

\begin{corollary}
\label{corollary.a}
For any $\tilde w\in S_{\hat x}\subset\tilde S_n$ with $x\in [n]$,
or for any $\tilde w\in \tilde S_n$ and $\ell(\lambda)\leq 2$, 
the coefficient $a_{\tilde w,\lambda}$ in
\begin{equation}
\label{equation.F schur expansion}
    F_{\tilde w}(x) = \sum_\lambda a_{\tilde w,\lambda}\, s_\lambda
\end{equation}
enumerates the highest weight factorizations in $\mathcal W_{\tilde w,\lambda}$.
\end{corollary}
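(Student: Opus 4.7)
The plan is to separate the two hypotheses and, in each case, turn a structural result (Specht module decomposition or $\mathfrak{sl}_2$-crystal structure) into a Schur expansion of $F_{\tilde w}$ via Frobenius characters.

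For the first case, $\tilde w\in S_{\hat x}$, I would apply Theorem~\ref{theorem.specht} and take Frobenius characteristic of both sides of the decomposition $\mathcal S^{\tilde D(\tilde w)}=\bigoplus_\lambda a_{\tilde w,\lambda}\,\mathcal S^\lambda$. Using the classical identity $\operatorname{ch}(\mathcal S^\lambda)=s_\lambda$ together with the identification $\operatorname{ch}(\mathcal S^{\tilde D(\tilde w)})=F_{\tilde w}$ recorded in~\eqref{Fchar}, the expansion $F_{\tilde w}=\sum_\lambda a_{\tilde w,\lambda}s_\lambda$ falls out immediately, with $a_{\tilde w,\lambda}$ interpreted as the number of highest weight factorizations in $\mathcal W_{\tilde w,\lambda}$ by Theorem~\ref{theorem.specht}.

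For the second case, $\tilde w\in\tilde S_n$ arbitrary and $\ell(\lambda)\le 2$, I cannot appeal to Theorem~\ref{theorem.specht} since $\tilde w$ may not lie in any $S_{\hat x}$. Instead, the idea is to use the 2-factor $U_q(\mathfrak{sl}_2)$-crystal $B(\tilde w)$ provided by Theorem~\ref{theorem.two factor}. By definition~\eqref{affstan}, the two-variable restriction
\[
F_{\tilde w}(x_1,x_2)=\sum_{w^2 w^1\in\mathcal W_{\tilde w}} x_1^{\ell(w^1)}x_2^{\ell(w^2)}
\]
is exactly the character of $B(\tilde w)$ with two factors. By Theorem~\ref{theorem.highestwts}, $B(\tilde w)$ decomposes into connected $\mathfrak{sl}_2$-components indexed by highest weight vectors, and each component of highest weight $\lambda=(\lambda_1,\lambda_2)$ contributes $s_\lambda(x_1,x_2)$ to the character. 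Collecting components by highest weight gives
\[
F_{\tilde w}(x_1,x_2)=\sum_{\ell(\lambda)\le 2} b_{\tilde w,\lambda}\, s_\lambda(x_1,x_2),
\]
where $b_{\tilde w,\lambda}$ is the number of highest weight 2-factor factorizations of weight $\lambda$.

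The last step, and the only place where one needs a small argument, is to identify $b_{\tilde w,\lambda}$ with the coefficient $a_{\tilde w,\lambda}$ in the ambient Schur expansion~\eqref{equation.F schur expansion}. This will follow from the standard fact that $s_\mu(x_1,x_2)=0$ whenever $\ell(\mu)>2$: specializing~\eqref{equation.F schur expansion} to two variables kills every Schur summand with more than two rows, so the two displays above match term by term for $\ell(\lambda)\le 2$, forcing $a_{\tilde w,\lambda}=b_{\tilde w,\lambda}$. The main (mild) obstacle is just making this specialization argument cleanly, since the highest weight factorizations counted by $b_{\tilde w,\lambda}$ are \emph{a priori} 2-factor highest weight vectors, whereas the corollary is phrased in terms of highest weight factorizations in the full $\ell$-factor setup; the two notions coincide for $\lambda$ of length at most $2$ because any highest weight factorization of weight $\lambda$ must have all but two factors trivial and can then be read as a highest weight element of the 2-factor crystal.
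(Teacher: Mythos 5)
Your proposal is correct and follows essentially the same route as the paper: the paper derives this corollary in one sentence from Theorem~\ref{theorem.specht} (via the Frobenius characteristic identity~\eqref{Fchar} and $\operatorname{ch}(\mathcal S^\lambda)=s_\lambda$) together with Theorem~\ref{theorem.two factor} for the two-row case. Your two-variable specialization argument simply makes explicit the step the paper leaves implicit, and your closing remark correctly resolves the only subtlety, namely that for $\ell(\lambda)\le 2$ the highest weight condition is read in the two-factor $U_q(\mathfrak{sl}_2)$-crystal.
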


\begin{example}
Example~\ref{example.crystal} shows that the crystal $B(w)$ of type $A_2$ for $w=s_3 s_4 s_1 s_2\in S_5$
has two highest weight vectors, one of weight $(2,1,1)$ and one of weight $(2,2)$,
matching the Schur expansion of the Stanley symmetric
function $F_{s_3s_4s_1s_2} = s_{(2,2)} + s_{(2,1,1)}$:
\begin{verbatim}
  sage: W = WeylGroup(['A',4],prefix='s')
  sage: w = W.from_reduced_word([3,4,1,2])
  sage: Sym = SymmetricFunctions(QQ)
  sage: s = Sym.schur()
  sage: s(w.stanley_symmetric_function())
  s[2, 1, 1] + s[2, 2]
\end{verbatim}
\end{example}

For $w\in S_n$, the coefficient $a_{w,\lambda}$ of a Schur function $s_\lambda$ 
in $F_w(x)$ was previously characterized by Fomin and 
Greene; they proved~\cite[Theorem 1.2]{FG:1998} that $a_{w,\lambda}$ 
counts the number of semi-standard tableaux of shape $\lambda'$ (the transpose 
of $\lambda$) whose column-reading word is a reduced word of $w$.  
Corollary~\ref{corollary.a}
thus implies such tableaux are in bijection with highest weight factorizations.

\begin{corollary}
\label{corollary.a bij}
For any permutation $\tilde w\in S_{\hat x}\subset \tilde S_{n}$ and partition $\lambda$ with $\ell(\lambda)\le \ell$,
the cardinality of the set
\[
	\{v^\ell \cdots v^1 \in \mathcal W_{\tilde w,\lambda} \mid \et_i (v^\ell \cdots v^1) = \bf{0} 
\text{ for all $1\le i<\ell$}\}
\]
of highest weight factorizations equals the number of
semi-standard tableaux of shape $\lambda'$ 
whose column-reading word is a reduced word of $\tilde w$.
\end{corollary}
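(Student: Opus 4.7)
The plan is to combine Corollary~\ref{corollary.a} with the classical result of Fomin and Greene~\cite[Theorem~1.2]{FG:1998} via the isomorphism $\tau_x \colon S_{\hat x} \xrightarrow{\sim} S_n$ introduced in Section~\ref{section.specht}.

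First, by Corollary~\ref{corollary.a}, the number of highest weight factorizations in $\mathcal W_{\tilde w,\lambda}$ equals the coefficient $a_{\tilde w,\lambda}$ of $s_\lambda$ in the affine Stanley symmetric function $F_{\tilde w}$. By \eqref{Fchar}, we have $F_{\tilde w} = F_{\tau_x(\tilde w)}$, so $a_{\tilde w,\lambda} = a_{\tau_x(\tilde w),\lambda}$. The Fomin--Greene theorem, applied to the finite permutation $\tau_x(\tilde w)\in S_n$, identifies $a_{\tau_x(\tilde w),\lambda}$ with the number of semi-standard tableaux of shape $\lambda'$ whose column-reading word is a reduced word of $\tau_x(\tilde w)$.

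It then suffices to exhibit a bijection between reduced words of $\tau_x(\tilde w)$ and reduced words of $\tilde w$ that intertwines with the column-reading-word condition on tableaux. Such a bijection is given by the shift $i \mapsto i + x \pmod n$, sending generators $s_i$ of $S_n \cong S_{\hat x}$ to the corresponding generators of $\tilde S_n$ indexed by $[n]\setminus\{x\}$; since this shift is an isomorphism of Coxeter groups, it takes reduced words to reduced words. Under this shift, a semi-standard tableau in the usual alphabet $\{1,\ldots,n-1\}$ transforms into a tableau filled with entries from $[n]\setminus\{x\}$ arranged according to the cyclic order \eqref{orderx}; the semi-standard condition (weakly increasing rows, strictly increasing columns) is preserved because it is a purely order-theoretic condition on a linearly ordered alphabet.

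The main (albeit mild) obstacle is bookkeeping: one needs to verify carefully that the column-reading word of a shifted tableau really is the shift of the column-reading word of the original tableau, and hence a reduced word of $\tilde w$ exactly when the original is a reduced word of $\tau_x(\tilde w)$. This is immediate from the fact that the column-reading procedure depends only on the positions of cells, not on the labels themselves. Composing the two bijections yields the claimed equality of cardinalities.
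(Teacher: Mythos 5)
Your proposal is correct and follows essentially the same route as the paper, which also derives this corollary by combining Corollary~\ref{corollary.a} with the Fomin--Greene characterization of $a_{w,\lambda}$; the paper simply leaves implicit the transfer between $S_{\hat x}$ and $S_n$ via $\tau_x$ that you spell out. (The paper additionally notes that Theorem~\ref{theorem.intertwine} later upgrades this counting argument to an explicit bijection, but that is not needed for the corollary itself.)
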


As we will show in Theorem~\ref{theorem.intertwine}, this result can be proved bijectively
by extending the Edelman--Greene (EG) correspondence~\cite{EG:1987} between 
reduced words for $w\in S_n$ and pairs of certain same-shaped row 
and column increasing tableaux $(P,Q)$.  In fact, the bijection applies 
to the full crystal rather than just highest weight elements.

The basic operation needed for the EG-correspondence is a
variant of RSK-insertion.  Namely, the EG-insertion of letter 
$a$ into row $r$ of a tableau is defined by picking out the 
smallest letter $b>a$ in row $r$. If no such $b$ exists, the letter $a$ is placed
at the end of row $r$.
If $b=a+1$ and $a$ is also contained in row $r$, then $a+1$ is inserted 
into row $r+1$.  Otherwise, $b$ is replaced by $a$, and $b$ is inserted 
into row $r+1$.  In the last two cases, we say that $b$ has been bumped.  
This given, an insertion tableau $P$ and recording tableau $Q$
are constructed from a reduced word $w_\ell\cdots w_2 w_1$ 
starting from $P^0=Q^0=\emptyset$ and
iteratively defining $P^i$ by inserting $w_i$ into the bottom row 
of $P^{i-1}$.  Letters are bumped until a letter $a$ is to be 
inserted into a row $r$ containing no letter larger than $a$,
at which point $a$ is put at the end of row $r$.
$Q^{i}$ is then defined by adding $i$ to the end 
of row $r$ in $Q^{i-1}$. Finally, $P=P^{\ell}$ and $Q=Q^{\ell}$.

\begin{theorem}\cite{EG:1987}
\label{theorem:eg}
Each reduced word for $w\in S_n$ corresponds to a unique pair of tableaux
$(P,Q)$ of the same shape, where the column reading of the transpose of $P$
is a reduced expression for $w$ and $Q$ is standard.
\end{theorem}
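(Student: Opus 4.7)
The plan is to proceed by induction on the length $\ell$ of the reduced word $w_\ell \cdots w_2 w_1$ of $w\in S_n$, verifying that EG-insertion produces a valid tableau pair and that each step is reversible. Throughout the induction I would maintain the invariants that $P^i$ is weakly increasing along rows and strictly increasing up columns, that the column reading of the transpose of $P^i$ is a reduced word for $s_{w_1}\cdots s_{w_i}$, and that $Q^i$ is a standard Young tableau of the same shape as $P^i$. The base case $i=0$ is immediate with $P^0=Q^0=\emptyset$, and termination of each insertion step is guaranteed because each bump strictly increases the row index of the active letter while the total number of rows ever occupied is bounded by $\ell$.

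The essential content of the inductive step is a row-insertion lemma: when $a$ is inserted into row $r$ and bumps $b$, the word consisting of row $r$ read appropriately followed by $a$ is Coxeter-equivalent, and of the same length, as the word consisting of the modified row $r$ followed by $b$. When $b>a+1$, or when $a$ is absent from the row, the argument mirrors the classical RSK case and uses only the commutation relations $s_i s_j = s_j s_i$ for $|i-j|>1$ to slide the exchanged letter into position. The distinguishing case is $b=a+1$ with $a$ already present in row $r$, where EG-insertion refuses to swap and instead sends $a+1$ up to row $r+1$; here the braid relation $s_a s_{a+1} s_a = s_{a+1} s_a s_{a+1}$ is precisely what keeps the resulting product reduced and equal to the intended element. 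Propagating these local identities up through every bumped row yields the required invariant for $P^i$, and appending the new cell to $Q^{i-1}$ preserves standardness by construction.

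To invert, given a pair $(P,Q)$ of the same shape with $Q$ standard, I would locate the cell containing the largest entry of $Q$; the entry of $P$ in this cell is the one inserted most recently, and reverse bumping it row by row down to the bottom row recovers $w_\ell$ together with the pair $(P^{\ell-1},Q^{\ell-1})$. Well-definedness of the reverse bump is the same row-insertion lemma read backward, and iterating retrieves the full reduced word. Forward and reverse EG-insertion are mutual inverses by construction, which establishes the claimed bijection.

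The main obstacle is the row-insertion lemma. Reducedness is fragile: a naive swap in the braid case would introduce an $s_a s_a$ and collapse the length, so the EG rule of promoting rather than swapping must be shown to exactly avoid this failure. Verifying this requires a careful case analysis over the positions of $a$, $a+1$, and the bumped letter inside row $r$, and checking that only commutation relations and the single braid relation $s_a s_{a+1} s_a = s_{a+1} s_a s_{a+1}$ are ever invoked; no other Coxeter move is needed, and none is hidden. Once this lemma is in hand, the remainder of the proof is bookkeeping on shapes and recording tableaux.
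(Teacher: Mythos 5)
The paper does not prove this statement: Theorem~\ref{theorem:eg} is quoted from Edelman and Greene \cite{EG:1987} and used as a black box, so there is no internal proof to compare yours against. Judged on its own, your outline is the standard insertion-based argument from that reference, and the overall architecture --- induction on the word, a local row-insertion lemma asserting Coxeter equivalence and length preservation at each bump, and invertibility via reverse bumping from the cell holding the largest entry of $Q$ --- is the right one. Two points keep it from being a proof. First, the invariant you carry for $P^i$ is stated too weakly: the rows of an Edelman--Greene insertion tableau are \emph{strictly} increasing, and it is precisely this strictness (no repeated letter in a row) that the special case $b=a+1$ with $a$ already present in the row is designed to protect; with only weak row-increase the induction does not close, since a repeated letter in a row would put $s_a s_a$ into the reading word and destroy reducedness. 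Second, the row-insertion lemma, which you correctly identify as carrying all the content, is only described, not proved; the case analysis over the relative positions of $a$, $a+1$, and the bumped letter, together with the reading-order bookkeeping (the paper inserts the word from right to left, so after $i$ steps the tableau must represent $s_{w_i}\cdots s_{w_1}$, not $s_{w_1}\cdots s_{w_i}$), is exactly where such arguments go wrong. As a plan this is sound; as written it is a sketch of \cite{EG:1987} rather than a self-contained proof.
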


For $\tilde w\in S_{\hat x}$, we more generally define a map on $\mathcal W_{\tilde w,\alpha}$
where
$$
\varphi_{\operatorname{EG}} \colon v^\ell \cdots v^1 \mapsto (P,Q)\,,
$$
for an appropriate pair of tableaux $(P,Q)$ with $Q$ now semi-standard of weight $\alpha=(\alpha_1,\ldots,\alpha_\ell)$.
In particular, let $P^0=Q^0=\emptyset$ and define $P^i$, for $i=1,\ldots,\ell$, 
by EG-inserting into $P^{i-1}$ the word $w_{\alpha_i} \cdots w_1$, where
$\content(v^i)=\{w_1,\ldots,w_{\alpha_i}\}$ and $w_{\alpha_i}>\cdots>w_1$ under the order of \eqref{orderx}.
$Q^i$ is defined by adding letter $i$ to $Q^{i-1}$ in cells
given by $\shape(P^{i})/\shape(P^{i-1})$.
Set $\varphi_{\operatorname{EG}}^Q(v^\ell \cdots v^1)=Q$.

\begin{remark}
\label{remark:eg}
EG-insertion enjoys many of the same properties as RSK-insertion.
For example, given that cell $c_y$ is added to a tableau when $y$ 
is EG-inserted, and cell $c_x$ is added when $x$ is then EG-inserted 
into the result, $c_x$ lies strictly east of $c_y$ when $x>y$, and 
$c_x$ lies strictly higher than $c_y$ when $x<y$. 
\end{remark}

\begin{theorem}
\label{theorem.intertwine}
For any $\tilde w\in S_{\hat x}\subset \tilde S_{n}$, 
the map $\varphi_{\operatorname{EG}}^Q$ is a crystal isomorphism
 \[
	B(\tilde w)\cong \bigoplus_\lambda B(\lambda)^{\oplus a_{\tilde w,\lambda}}\;.
\]
In particular,
\[
	\varphi_{\operatorname{EG}}^Q \circ \et_i = \et_i \circ \varphi_{\operatorname{EG}}^Q 
	\qquad \text{and} \qquad
	\varphi_{\operatorname{EG}}^Q \circ \ft_i = \ft_i \circ \varphi_{\operatorname{EG}}^Q.
\]
\end{theorem}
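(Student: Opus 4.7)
The plan is to prove the intertwining identity $\varphi_{\operatorname{EG}}^Q \circ \et_i = \et_i \circ \varphi_{\operatorname{EG}}^Q$ directly; the corresponding identity for $\ft_i$ then follows from Proposition~\ref{prop:stayput}(4), while the direct sum decomposition and multiplicities $a_{\tilde w,\lambda}$ follow immediately from Theorems~\ref{theorem.highestwts} and~\ref{theorem.specht} together with the bijectivity of $\varphi_{\operatorname{EG}}$ supplied by Theorem~\ref{theorem:eg} (extended from reduced words to factorizations by recording the factor index in $Q$).

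Step 1 is a reduction to the two-factor case. The operator $\et_r$ alters only $v^r$ and $v^{r+1}$, and preserves their product $v^{r+1}v^r$. Because EG-insertion depends only on the underlying permutation and not on the reduced word chosen, the intermediate tableau $P^{r+1}$ is unchanged by $\et_r$, while $P^{r-1}$ and each subsequent $P^{s}$ for $s>r+1$ are obviously unchanged as well. Hence every cell of $Q$ outside labels $\{r,r+1\}$ is preserved, and the problem reduces to the two-factor statement: EG-inserting $\tilde u\tilde v:=\et_1(uv)$ into $P^{r-1}$ produces the same shape growth as inserting $uv=v^{r+1}v^r$, but with the leftmost unbracketed $r+1$-cell relabeled to an $r$-cell in $Q$.

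Step 2 is a block-by-block comparison using Lemma~\ref{lem:uvdecomp}. The decomposition $u = u_1(s_b\cdots s_{b-t})u_2$ and $v = v_1(s_b\cdots s_{b-t+1})v_2$, with $b=\min L_1(uv)$, splits $uv$ into two paired portions $(u_1,v_1)$ and $(u_2,v_2)$ together with an unpaired central block; the modified $\tilde u\tilde v$ shares the same paired portions but trades the residue $b\in\content(u)$ for the residue $b-t\in\content(v)$. The paired portions contribute identical bumping chains in both insertions and produce matching $(r,r+1)$-pairs in $Q$ which the column-reading rule brackets together, by Remark~\ref{remark:eg}. Only the cells arising from the unpaired central block differ in label, and I plan to show that under $uv$ the unpaired block contributes exactly one $r+1$-cell that is leftmost among unbracketed $r+1$'s in $Q$, whereas under $\tilde u\tilde v$ the same cell receives label $r$, matching the tableau operator $\et_r$.

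The principal obstacle is verifying this last localization precisely. I plan to induct on $t$. When $t=0$, a single unpaired residue $b$ with $b+1\notin\content(v)$ is EG-inserted after all paired insertions and, by the east/up placement rule of Remark~\ref{remark:eg}, its cell is manifestly the leftmost unbracketed $r+1$-cell. For $t\ge 1$, I intend to exploit the Coxeter rearrangement $(s_b\cdots s_{b-t})(s_b\cdots s_{b-t+1})=(s_{b-1}\cdots s_{b-t})(s_b\cdots s_{b-t})$ appearing in the proof of Lemma~\ref{lem:uvdecomp}, together with the invariance of the EG-insertion tableau $P$ under braid and commutation moves, to peel off a matched reflection from both $u$ and $v$ and reduce the unpaired block from length $t$ to $t-1$. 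This reduces the claim to the inductive hypothesis, completing the two-factor analysis and hence the intertwining identity.
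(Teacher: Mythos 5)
Your overall strategy --- reduce to two adjacent factors, invoke Lemma~\ref{lem:uvdecomp} to isolate the paired and unpaired blocks, and argue that exactly one label of $Q$ changes --- is the same as the paper's. However, there is a genuine gap in the step on which everything else rests. You justify the invariance of $P^{r+1}$ under $\et_r$ by asserting that EG-insertion ``depends only on the underlying permutation and not on the reduced word chosen,'' and in your induction on $t$ you invoke ``invariance of the EG-insertion tableau $P$ under braid and commutation moves.'' Both assertions are false. The EG insertion tableau is an invariant of the Coxeter--Knuth class of a reduced word, not of the permutation; distinct Coxeter--Knuth classes of the same permutation yield distinct $P$ (this is precisely why $F_w$ can have several Schur components). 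Concretely, for $w=s_1s_3$ the two insertion orders $3,1$ and $1,3$ produce $P$ of shapes $(1,1)$ and $(2)$ respectively, so a single commutation of two letters differing by at least $2$ already changes $P$; the Coxeter--Knuth commutation moves under which $P$ \emph{is} invariant require a witness letter lying between the two swapped letters. This is not a hypothetical worry in your setting: the factorizations $(s_1)(s_3)$ and $(s_3)(s_1)$ of the same $w$ with the same number of factors have different insertion tableaux, so ``same permutation $v^{r+1}v^r$'' genuinely does not determine $P^{r+1}$.

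What is true --- and is the actual content to be proved --- is that the \emph{particular} rewriting of $v^{r+1}v^r$ effected by $\et_r$ preserves the insertion tableau and moves exactly one cell label in $Q$. The paper establishes this not by any word-invariance principle but by tracking insertion paths directly: Remark~\ref{remark:eg} controls the east/north placement of successive cells, and Lemma~\ref{lem:Ruvdecomp} locates the index $m$ splitting $\content(v^{r+1})$ so that the insertion paths of the bracketed letters stay weakly to the left of the path of $x_{j-1}$, no letter can bump the unpaired letter $a=x_j$, and after the move the displaced paths shift only within the existing cells, so that precisely one previously unpaired label in $Q$ changes. Your induction on $t$ via the Coxeter rearrangement could in principle be repaired, but at each peeling step you would have to verify that the rearrangement is realized by Coxeter--Knuth moves (the braid relation is one, but the commutations you use are not automatically so), which amounts to redoing the insertion-path analysis. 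As written, the argument does not go through.
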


\begin{proof}
Fix $\tilde w\in S_{\hat x}$ for some $x\in[n]$.
We first note that $\varphi_{\operatorname{EG}}$ is a bijection
between $\mathcal W_{\tilde w,\alpha}$ and the set of pairs of same-shaped
tableaux $(P,Q)$ where the column-reading word of the transpose of $P$ is a reduced
expression for $\tilde w$ and $Q$ is semi-standard of weight $\alpha$.
That is, given $v^\ell\cdots v^1\in \mathcal W_{\tilde w,\alpha}$,
let $(P,Q)=\varphi_{\operatorname{EG}}(v^\ell\cdots v^1)$ and
recall that $P=P^\ell$ where $P^\ell$ is defined by 
inserting the (distinct) letters of $\content(v^{\ell})$ from smallest to 
largest into $P^{\ell-1}$. By Remark~\ref{remark:eg},
$Q^\ell/Q^{\ell-1}$ is a horizontal $\ell(v^\ell)$-strip
and we iteratively find $Q$ to be semi-standard of weight $\alpha$. 
The column reading word of the transpose of $P$ is a reduced expression for $\tilde w$
by Theorem~\ref{theorem:eg}.  It is not difficult to see that the process 
is invertible  by reverse EG-bumping letters from $P^i$ that lie in the 
positions determined by cells of $\shape(Q^i)/\shape(Q^{i-1})$ taken from 
right to left.

Let us now denote the letters  in $\content(v^{i+1})$ by $y_{\alpha_{i+1}} \cdots y_1$ and the letters
in $\content(v^i)$ by $x_{\alpha_i} \cdots x_1$ in the order in~\eqref{orderx}.
Let $a=x_{j}$ be the leftmost unbracketed letter in the pairing in Section~\ref{subsection.definition}.
Inserting the letters $x_1,\ldots,x_{\alpha_i}$ under the EG-insertion yields $\alpha_i$ insertion paths that move
strictly to the right in the tableaux $P^i$ by Remark~\ref{remark:eg}. Since $a=x_{j}$ is the leftmost unbracketed  letter in 
$\content(v^i)$, by Lemma~\ref{lem:Ruvdecomp} there exists an index $1\le m \le \alpha_{i+1}$ such that
$x_j<y_m<y_{m+1}< \cdots < y_{\alpha_{i+1}}$ and $y_1<y_2 < \cdots < y_{m-1}<x_{j-1}$ in the order~\eqref{orderx}.
In addition, all letters $y_1,\ldots, y_{m-1}$ are bracketed under the crystal bracketing which means that
the insertion paths of these letters are weakly to the left of the insertion path of $x_{j-1}$ and no letter can bump $x_j$.
Also, the letters $x_{j+1},\ldots,x_{\alpha_i}$ are bracketed under the crystal bracketing so that of the letters $i$ in 
$Q^i$ corresponding to the insertion paths of $x_j,\ldots,x_{\alpha_i}$ precisely
one is not bracketed with an $i+1$ in $Q^{i+1}$.

Now under $\ft_i$ the letter $a=x_j$ moves from $\content(v^i)$ to the letter $a+s$ in 
$\content(v^{i+1})$. As a result, the insertion paths of $x_{j+1},\ldots, x_{\alpha_i}$ either stay (partially)
in their old track or move left (partially) to the insertion of the previously inserted letter.
Similarly, the insertion paths of the corresponding $y_h$ move (partially) left. The new letter $a+s$
in $v^{i+1}$ after the application of $\ft_i$, then causes the previously unpaired letter $i$ in $Q^{i+1}$
in the insertion to become an $i+1$, possibly by shifting the insertion paths of the subsequent 
$y_h$ to the right. This proves the claim for $\ft_i$.

The proof for $\et_i$ is similar.
\end{proof}

A by-product of Theorem~\ref{theorem.intertwine} is a bijective proof of
Corollary~\ref{corollary.a bij}, where
the tableau associated to a highest weight element
$v^\ell \cdots v^1$ is the transpose of the insertion tableau 
$\varphi_{\operatorname{EG}}^P(v^\ell \cdots v^1)=P$.
It further gives a crystal theoretic analogue of the relation between the 
Edelman--Greene insertion of a reduced word of $w\in S_n$ and 
the RSK insertion of its peelable word given in~\cite{ReS:1995}.

\begin{example}
Given the highest weight factorization $v^3v^2v^1=(1)(2)(32)$,
with weight $\lambda=(2,1,1)$ of the permutation $s_1s_2s_3s_2\in S_4$,
the successive insertion of $v^i$, for $i=1,2,3$ yields
\[
	\bigl( \;{\footnotesize\tableau[scY]{2&3}\;,\; \tableau[scY]{1&1}} \;\bigr) \qquad
\left(\;{\footnotesize\tableau[scY]{3\cr 2&3}\;,\; \tableau[scY]{2\cr 1&1}}\;\right)\qquad
\left( \;{\footnotesize\tableau[scY]{3\cr 2\cr 1&3} \;,\; \tableau[scY]{3\cr 2\cr 1&1}}
\;\right)\;=\; (P,Q)\;.
\]
The column-reading word of the transpose of $P$ is $3123$,
indeed a reduced word for $s_1s_2s_3s_2$, demonstrating the bijective correspondence 
of Corollary~\ref{corollary.a bij}.
\end{example}

Another immediate outcome of our crystal $B(w)$ is Stanley's famous result~\cite{Stanley:1984} that the
number of reduced expressions for the longest element $w_0 \in S_n$ is equal to the number of standard 
tableaux of staircase shape $\rho=(n-1,n-2,\ldots,1)$. Namely, in $B(w_0)$ there is only one highest weight element
given by the factorization $(s_1)(s_2 s_1) (s_3 s_2 s_1) \cdots (s_{n-1} s_{n-2} \cdots s_1)$. Hence
$B(w_0)$ is isomorphic to the highest weight crystal $B(\rho)$. The reduced words of $w_0$ are precisely
given by the factorizations of weight $(1,1,\ldots,1)$. In $B(\rho)$ they are the standard tableaux of shape $\rho$.
The bijection between the reduced words of $w_0$ and standard tableaux of shape $\rho$ induced by the
crystal isomorphism is precisely $\varphi_{\operatorname{EG}}^Q$ (which due to our conventions of reading
the factorization from right to left gives the transpose of the standard tableau from the straight EG-insertion).

\section{Highest weights and geometric invariants}
\label{section.gromov witten}

Here we study families of constants including Gromov--Witten invariants 
for flag varieties (and in particular, Schubert polynomial 
structure constants~\eqref{schubcon}), the structure constants 
for the Verlinde (fusion) algebra of the Wess--Zumino--Witten model,
and the decomposition of positroid classes into Schubert classes.
Our approach is to apply the $B(\tilde w)$-crystal introduced 
in Section~\ref{section.crystal operators} to affine Schubert calculus.

To be precise, as discussed in the introduction, $H_*(\Gr)$ is isomorphic to the subring
$$
\Lambda_{(n)}=\mathbb Z[h_1,\ldots,h_{n-1}]\,
$$
of the ring of symmetric functions $\Lambda$, where $h_r=\sum_{i_1\leq\cdots \leq i_r} 
x_{i_1}\cdots x_{i_r}$.  
Representatives for the Schubert homology classes are given by 
a basis for $\Lambda_{(n)}$ made up of symmetric functions
called {\it $k$-Schur functions} (hereafter, $k=n-1$).  
Denoted by $s_{\tilde u}^{(k)}$, these are indexed by 
$\tilde u\in \tilde S_n^0$.  The importance of this basis to our study
is that the Schubert structure constants for
$H_*(\Gr)$ match the coefficients in
\begin{equation}
\label{klrbody}
s_{\tilde u}^{(k)}\,s_{\tilde w}^{(k)} = \sum_{\tilde v\in \tS} c_{\tilde u,\tilde w}^{\tilde v,k} \, s_{\tilde v}^{(k)}\,,
\end{equation}
and the families of constants under study here arise as
subsets of these {\it affine Littlewood--Richardson coefficients}.

After recalling the definition of $k$-Schur functions, we begin
by proving that the affine Stanley symmetric functions are none 
other than functions that arose by studying duals of the
$k$-Schur functions.
In doing so, we can apply the crystal $B(\tilde w)$ to the study of affine 
LR numbers and subsequently, to the study of the aforementioned constants.
In this section, to avoid confusion, we use the convention that affine permutations 
are denoted by $\tilde w$ and usual permutations 
of $S_n$ appear as $w$ -- without the tilde.

\subsection{The affine Stanley/dual $k$-Schur correspondence}
\label{subsection.kschur}

There are many equivalent formulations for the $k$-Schur basis.
In the spirit of our presentation, we use its characterization
in terms of the homogeneous basis $\{h_\lambda\}_{\lambda\in\mathcal P^n}$,
where $h_\lambda=h_{\lambda_1}\cdots h_{\lambda_\ell}$
and $\mathcal P^n=\{\lambda \mid \lambda_1<n\}$ is the set of
partitions with all parts shorter than $n$.
This expansion relies on the matrix $\mathcal K$ whose entries,
$$
\mathcal K_{\tilde w,\mu}=|\mathcal W_{\tilde w,\mu}|\,,
$$ 
enumerate affine factorizations of $\tilde w\in\tS$ with fixed 
weight $\mu\in\mathcal P^n$.  

The matrix $\mathcal K$ is square since 
the set of affine Grassmannian elements $\tilde S_n^0$ is in bijection with $\mathcal P^n$.  
Namely, since the window of an affine Grassmannian
$\tilde w\in \tS$ is increasing, $\lambda=\linv(\tilde w)$ is weakly decreasing and its last
entry is zero.  Thus, taking the transpose partition $\lambda'$, the map
\begin{equation}
\label{equation.LC}
	\LC\colon \tilde w\mapsto \linv(\tilde w)'
\end{equation}
sends $\tS\to \mathcal P^n$ and in fact, it is bijective~\cite{BB:1996}.
We use $\tilde w_\lambda$ to denote the inverse image of $\lambda \in \mathcal P^n$.

\begin{remark}
\label{remark:lcr}
The map $\LC$ is well-defined on the set of all
extended affine Grassmannian permutations and,
for fixed $r$, gives a 
bijection between the affine Grassmannian elements 
of $\tilde S_{n,r}$ and $\mathcal P^n$.  In particular,
$\LC(\tilde v) =\lambda$ for $\tilde v=\tau^r \tilde w_\lambda$.
\end{remark}

\begin{example}
For $\tilde{w}=[-2,0,1,4,12]\in\tilde S_5^0$,
the left inversion vector is $\linv(\tilde w)=(3,2,2,1,0)$
and its conjugate is $\LC(\tilde w)=(4,3,1)\in\mathcal P^5$.
Thus, in our notation, $\tilde w=\tilde w_{(4,3,1)}$.
\end{example}

\noindent
The matrix $\mathcal K$ is also unitriangular and thus
characterizes functions, for each $w\in\tS$, by
\begin{equation}
\label{equation.h kschur}
	s_{\tilde w}^{(k)} = \sum_{\mu\in\mathcal P^n}
	\overline{\mathcal K}_{\mu, \tilde w}
	\,
	h_\mu
	\,,
\end{equation}
where $\overline{\mathcal K}=\mathcal K^{-1}$. 
The set of these functions defines the {\it $k$-Schur basis} for $\Lambda_{(n)}$.

The ring $\Lambda_{(n)}$ is naturally Hopf dual to the quotient 
$\Lambda^{(n)}=\Lambda/\langle m_\lambda \mid \lambda_1\geq n\rangle$,
where $m_\lambda = \sum x_{\alpha_1}^{\lambda_1} 
x_{\alpha_2}^{\lambda_2}\cdots x_{\alpha_\ell}^{\lambda_\ell}$
over tuples $(\alpha_1,\ldots,\alpha_\ell) \in \mathbb N^\ell$ with distinct entries.
The elements $\{m_\lambda\}_{\lambda\in\mathcal P^n}$ may be chosen as 
representatives of the dual algebra.  Duality can be used to produce a second basis, 
now for the algebra $\Lambda^{(n)}$.  
Namely, a basis $\{\mathcal F_\lambda\}_{\la\in\mathcal P^n}$
can be characterized as the unique set of elements
in the subspace $\Lambda^{(n)}$ that are dual to $\{s_{\tilde u}^{(k)}\}_{\tilde u\in\tS}$
by appealing to the pairing 
\begin{equation} 
\label{hmdual}
	\langle ~\cdot~, ~\cdot~ \rangle\; : \; \Lambda_{(n)} \times \Lambda^{(n)} \rightarrow \mathbb Q\;,
\end{equation}
where $h_\mu \in \Lambda_{(n)}$ and $m_\la \in \Lambda^{(n)}$ are dual elements.
That is, $\left< h_\mu, m_\la \right> = \delta_{\la\mu}$.

These elements were first studied as a special case of
the {\it dual $k$-Schur functions}, a family defined 
for any skew diagram of 
$\mathcal D = \{\nu/\lambda \mid \ell(\tilde w_\nu \tilde w_\lambda^{-1})=\ell(\tilde w_\nu)-\ell(\tilde w_\lambda)\}$
by
\begin{equation}
\label{dualkschur}
\mathcal F_{\nu/\lambda} = \sum_{\mu \in\mathcal P^n}
\mathcal K_{\tilde w_\nu \tilde w_\lambda^{-1},\mu} \,m_\mu\,.
\end{equation}
The original motivation for their study was to 
produce affine LR (and WZW-fusion) coefficients.  
\begin{theorem}\cite{LM:2008}[Theorem 28]
\label{skewdualindual}
For any $\lambda\subset\nu\in\mathcal P^n$,
\begin{equation}
\mathcal F_{\nu/\lambda} = \sum_{\mu \in\mathcal P^n}
 c_{\tilde w_\lambda,\tilde w_\mu}^{\tilde w_\nu,k}\,
\mathcal F_\mu\,.
\end{equation}
\end{theorem}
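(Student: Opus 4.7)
The plan is to test the claimed equality against the $\{h_\alpha\}$ basis (equivalently, to read off coefficients in the $\{m_\alpha\}$ basis) and thereby reduce the theorem to a scalar identity among $\mathcal{K}$-numbers and affine LR-coefficients. Using the definition \eqref{dualkschur}, the coefficient of $m_\alpha$ in $\mathcal F_{\nu/\lambda}$ is $\mathcal K_{\tilde w_\nu \tilde w_\lambda^{-1},\alpha}$, while the coefficient of $m_\alpha$ in the right-hand side is $\sum_\mu c^{\tilde w_\nu,k}_{\tilde w_\lambda,\tilde w_\mu}\,\mathcal K_{\tilde w_\mu,\alpha}$. The theorem therefore reduces to proving
$$\mathcal K_{\tilde w_\nu \tilde w_\lambda^{-1},\,\alpha} \;=\; \sum_{\mu\in\mathcal P^n} c^{\tilde w_\nu,k}_{\tilde w_\lambda,\tilde w_\mu}\,\mathcal K_{\tilde w_\mu,\,\alpha} \qquad \text{for every } \alpha\in\mathcal P^n,$$
which I will establish by computing the coefficient of $s^{(k)}_{\tilde w_\nu}$ in the product $h_\alpha\,s^{(k)}_{\tilde w_\lambda} \in \Lambda_{(n)}$ in two different ways.

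For the first computation, I exploit Lam's isomorphism $\Lambda_{(n)} \cong H_*(\Gr)$, under which $s^{(k)}_{\tilde w}$ corresponds to the Schubert class $\xi_{\tilde w}$ and $h_r$ to the Pieri class $\xi_{s_{r-1}\cdots s_0}$. The iterated Pieri formula \eqref{iteratekpieri} with $u=\tilde w_\lambda$ then translates directly into
$$h_\alpha\,s^{(k)}_{\tilde w_\lambda} \;=\; \sum_{\tilde v\in\tS} \mathcal K_{\tilde v\,\tilde w_\lambda^{-1},\,\alpha}\,s^{(k)}_{\tilde v},$$
with commutativity of the $h_{\alpha_i}$ making the internal order of $h_\alpha$ irrelevant. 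For the second, I first invert the unitriangular system \eqref{equation.h kschur}, which yields $h_\alpha = \sum_\mu \mathcal K_{\tilde w_\mu,\,\alpha}\,s^{(k)}_{\tilde w_\mu}$. Multiplying by $s^{(k)}_{\tilde w_\lambda}$, expanding each product via \eqref{klrbody}, and using the commutativity $c^{\tilde w_\nu,k}_{\tilde w_\mu,\tilde w_\lambda} = c^{\tilde w_\nu,k}_{\tilde w_\lambda,\tilde w_\mu}$ of the LR-coefficients (inherited from commutativity of multiplication in $\Lambda_{(n)}$), I obtain
$$h_\alpha\,s^{(k)}_{\tilde w_\lambda} \;=\; \sum_{\tilde v\in\tS}\Bigl(\sum_\mu c^{\tilde v,k}_{\tilde w_\lambda,\tilde w_\mu}\,\mathcal K_{\tilde w_\mu,\,\alpha}\Bigr)s^{(k)}_{\tilde v}.$$

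Matching the coefficients of $s^{(k)}_{\tilde w_\nu}$ in these two expansions gives the scalar identity displayed above, and multiplying by $m_\alpha$ and summing over $\alpha \in \mathcal P^n$ recovers the theorem. The substantive ingredient, and in that sense the main step to be justified carefully, is the translation of \eqref{iteratekpieri} into the first of the two expansions: it rests on Lam's theorem that the $k$-Schur functions represent the Schubert basis of $H_*(\Gr)$, together with the identification of $h_r$ with the cyclically decreasing $k$-Schur $s^{(k)}_{s_{r-1}\cdots s_0}$ attached to the Pieri class. Once these identifications are in place, both computations are routine and the final coefficient comparison is immediate.
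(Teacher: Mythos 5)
The paper does not actually prove this statement: it is imported verbatim from \cite{LM:2008} (Theorem 28), so there is no internal proof to measure your argument against. On its own terms your argument is correct and uses only facts the paper quotes. Reading off the coefficient of $m_\alpha$ via \eqref{dualkschur} correctly reduces the theorem to the scalar identity $\mathcal K_{\tilde w_\nu\tilde w_\lambda^{-1},\alpha}=\sum_\mu c^{\tilde w_\nu,k}_{\tilde w_\lambda,\tilde w_\mu}\mathcal K_{\tilde w_\mu,\alpha}$ for partitions $\alpha\in\mathcal P^n$ (which suffices, since the $m_\alpha$ with $\alpha\in\mathcal P^n$ represent a basis of $\Lambda^{(n)}$), and your two expansions of $h_\alpha\,s^{(k)}_{\tilde w_\lambda}$ do establish it: the first is exactly \eqref{iteratekpieri} transported through Lam's identification $H_*(\Gr)\cong\Lambda_{(n)}$, $\xi_{\tilde v}\leftrightarrow s^{(k)}_{\tilde v}$, together with the observation (immediate from unitriangularity of $\mathcal K$) that $h_r=s^{(k)}_{s_{r-1}\cdots s_0}$; the second is the inversion of \eqref{equation.h kschur} combined with \eqref{klrbody} and commutativity of $\Lambda_{(n)}$. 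Two small points worth making explicit if you write this up: (i) you implicitly use that $\mathcal K_{w,\alpha}$ depends only on the multiset of parts of $\alpha$ (equivalently, that $F_w$ is symmetric) when you reorder the $h_{\alpha_i}$ and when you match the iterated Pieri product, indexed by the sequence $(\alpha_\ell,\ldots,\alpha_1)$, against the partition $\alpha$; (ii) the heavy lifting is indeed concentrated in \eqref{iteratekpieri}, which the paper asserts without proof, so your argument is a reduction to that Pieri-type statement rather than a proof from first principles. This duality-plus-Pieri computation is essentially the standard route to the result in the cited literature, so there is nothing to object to.
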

Lam then introduced affine Stanley functions $F_{\tilde{w}}$ 
for $\tilde{w} \in \tilde S_n$ in \cite{Lam:2006} and proved that
\begin{equation}
\label{equation.skew dual k Schur}
	\mathcal F_{\nu/\lambda} = F_{\tilde w_\nu \tilde w_\lambda^{-1}}\,.
\end{equation}
The converse was not readily apparent at the time; 
it was believed that affine Stanley symmetric functions were more
general than dual $k$-Schur functions.  As it turns out, we discovered 
that every affine element of $\tilde S_n$ can be associated
to a skew diagram in such a way that the corresponding
affine Stanley symmetric function is a skew dual $k$-Schur 
function.\footnote{Thomas Lam mentioned 
that he knows a different (unpublished) proof of this fact.}

For this, it is convenient to work not only with $\tilde S_{n}=\tilde S_{n,0}$, 
but with the extended affine symmetric group
as defined in Section~\ref{subsection.affine permutations}.
Our correspondence hinges on an injection from permutations of $S_n$ into 
$\tilde S_{n,\binom{n}{2}}$ defined by
$$
\mathfrak {af}:
w\mapsto 
[w(1),w(2)+n,w(3)+2n,\ldots,w(n)+(n-1)n]\,.
$$
This map was introduced in \cite{LM:2013} as the crux of an association 
between Gromov--Witten invariants for flag manifolds and affine LR-coefficients
(further discussed in Section~\ref{gw}).  We shall need two properties 
of the interplay between $\mathfrak {af}$ and 
length that come out of a close examination of $\linv$ 
(e.g.~\cite{LM:2013}):
For $\tilde v\in\tS$ and $u\in S_n$,
\begin{equation}
\label{equation:lengthaf}
\ell(\tilde v\mathfrak{af}(\id))=\ell(\tilde v)+\ell(\mathfrak{af}(\id))\qquad
\text{and}\qquad
\ell(\mathfrak{af}(u))=\ell(\mathfrak{af}(\id))-\ell(u)\,.
\end{equation}

In fact, the left inversion vector of $\tilde v\mathfrak{af}(\id)$ and
of $\mathfrak{af}(u)$ is a partition.  Precisely, affine permutations in 
the image of $\mathfrak {af}$ always have increasing windows.
Moreover, when $\tilde v =[\tilde v(1)<\cdots <\tilde v(n)]$ is affine 
Grassmannian, 
$\tilde v \mathfrak {af}(\id)=[\tilde v(1),\tilde v(2)+n,\ldots,\tilde v(n)+(n-1)n]$
has an increasing window as well.  This allows for 
a well-defined action of $\LC$ on the range of $\mathfrak{af}$ as
well as on the product of elements of the form $\tilde v \mathfrak {af}(\id)$
for $\tilde v\in\tS$.

\begin{lemma}\cite{LM:2013} 
\label{lemma:permpartbij}
The composition $\LC\circ \mathfrak{af}$ is a bijection from $S_n$ onto
the set of partitions
$\{\lambda\mid \square_{k-1}\subseteq\lambda\subseteq\square_k\}$,
where $\square_k=(k,k-1,k-1,\ldots,1^k)$ is the concatenation 
of all $k$-rectangles.  Further, 
for any $\nu\in\mathcal P^n$ containing $\square_k$,
there is a unique $\tilde v\in \tS$ such that
$\LC(\tilde v\mathfrak{af}(\id))=\nu$.
\end{lemma}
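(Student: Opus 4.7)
The plan is to prove both assertions through explicit left-inversion computations in the window realization of the extended affine symmetric group.

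\emph{The bijection $\LC\circ\mathfrak{af}$.} First, I verify that $\mathfrak{af}(w)$ is always an extended affine Grassmannian element of $\tilde S_{n,\binom{n}{2}}$: its window entries $w(i)+(i-1)n$ are strictly increasing in $i$, since each consecutive difference $n+w(i+1)-w(i)$ is at least $1$, and the coset is identified by $\sum_{i=1}^n(\mathfrak{af}(w)(i)-i)=n\binom{n}{2}$. By Remark~\ref{remark:lcr}, $\LC\circ\mathfrak{af}$ is a well-defined injection $S_n\to\mathcal P^n$. A direct case split on an inversion pair $j'<i$ of $\mathfrak{af}(w)$---writing $j'=j-mn$ with $1\le j\le n$, $m\ge 0$, and using $|w(a)-w(b)|<n$---yields the formula
\[
\linv_i(\mathfrak{af}(w)) = \binom{n-i}{2}+\#\{j>i:w(j)>w(i)\}.
\]
This is always a partition (consecutive differences are nonnegative), and since the co-Lehmer tuple $(\#\{j>i:w(j)>w(i)\})_i\in\prod_i\{0,\ldots,n-i\}$ determines $w$ uniquely, the $i$-th part ranges independently over $[\binom{n-i}{2},\binom{n-i+1}{2}]$. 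Conjugating, the image of $\LC\circ\mathfrak{af}$ is precisely the interval $[\square_{k-1},\square_k]$, with the extremes $\square_k$ and $\square_{k-1}$ attained at $w=\id$ and $w=w_0$ respectively, establishing the bijection.

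\emph{The ``furthermore'' claim.} For $\nu\in\mathcal P^n$ with $\square_k\subseteq\nu$, let $\tilde u\in\tilde S_{n,\binom{n}{2}}$ be the unique extended affine Grassmannian with $\LC(\tilde u)=\nu$ supplied by Remark~\ref{remark:lcr}, and define $\tilde v:=\tilde u\,\mathfrak{af}(\id)^{-1}$. Then $\tilde v\in\tilde S_n$ since $\tilde u$ and $\mathfrak{af}(\id)$ lie in the same coset $\tilde S_{n,\binom{n}{2}}$. The window identity $\tilde u(i)=\tilde v(i)+(i-1)n$ shows that $\tilde v\in\tS$ (equivalently, that $\tilde v$ has an increasing window) if and only if the gap condition $\tilde u(i+1)-\tilde u(i)>n$ holds for all $1\le i<n$. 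Applying the general extended-Grassmannian formula
\[
\linv_i(\tilde u)=\sum_{j=i+1}^n\left\lfloor\frac{\tilde u(j)-\tilde u(i)-1}{n}\right\rfloor,
\]
the gap condition telescopes to $\linv_i(\tilde u)\ge\binom{n-i+1}{2}=\linv_i(\mathfrak{af}(\id))$ for every $i$, i.e.\ to $\LC(\tilde u)\supseteq\square_k$; conversely, any gap deficit produces a local shortfall in $\linv_i(\tilde u)$ that cannot be compensated, by monotonicity of $\linv(\tilde u)$. Uniqueness of $\tilde v$ is immediate from that of $\tilde u$ and invertibility of $\mathfrak{af}(\id)$.

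The main technical step, and the main obstacle, is the equivalence in the second part between the window gap condition on $\tilde u$ and the partition containment $\nu\supseteq\square_k$: the forward direction is a clean telescoping inequality, while the converse demands a careful pair-by-pair comparison of the floor contributions in the formula above against the target staircase values $\binom{n-i+1}{2}$.
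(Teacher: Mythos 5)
There is nothing in the paper to compare against here: Lemma~\ref{lemma:permpartbij} is quoted from \cite{LM:2013} (listed as in preparation) with no proof given. Judged on its own terms, the first half of your argument is correct and complete: the formula $\linv_i(\mathfrak{af}(w))=\binom{n-i}{2}+\#\{j>i: w(j)>w(i)\}$ checks out, the resulting box $\prod_i\bigl[\binom{n-i}{2},\binom{n-i+1}{2}\bigr]$ is exactly the componentwise interval $[(\square_{k-1})',(\square_k)']$, and the co-Lehmer code gives the bijection. Your reduction of the second half to the gap condition $\tilde u(i+1)-\tilde u(i)>n$, and your uniqueness argument, are also sound.

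The gap is exactly where you flagged it, and it cannot be closed, because the implication ``$\LC(\tilde u)\supseteq\square_k$ implies the gap condition'' is false; indeed the ``Further'' assertion fails as literally stated. Take $n=3$ and $\nu=(2,2,2)\in\mathcal P^3$, which contains $\square_2=(2,1,1)$. The unique extended affine Grassmannian element of $\tilde S_{3,3}$ with $\linv(\tilde u)=\nu'=(3,3,0)$ is $\tilde u=[1,2,12]$; here $\tilde u(2)-\tilde u(1)=1\le n$ and $\tilde v=\tilde u\,\mathfrak{af}(\id)^{-1}=[1,-1,6]\notin\tS$. The deficit in $\lfloor(\tilde u(2)-\tilde u(1)-1)/n\rfloor$ is compensated \emph{within} $\linv_1$ by the large term $\lfloor(\tilde u(3)-\tilde u(1)-1)/n\rfloor$ --- precisely the compensation your sketch asserts is impossible. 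The clean statement comes from pushing your own floor formula one step further: for $\tilde v\in\tS$ one gets $\linv_i(\tilde v\,\mathfrak{af}(\id))=\linv_i(\tilde v)+\binom{n-i+1}{2}$, i.e.\ $\linv(\tilde v\,\mathfrak{af}(\id))=\linv(\tilde v)+\linv(\mathfrak{af}(\id))$ componentwise (you can confirm this on the paper's example $[-1,6,12,17]$, where $(8,3,1,0)=(2,0,0,0)+(6,3,1,0)$). Hence the image of $\tilde v\mapsto\LC(\tilde v\,\mathfrak{af}(\id))$ is $\{\mu\cup\square_k:\mu\in\mathcal P^n\}$, where $\cup$ is the multiset union of parts (equivalently, $\nu'-(\square_k)'$ must again be a partition), a strictly stronger condition than $\nu\supseteq\square_k$ once $n\ge 3$. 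So the existence claim requires the hypothesis on $\nu$ to be strengthened accordingly --- a correction that also affects the surjectivity step of Proposition~\ref{skewbijection} --- and under that corrected hypothesis the componentwise identity above yields an immediate proof.
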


\begin{definition}
\label{skewdualgrass}
For $\tilde w \in \tilde S_{n}$, consider the decomposition 
$\tilde w = \tilde v u$ where $u\in S_n$ is the permutation 
that rearranges the window of $\tilde w$ 
into increasing order and $\tilde v \in \tS$ is the resulting 
affine Grassmannian element.
Define
\begin{equation}
\kappa \colon \tilde w \mapsto \nu/\lambda\,,
\end{equation}
for $\nu=\LC(\tilde v\mathfrak {af}(\id))$ and $\lambda = 
\LC(\mathfrak {af}(u^{-1}))$.
\end{definition}

\begin{prop}
\label{skewbijection}
The map $\kappa$ is a bijection
\[
	\kappa \colon \tilde S_{n} \to \{\nu/\lambda \mid 
	\nu, \lambda \in \mathcal P^n \text{ and } \square_{k-1} \subseteq \lambda \subseteq \square_k \subseteq \nu \}\;.
\]
Moreover, if $\kappa(\tilde w) = \nu/\lambda$,
then $\ell(\tilde w)=|\nu|-|\lambda|$ and
$\tilde{w}=\tau^{\binom{n}{2}}\tilde w_{\nu} \tilde w_{\lambda}^{-1}
\tau^{-\binom{n}{2}}$.
\end{prop}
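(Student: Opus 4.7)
The plan is to derive bijectivity, the length identity, and the algebraic identity by combining Lemma~\ref{lemma:permpartbij}, the length relations~\eqref{equation:lengthaf}, and the elementary fact $\mathfrak{af}(u) = u\cdot\mathfrak{af}(\id)$ for $u\in S_n$.

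For bijectivity, I would first observe that the decomposition $\tilde w = \tilde v u$ with $\tilde v\in \tS$ and $u\in S_n$ is the standard minimum-length coset-representative decomposition of $\tilde S_n/S_n$, so it is unique and satisfies $\ell(\tilde w)=\ell(\tilde v)+\ell(u)$. A direct check with $\tilde v(j+n)=\tilde v(j)+n$ shows that the window of $\tilde v\,\mathfrak{af}(\id)$ is strictly increasing; the same holds for $\mathfrak{af}(u^{-1})$ since $|u^{-1}(j)-u^{-1}(i)|<n$. Thus $\LC$ applies to each via Remark~\ref{remark:lcr}. Lemma~\ref{lemma:permpartbij} then says that $\tilde v\mapsto \LC(\tilde v\,\mathfrak{af}(\id))$ is a bijection from $\tS$ onto $\{\nu\in\mathcal P^n : \square_k\subseteq\nu\}$, while $u^{-1}\mapsto \LC(\mathfrak{af}(u^{-1}))$ is a bijection from $S_n$ onto $\{\lambda : \square_{k-1}\subseteq\lambda\subseteq\square_k\}$. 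Composing the inverses of these two independent bijections recovers $(\tilde v,u)$ from $(\nu,\lambda)$ and produces the inverse of $\kappa$.

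For the length identity, I would use that $\ell(\tilde u)=|\LC(\tilde u)|$ whenever $\tilde u$ is an (extended) affine Grassmannian element; this follows from~\eqref{equation.length} together with the fact that the $\tau$-shift of~\eqref{remark.shift} preserves $\linv$. Combining this with~\eqref{equation:lengthaf} yields $\ell(\tilde v)=|\nu|-|\square_k|$ and $\ell(u)=\ell(u^{-1})=|\square_k|-|\lambda|$, and summation gives $\ell(\tilde w)=|\nu|-|\lambda|$.

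The algebraic identity reduces to a single cancellation. The relation $\mathfrak{af}(u)=u\cdot\mathfrak{af}(\id)$ follows from a one-line computation using $u(i+n)=u(i)+n$. Writing each extended affine Grassmannian as $\tau^{\binom{n}{2}}$ times its underlying element of $\tS$, the identifications $\LC(\tilde v\,\mathfrak{af}(\id))=\nu$ and $\LC(\mathfrak{af}(u^{-1}))=\lambda$ become
\[
\tilde v\,\mathfrak{af}(\id) = \tau^{\binom{n}{2}}\,\tilde w_\nu,
\qquad
u^{-1}\,\mathfrak{af}(\id) = \tau^{\binom{n}{2}}\,\tilde w_\lambda.
\]
Solving for $\tilde v$ and $u$ and substituting into $\tilde w=\tilde v u$, the central factors $\mathfrak{af}(\id)^{\pm 1}$ cancel, yielding $\tilde w=\tau^{\binom{n}{2}}\tilde w_\nu \tilde w_\lambda^{-1}\tau^{-\binom{n}{2}}$. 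The main obstacle in executing this plan is purely bookkeeping: keeping careful track of the direction of multiplication ($u$ versus $u^{-1}$), ensuring that $\LC$ extends consistently to extended affine Grassmannians, and composing the various length relations correctly; no deeper ingredient beyond Lemma~\ref{lemma:permpartbij} is needed.
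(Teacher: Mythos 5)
Your proposal is correct and follows essentially the same route as the paper's proof: both rest on the uniqueness of the decomposition $\tilde w=\tilde v u$, the length relations~\eqref{equation:lengthaf}, and Lemma~\ref{lemma:permpartbij}, with the algebraic identity obtained by the same cancellation $\tilde v u=(\tilde v\,\mathfrak{af}(\id))(\mathfrak{af}(u^{-1}))^{-1}$. Your packaging of bijectivity as the composition of two independent bijections, and your explicit verification of $\mathfrak{af}(u)=u\cdot\mathfrak{af}(\id)$, are only presentational refinements of what the paper does.
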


\begin{proof}
Given $\tilde \tau,\tilde w\in \tilde S_n$, consider
their decompositions $\tilde \tau=\tilde b \sigma$
and $\tilde w=\tilde v u$ where $\sigma,u\in S_n$ and
$\tilde b,\tilde v\in \tS$ according to Definition~\ref{skewdualgrass}.
If $\kappa(\tilde w)=\kappa(\tilde \tau)$, then
$\nu=\LC(\tilde b\mathfrak{af}(\id))
=\LC(\tilde v\mathfrak{af}(\id))$ and
$\lambda=\LC(\mathfrak{af}(\sigma^{-1})) = \LC(\mathfrak{af}(u^{-1}))$.
Since $\LC$ is a bijection between $\tilde S_{n,\binom{n}{2}}$ and
$\mathcal P^n$ , we have that
$\tilde b\mathfrak{af}(\id) =\tilde v\mathfrak{af}(\id)$ and
$\mathfrak{af}(\sigma^{-1}) = \mathfrak{af}(u^{-1})$ implying that
$\kappa$ is injective.
Moreover, for a generic $\tilde w=\tilde v u$, 
Lemma~\ref{lemma:permpartbij} indicates that
$\square_{k-1}\subseteq \LC(\mathfrak {af}(u^{-1}))
\subseteq 
\square_k\subseteq\LC(\tilde v\mathfrak {af}(\id))$
and thus the correct codomain has been specified.

To complete the proof that $\kappa$ is bijective, we construct the
preimage of $\nu/\lambda$ assuming that 
$\square_{k-1} \subseteq \lambda \subseteq \square_k \subseteq \nu$
and $\nu,\lambda \in \mathcal P^n$.
In particular, we take $\tilde w=\tilde v u$ for the unique 
$u\in S_n$ and $\tilde v\in \tS$ such that
$\nu=\LC(\tilde v\mathfrak{af}(\id))$ and
$\lambda=\LC(\mathfrak{af}(u^{-1}))$ that is guaranteed to 
exist by Lemma~\ref{lemma:permpartbij}.

Now consider $\kappa(\tilde w)=\nu/\lambda$ and the
usual decomposition $\tilde w=\tilde v u$.
Note that $\ell(\tilde w)=\ell(\tilde v)+\ell(u)$.
On the other hand, $\nu=\LC(\tilde v\mathfrak{af}(\id))$ and
$\lambda=\LC(\mathfrak{af}(u^{-1}))$ imply that
$\ell(\tilde v\mathfrak{af}(\id))=|\nu|$ and
$\ell(\mathfrak{af}(u^{-1}))=|\lambda|$.
From \eqref{equation:lengthaf}, we have that
$\ell(\tilde v\mathfrak{af}(\id))=\ell(\tilde v)+\ell(\mathfrak {af}(\id))$
and $\ell(\mathfrak {af}(u^{-1}))= \ell(\mathfrak {af}(\id))- \ell(u)$. 
Therefore, $|\nu|-|\lambda|=\ell(\tilde v)+\ell(u)=\ell(\tilde w)$.
Lastly, 
$$
\tilde{w}= \tilde{v} u = 
 \left(\tilde v\mathfrak {af}(\id)\right)\left(\mathfrak {af}(u^{-1})\right)^{-1}
=
\left(\tau^{\binom{n}{2}}\tilde w_\nu\right)
\left(\tau^{\binom{n}{2}}\tilde w_\lambda\right)^{-1}
$$
since $\tilde v\mathfrak{af}(\id),\mathfrak{af}(u^{-1})\in
\tilde S_{n,\binom{n}{2}}$ and $\nu=\LC(\tilde v\mathfrak{af}(\id))$ and
$\lambda=\LC(\mathfrak{af}(u^{-1}))$.
\end{proof}

\begin{corollary}
\label{afs2dks}
For every $\tilde w\in\tilde S_n$, the affine Stanley symmetric function
$F_{\tilde w}$ is the dual $k$-Schur function $\mathcal F_{\kappa(\tilde w)}$.
\end{corollary}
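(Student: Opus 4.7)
The plan is to combine Proposition~\ref{skewbijection} with identity~\eqref{equation.skew dual k Schur}, reducing the corollary to a simple conjugation-invariance of affine Stanley symmetric functions under powers of $\tau$. Writing $\kappa(\tilde w) = \nu/\lambda$, Proposition~\ref{skewbijection} provides the explicit factorization
\[
\tilde w \;=\; \tau^{\binom{n}{2}}\, \tilde w_\nu \tilde w_\lambda^{-1}\, \tau^{-\binom{n}{2}},
\]
while~\eqref{equation.skew dual k Schur} identifies $\mathcal F_{\kappa(\tilde w)} = \mathcal F_{\nu/\lambda}$ with $F_{\tilde w_\nu \tilde w_\lambda^{-1}}$. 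Once these two inputs are in place, the corollary is equivalent to the assertion that $F_{\tau^r \tilde u \tau^{-r}} = F_{\tilde u}$ for every $\tilde u \in \tilde S_n$ and every $r \in \mathbb Z$.

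To establish this invariance I would use the defining relation $\tau s_i = s_{i+1}\tau$ recorded in Section~\ref{subsection.affine permutations}, which gives $\tau^r s_i \tau^{-r} = s_{i+r}$ with indices taken mod $n$. Hence any reduced word $s_{i_1}\cdots s_{i_\ell}$ for $\tilde u$ becomes the reduced word $s_{i_1+r}\cdots s_{i_\ell+r}$ for $\tau^r \tilde u \tau^{-r}$. Because the cyclically decreasing condition is itself cyclic in the indices, this cyclic shift restricts to a bijection on cyclically decreasing elements that preserves length. Consequently, the map sending an affine factorization $w^\ell\cdots w^1 \in \mathcal W_{\tilde u}$ to the factorization of $\tau^r\tilde u\tau^{-r}$ whose $j$-th factor is the cyclically decreasing element with content $\{\,c+r \bmod n : c \in \content(w^j)\,\}$ is a weight-preserving bijection on $\mathcal W_{\tau^r\tilde u\tau^{-r}}$. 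Since $F$ depends only on the multiset of factor lengths by~\eqref{affstan}, the equality $F_{\tau^r \tilde u \tau^{-r}} = F_{\tilde u}$ follows, and taking $r = \binom{n}{2}$ completes the proof.

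The main obstacle to this corollary has in fact already been overcome in proving Proposition~\ref{skewbijection}, which packages the nontrivial interaction between the $\mathfrak{af}$ map, the statistic $\LC$, and lengths via~\eqref{equation:lengthaf}. What remains is the conjugation-invariance step, which is essentially a formal observation: the pairing between $\tau$ and the simple reflections is defined precisely so that cyclic decreasingness—and therefore every combinatorial quantity that feeds into $F$—is preserved.
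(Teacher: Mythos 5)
Your proposal is correct and follows essentially the same route as the paper: both reduce the statement via Proposition~\ref{skewbijection} to the identity $F_{\tau^r \tilde u \tau^{-r}} = F_{\tilde u}$, proved by observing that conjugation by $\tau$ cyclically shifts the generators and hence induces a weight-preserving bijection on affine factorizations. The only cosmetic difference is that you pass through~\eqref{equation.skew dual k Schur} while the paper appeals directly to the defining expansion~\eqref{dualkschur}; these amount to the same thing.
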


\begin{proof}
For any $\tilde w\in \tilde S_n$, if
$\nu/\lambda=\kappa(\tilde{w})$ then
$\tilde w = \tau^{\binom{n}{2}}\tilde{w}_\nu \tilde{w}_\lambda^{-1}
\tau^{-\binom{n}{2}}$ by Proposition~\ref{skewbijection}.
Since $\tau^r$ acts on affine elements by
a cyclic shift of the generators, the number of affine factorizations 
of $\tilde w$ with weight $\mu$ is the same as the
number of affine factorizations of $\tilde w_\nu\tilde w_\lambda^{-1}$ 
with weight $\mu$.  
By the definition of dual $k$-Schur and affine Stanley functions,
$F_{\tilde w}=\mathcal F_{\nu/\lambda}$.
\end{proof}

\begin{example}
For $\tilde{w}=s_3 s_0 s_2 s_3 = [-1,4,5,2] \in \tilde{S}_4$, we find
$\tilde{v}=[-1,2,4,5]$ and $u=s_2 s_3=[1,3,4,2]$. From this,
$\mathfrak{af}(u^{-1})=[1,8,10,15]$ and since $\mathfrak{af}(\id)=[1,6,11,16]$, 
we have $\tilde{v} \mathfrak{af}(\id)=[-1,6,12,17]$.
Thus, $\nu=\LC([-1,6,12,17])=(3,2,2,1^5)$ and $\lambda=\LC([1,8,10,15])=(3,1^5)$
implying $\tilde{w}_\nu \tilde{w}_\lambda^{-1} = s_1 s_2 s_0 s_1$.  This indeed is
$\tilde{w}=s_3 s_0 s_2 s_3$ up to a cyclic shift by 2 in the generators. 
\end{example}

\subsection{Structure constants for $H_*(\Gr)$}
\label{subsection.affine LR}

We are now in a position to connect the crystal $B(\tilde w)$
with homology of the affine Grassmannian $\Gr$.
We prove that the highest weights of $B(\tilde w)$ 
are affine LR coefficients which in turn are the Schubert 
structure constants of $H_*(\Gr)$.

The crystal applies to a subclass of the $k$-Schur structure constants
that appear in products of a $k$-Schur function with a Schur function 
indexed by $\mu\subset (r^{n-r})$, for any $1\leq r<n$.
These indeed are affine LR-coefficients of~\eqref{klrbody} by 
the $k$-Schur property that, for any $\mu\subset (r^{n-r})$,
\begin{equation}
\label{kschurisschur}
	s_{\tilde w_\mu}^{(k)}=s_\mu\,.
\end{equation}

We first revisit the affine Stanley symmetric functions with this restriction 
in hand.  While Theorem~\ref{skewdualindual} explains that the dual $k$-Schur 
expansion of $F_{\tilde w}$ yields positive coefficients, it does not 
suggest the same about the Schur expansion coefficients.  In fact, many 
of the Schur coefficients are not positive.
However, when $\mu\subset(r^{n-r})$, we shall prove that they are positive 
and in particular include Gromov--Witten invariants for full flags.

\begin{prop}
\label{af2klr}
For any $\tilde w\in \tilde S_n$, consider 
the Schur expansion 
\begin{equation}
\label{Fins}
F_{\tilde w} = \sum_{\mu} a_{\tilde w,\mu} \,s_\mu\,.
\end{equation}
For $\mu\subset (r^{n-r})$ with $1\leq r<n$,
the coefficient $a_{\tilde w,\mu}$ 
is the (non-negative) affine LR-coefficient
$c_{\tilde w_\lambda,\tilde w_\mu}^{\tilde w_\nu,k}$,
where $\kappa(\tilde w)=\nu/\lambda$.
\end{prop}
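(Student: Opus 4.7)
The strategy is to compute the Hopf pairing $\langle s_{\tilde w_\mu}^{(k)}, F_{\tilde w}\rangle$ from \eqref{hmdual} in two different ways and compare.

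First, combine Corollary~\ref{afs2dks} with Theorem~\ref{skewdualindual}: setting $\nu/\lambda=\kappa(\tilde w)$ gives
\[
F_{\tilde w}\;=\;\mathcal F_{\nu/\lambda}\;=\;\sum_{\alpha\in\mathcal P^n} c_{\tilde w_\lambda,\tilde w_\alpha}^{\tilde w_\nu,k}\,\mathcal F_\alpha,
\]
and since $\{s_{\tilde u}^{(k)}\}_{\tilde u\in\tS}$ and $\{\mathcal F_\alpha\}_{\alpha\in\mathcal P^n}$ are dual bases for \eqref{hmdual}, the pairing collapses to $\langle s_{\tilde w_\mu}^{(k)}, F_{\tilde w}\rangle=c_{\tilde w_\lambda,\tilde w_\mu}^{\tilde w_\nu,k}$.

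Second, under the hypothesis $\mu\subset(r^{n-r})$ with $1\le r<n$, property \eqref{kschurisschur} gives $s_{\tilde w_\mu}^{(k)}=s_\mu$. Because \eqref{hmdual} is the restriction of the standard Hall inner product on $\Lambda$ (both being characterized by $\langle h_\alpha,m_\beta\rangle=\delta_{\alpha\beta}$) and $s_\mu\in\Lambda_{(n)}$ for $\mu\subset(r^{n-r})$, self-duality of the Schur basis under the Hall pairing applied to the Schur expansion $F_{\tilde w}=\sum_\alpha a_{\tilde w,\alpha}s_\alpha$ yields $\langle s_\mu, F_{\tilde w}\rangle=a_{\tilde w,\mu}$. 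Equating the two evaluations gives $a_{\tilde w,\mu}=c_{\tilde w_\lambda,\tilde w_\mu}^{\tilde w_\nu,k}$, and non-negativity is automatic from Lam's interpretation of the affine LR-coefficients as Schubert structure constants in $H_*(\Gr)$.

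The principal technical obstacle is reconciling the Hopf pairing on $\Lambda_{(n)}\times\Lambda^{(n)}$ with the Hall inner product on all of $\Lambda$ when evaluated on $s_\mu$ and $F_{\tilde w}$: one must verify both that $s_\mu\in\Lambda_{(n)}=\mathbb Z[h_1,\ldots,h_{n-1}]$ (immediate since $\mu_1\le r<n$) and that the explicit monomial expansion of $F_{\tilde w}$ from \eqref{dualkschur} is supported on $\mathcal P^n$, so that Schur--Schur Hall orthogonality truly extracts $a_{\tilde w,\mu}$ via the $h/m$ duality. Once this compatibility is in hand, the argument is a short calculation.
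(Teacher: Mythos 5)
Your proposal is correct and follows essentially the same route as the paper: the authors likewise write $F_{\tilde w}=\mathcal F_{\kappa(\tilde w)}=\sum_\mu c_{\tilde w_\lambda,\tilde w_\mu}^{\tilde w_\nu,k}\,\mathcal F_\mu$ via Theorem~\ref{skewdualindual} and Corollary~\ref{afs2dks}, then use duality of $\{s^{(k)}\}$ with $\{\mathcal F\}$ together with $s^{(k)}_{\tilde w_\mu}=s_\mu$ from \eqref{kschurisschur} to identify $c_{\tilde w_\lambda,\tilde w_\mu}^{\tilde w_\nu,k}=\langle F_{\tilde w},s_\mu\rangle=a_{\tilde w,\mu}$. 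The compatibility check you flag is a reasonable point of care but does not change the argument.
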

\begin{proof}
Let $\nu/\lambda = \kappa(\tilde w)$. 
By Theorem~\ref{skewdualindual} and Corollary~\ref{afs2dks},
we have
\[
	F_{\tilde w} = \mathcal F_{\nu/\lambda}
	= \sum_{\mu \in\mathcal P^n} c_{\tilde w_\lambda,\tilde w_\mu}^{\tilde w_\nu,k}\,
	\mathcal F_\mu\,.
\]
Since $\mathcal F_\mu$ and 
$s_\mu^{(k)}$ are dual under the Hall inner product, when $\mu\subset (r^{n-r})$
we use \eqref{kschurisschur} to find
\[
	c_{\tilde w_\lambda,\tilde w_\mu}^{\tilde w_\nu,k} = \langle F_{\tilde w}, s_\mu^{(k)} \rangle
	= \langle F_{\tilde w}, s_\mu \rangle = a_{\tilde w,\mu}
\,,
\]
and the claim follows.
\end{proof}

Having identified particular Schur coefficients in the affine Stanley 
(dual $k$-Schur) function as affine LR coefficients, we 
use highest weights in the crystal to describe them.
For the greatest generality of results, we appeal to another
property of $k$-Schur functions.  It relies on a
family of operators $\{R_i\}_{1\leq i<n}$ that act 
by changing the window $[\tilde u_1,\ldots,\tilde u_n]$ of 
an element $\tilde u\in \tilde S_{n,r}$ to
\begin{equation}
\label{Ri}
	R_i\tilde u=[\tilde u_1-(n-i),\tilde u_2-(n-i),\ldots,\tilde u_i-(n-i), \tilde u_{i+1}+i,\ldots,\tilde u_n+i]\,.
\end{equation}
Note that the application of various $R_i$ commutes.
It was shown in \cite{LM:2013} that when $\tilde u$ is affine
Grassmannian, 
\begin{equation}
\label{linvR}
\linv(R_r\tilde u)=\linv(\tilde u)+\sum_{i=1}^r(n-r)e_i\,,
\end{equation}
where the $e_i$ denote standard basis vectors.
Note then that $R_i(\id)=\tilde w_{(i^{n-i})}$
and hence the $R_i:=R_i(\id)$ are called {\it $k$-rectangles}.
In this case, the $k$-Schur function 
$s_{{R_i}}^{(k)}$ is simply $s_{(i^{n-i})}$ by \eqref{kschurisschur}.
We shall extensively use that the multiplication of a $k$-Schur function 
by such a
term is trivial.  By \cite[Theorem 40]{LM:2007},
\begin{equation}
\label{kschurrec}
s_{{R_i}}^{(k)}\,s_{\tilde u}^{(k)} = s_{R_i\tilde u}^{(k)}\,.
\end{equation}

\begin{theorem}
\label{theorem.main}
Consider $\tilde v,\tilde w\in \tS$ and $\mu\subset (r^{n-r})$ where $1\leq r<n$.
Let $R=\prod R_i$ be any product of $k$-rectangles.
If $\ell(\tilde v)-\ell(\tilde w)\neq |\mu|$,  
then $c_{R\tilde w_\mu,\tilde w}^{R\tilde v,k}=0\,.$
Otherwise, if either
(i) $\tilde v\tilde w^{-1}\in S_{\hat x}$ for some $x\in[n]$  or
(ii) $\ell(\mu)=2$, then
$$
c_{R\tilde w_\mu,\tilde w}^{R\tilde v,k}=
\# \text{ of highest weight factorizations of $\tilde v\tilde w^{-1}$ of weight $\mu$} \,.
$$
\end{theorem}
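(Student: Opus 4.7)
The first claim is a degree check: since $s_{\tilde u}^{(k)}$ is homogeneous of degree $\ell(\tilde u)$, the coefficient $c^{R\tilde v,k}_{R\tilde w_\mu,\tilde w}$ must vanish unless $|R|+|\mu|+\ell(\tilde w)=|R|+\ell(\tilde v)$, giving the asserted length condition.

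For the positive formula, my plan proceeds in three steps.  First I would absorb $R$ using the $k$-rectangle identity \eqref{kschurrec}:  iterating $s_{R_i}^{(k)}s_{\tilde u}^{(k)}=s_{R_i\tilde u}^{(k)}$ yields $s_R^{(k)}s_{\tilde u}^{(k)}=s_{R\tilde u}^{(k)}$, and multiplying $s_{\tilde w_\mu}^{(k)}s_{\tilde w}^{(k)}=\sum_{\tilde u} c^{\tilde u,k}_{\tilde w_\mu,\tilde w}\,s_{\tilde u}^{(k)}$ by $s_R^{(k)}$ on the left and matching coefficients of $s_{R\tilde v}^{(k)}$ gives the translation identity
\[
c^{R\tilde v,k}_{R\tilde w_\mu,\tilde w}=c^{\tilde v,k}_{\tilde w_\mu,\tilde w}.
\]
So it suffices to prove the $R=\mathrm{id}$ case: that $c^{\tilde v,k}_{\tilde w_\mu,\tilde w}$ counts highest weight factorizations of $\tilde v\tilde w^{-1}$ of weight $\mu$.

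Second, I would interpret this coefficient as a Schur coefficient of an affine Stanley function.  By commutativity of the product and Theorem~\ref{skewdualindual}, the coefficient $c^{\tilde v,k}_{\tilde w_\mu,\tilde w}=c^{\tilde v,k}_{\tilde w,\tilde w_\mu}$ equals the coefficient of $\mathcal F_\mu$ in the skew dual $k$-Schur expansion of $\mathcal F_{\LC(\tilde v)/\LC(\tilde w)}$.  Nonvanishing (strong-order considerations on affine LR-coefficients) forces both $\LC(\tilde w)\subseteq \LC(\tilde v)$ and the length condition $\ell(\tilde v\tilde w^{-1})=\ell(\tilde v)-\ell(\tilde w)$, so \eqref{equation.skew dual k Schur} identifies $\mathcal F_{\LC(\tilde v)/\LC(\tilde w)}=F_{\tilde v\tilde w^{-1}}$.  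Since $\mathcal F_\mu=s_\mu$ for $\mu\subseteq (r^{n-r})$, the coefficient of $\mathcal F_\mu$ reduces to the Schur coefficient $a_{\tilde v\tilde w^{-1},\mu}$ in $F_{\tilde v\tilde w^{-1}}$.

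Third, I would invoke Corollary~\ref{corollary.a} to identify $a_{\tilde v\tilde w^{-1},\mu}$ with the number of highest weight factorizations in $\mathcal W_{\tilde v\tilde w^{-1},\mu}$.  The crystal hypothesis of that corollary is furnished by Theorem~\ref{theorem.crystal} in case (i), where $\tilde v\tilde w^{-1}\in S_{\hat x}$ provides the $U_q(A_{\ell-1})$-crystal on $\ell$-factor factorizations, and by Theorem~\ref{theorem.two factor} in case (ii), where $\ell(\mu)=2$ supplies the two-factor $U_q(\mathfrak{sl}_2)$-crystal.  Chaining the three steps yields the theorem.  The main obstacle is the second step---specifically, checking the length/containment hypotheses that trigger \eqref{equation.skew dual k Schur}---and this is exactly where the flexibility to replace $(\tilde v,\tilde w)$ by $(R\tilde v,R\tilde w)$ (via the step-one translation identity) becomes useful, since one may translate into a regime where those hypotheses are manifest while leaving the coefficient unchanged.
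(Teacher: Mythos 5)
Your proposal is correct and follows essentially the same route as the paper's proof: a degree check, absorption of $R$ via the $k$-rectangle identity \eqref{kschurrec} to reduce to $c^{\tilde v,k}_{\tilde w_\mu,\tilde w}$, identification of that coefficient with the Schur coefficient $a_{\tilde v\tilde w^{-1},\mu}$ of the affine Stanley symmetric function, and an appeal to Corollary~\ref{corollary.a} (hence Theorems~\ref{theorem.crystal} and~\ref{theorem.two factor}) in cases (i) and (ii). The only cosmetic difference is that you unwind Theorem~\ref{skewdualindual} and \eqref{equation.skew dual k Schur} by hand where the paper simply cites Proposition~\ref{af2klr}, which packages exactly that computation (via $\kappa(\tilde v\tilde w^{-1})$, so the containment and length hypotheses you worry about are handled there).
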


\begin{proof}
Given $\mu\subset (r^{n-r})$ for any $1\leq r<n$, first
consider the case when $R$ is the empty product.  For this,
we examine the coefficients in 
\begin{equation}
\label{spro}
s_{\tilde w_\mu}^{(k)} \,s_{\tilde w}^{(k)} = 
\sum_{\tilde v}\,c_{\tilde w_\mu,\tilde w}^{\tilde v,k}\, s_{\tilde v}^{(k)}\,.
\end{equation}
Degree conditions on polynomials imply that
$c_{\tilde w_\mu,\tilde w}^{\tilde v,k}=0$ unless $\ell(\tilde v)=\ell(\tilde w)+|\mu|$.
Otherwise, Proposition~\ref{af2klr} indicates that the coefficients are equivalent to
$a_{\tilde z,\mu}$, where $\tilde z=\tilde v\tilde w^{-1}$.
By Corollary~\ref{corollary.a},
this is the number of highest weight factorizations of $\tilde v\tilde w^{-1}$ 
of weight $\mu$.

Now multiply both sides of \eqref{spro} by $s^{(k)}_R$ for 
any product $R=\prod {R_i}$ and use Equation~\eqref{kschurrec} to find
that 
$s_{R\tilde w_\mu}^{(k)} \,s_{\tilde w}^{(k)} = \sum_{\tilde v}\,
c_{\tilde w_\mu,\tilde w}^{\tilde v,k}\, s_{R\tilde v}^{(k)}\,.$
Hence,
\begin{equation}
\label{redbyR}
c_{R\tilde w_\mu,\tilde w}^{R\tilde v,k}=
c_{\tilde w_\mu,\tilde w}^{\tilde v,k}\,,
\end{equation}
and the result thus holds in the generality stated. 
\end{proof}

\subsection{Flag Gromov--Witten invariants}
\label{gw}

Let $\Fl_n$ be the complete flag manifold (chains of vector spaces)
in $\mathbb C^n$.  $\Fl_n$ admits a cell decomposition into 
{\it Schubert cells}, indexed by permutations of $S_n$.
Details on their construction can be found in~\cite{Manivel:2001}.
The set of Schubert classes $\{\sigma_w\}_{w\in S_n}$ forms a
basis for the cohomology ring and counting points in the intersection 
of Schubert varieties (closures of Schubert cells) amounts 
to taking the cup product~\eqref{schubcon} of 
Schubert classes in $H^*(\Fl_n)$~\cite{BGG:1973,Dem:1974}.
In turn, Lascoux and Sch\"utzenberger introduced an explicit
set of polynomial representatives $\mathfrak S_w$
called Schubert polynomials whose structure coefficients,
\begin{equation}
\label{schubpro}
\mathfrak S_u \mathfrak S_v = \sum_{w} c_{u,v}^w\,\mathfrak
S_w\,,
\end{equation}
give these intersection numbers.

As a linear space, the {\it quantum cohomology} of $\Fl_n$
is $\QH^*(\Fl_n)= H^*(\Fl_n)\otimes\mathbb Z[q_1,\ldots,q_{n-1}]$
for parameters $q_1,\ldots, q_{n-1}$.  Its
intricacy is in the multiplicative structure, defined by
\begin{equation}
\label{qschubin}
\sigma_u\,*\,\sigma_w = \sum_v \sum_{\d} 
q_1^{d_1} \cdots q_{n-1}^{d_{n-1}}
\langle u,w,v\rangle_{\d}
\, \sigma_{w_0 v}\,,
\end{equation}
where the structure constants $\langle u,w,v\rangle_d$
are 3-point Gromov--Witten invariants of genus 0 which count
equivalence classes of certain rational curves in $\Fl_n$.
The combinatorial study of these invariants for flag manifolds 
is wide open.  In fact, a manifestly positive formula in the case 
that $q_1=\cdots=q_{n-1}=0$, when the invariants reduce to
coefficients in \eqref{schubpro}, has been an open problem for 40 years.

Our approach is to use an identification of the Gromov--Witten invariants 
with affine LR--coefficients that was made in \cite{LM:2013}.  It
requires the map $\mathfrak {af}: S_n\to \tilde S_{n,\binom{n}{2}}$, 
and we thus allow for $k$-Schur functions indexed 
by extended affine permutations with increasing window.
That is, using \eqref{equation.LC}, we set
$s_{\tilde w}^{(k)} = s_{\tilde w_{\LC(\tilde w)}}^{(k)}$.
Note that then affine LR coefficients may be
indexed by extended affine Grassmannian elements 
in $\tilde S_{n,r}$ as well.

\begin{theorem}(proven in \cite{LM:2013})
\label{theorem:gwklr}
For any $u,v,w\in S_n$, the degree $d$, 3-point Gromov--Witten 
invariants of genus zero for complete flags are
the affine LR--coefficients given by
\begin{equation}
\label{gwklr}
\langle u,w,w_0v\rangle_{\d} = c_{\mathfrak {af}(u),\mathfrak {af}(w)}^{
\mathfrak{af}_{\d}(v),k}\,,
\end{equation}
where 
\begin{equation}
\label{eq:afd}
\mathfrak{af}_{\d}(v)=\prod_{i=1}^{n-1}R_i^{d_{i-1}+d_{i+1}-2d_i+1}\,
\mathfrak {af}(v)
\,.
\end{equation}
If $\mathfrak{af}_{\d}(v)$ is not affine Grassmannian or 
$\ell(u)+\ell(w)=\ell(v)+2\sum_{i=1}^{n-1}d_i$, then
$\langle u,w,w_0v\rangle_{\d} = 0$.
\end{theorem}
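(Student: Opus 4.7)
The plan is to invoke the Peterson--Lam--Shimozono isomorphism between a localization of $\QH^*(\Fl_n)$ and the corresponding localization of $H_*(\Gr)$, established in \cite{LS:2010}. Under this isomorphism a Schubert class $\sigma_w \in \QH^*(\Fl_n)$ corresponds to the $k$-Schur representative indexed by $\mathfrak{af}(w)$, and each quantum parameter $q_i$ acts by multiplication by the $k$-rectangle class $s_{R_i}^{(k)}$. With this dictionary in hand, the quantum product identity \eqref{qschubin} translates directly into an identity among $k$-Schur functions, and the theorem reduces to bookkeeping which affine Grassmannian element appears on the right-hand side of \eqref{klrbody}.

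Concretely, I would rewrite $\sigma_u *_q \sigma_w$ under the isomorphism as $s_{\mathfrak{af}(u)}^{(k)} s_{\mathfrak{af}(w)}^{(k)}$ and expand via \eqref{klrbody}. Applying the isomorphism to the right-hand side of \eqref{qschubin} produces $\sum_{v,\d} \langle u,w,w_0 v\rangle_\d \,\bigl(\prod_i (s_{R_i}^{(k)})^{e_i(\d)}\bigr)\, s_{\mathfrak{af}(v)}^{(k)}$, where the exponents $e_i(\d)$ record how each $q_i$ translates through the Peterson map. Repeated application of \eqref{kschurrec} absorbs the $k$-rectangle factors into a single $k$-Schur function whose index is a shifted affine element, and the task is to identify this element with $\mathfrak{af}_\d(v)$, which amounts to showing $e_i(\d)=d_{i-1}+d_{i+1}-2d_i+1$.

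The main obstacle is pinning down this discrete Laplacian plus one. An efficient route is to verify it on the quantum Chevalley/Monk generators: since $\QH^*(\Fl_n)$ is generated by divisor classes whose $q$-structure is explicit, it suffices to check the exponent formula in that base case and then extend by associativity of the quantum product. Tracking the window shift via \eqref{Ri} and the $\linv$-increment \eqref{linvR} under $R_i$ shows that neighbouring $R$-operators interact pairwise, which is precisely what produces the second-difference shape of the exponent; the constant $+1$ arises from the basepoint offset in $\mathfrak{af}$.

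The vanishing assertions are then immediate. If $\mathfrak{af}_\d(v)$ fails to have an increasing window then it is not affine Grassmannian, so the corresponding $k$-Schur coefficient is zero by definition. The length constraint $\ell(u)+\ell(w)=\ell(v)+2\sum_{i=1}^{n-1}d_i$ is simply the degree balance of \eqref{gwklr}, obtained by comparing $\ell\bigl(\mathfrak{af}(u)\bigr)+\ell\bigl(\mathfrak{af}(w)\bigr)$ with $\ell\bigl(\mathfrak{af}_\d(v)\bigr)$ using \eqref{equation:lengthaf} together with the length shift $|R_i|=i(n-i)$ recorded in \eqref{linvR}.
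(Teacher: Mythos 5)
The paper does not actually prove Theorem~\ref{theorem:gwklr}: it is imported from \cite{LM:2013} (listed in the bibliography as ``in preparation''), so there is no in-paper argument to compare yours against. Your strategy --- push the quantum product \eqref{qschubin} through the Peterson/Lam--Shimozono isomorphism of \cite{LS:2010}, use \eqref{kschurrec} to absorb the quantum parameters into the index of a single $k$-Schur function, and read off the affine LR coefficient --- is exactly the route the authors gesture at in their introduction, and it is correct in outline.

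There is, however, a genuine inconsistency at the heart of your dictionary. You assert that ``each quantum parameter $q_i$ acts by multiplication by the $k$-rectangle class $s_{R_i}^{(k)}$''; if that were literally true, $q^{\d}$ would contribute $\prod_i \bigl(s_{R_i}^{(k)}\bigr)^{d_i}$ and the exponent in \eqref{eq:afd} would come out as $d_i+1$, not $d_{i-1}+d_{i+1}-2d_i+1$. The correct statement is that under the Peterson map $q_i$ corresponds to translation by the (negative of the) simple coroot $\alpha_i^\vee$, whereas multiplication by $s_{R_i}^{(k)}$ realizes translation by the fundamental coweight $\omega_i^\vee$; the second difference $d_{i-1}+d_{i+1}-2d_i$ is precisely the change of basis $\alpha_i^\vee = 2\omega_i^\vee-\omega_{i-1}^\vee-\omega_{i+1}^\vee$ applied to $\d$, and the extra $+1$ comes from the product of all $k$-rectangles hidden inside $\mathfrak{af}$ (compare the identity $\mathfrak{af}(u)=\prod_{i\neq r}R_i\,\tau^p\tilde w_\mu$ used in the proof of Theorem~\ref{the:GW}). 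Your remark that ``neighbouring $R$-operators interact pairwise'' reaches for this but never states it, and the proposed verification via Chevalley/Monk generators plus associativity is left entirely unexecuted, so the single substantive computation of the theorem is missing. Two smaller points: the displayed length condition must be read with $\neq$ in place of $=$ (as in Theorem~\ref{the:GW}) --- you implicitly make this correction by calling it the degree balance, but you should say so --- and the vanishing when $\mathfrak{af}_{\d}(v)$ is not affine Grassmannian is not ``zero by definition'' of a $k$-Schur coefficient; it requires knowing that the corresponding class vanishes in the localized quotient, which is again part of the Lam--Shimozono machinery rather than a formality.
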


Theorem~\ref{theorem.main} thus applies to the study of Gromov--Witten 
invariants. The imposed conditions translate to the
study of the natural subclass of $\langle u,w,w_0v\rangle_{\d}$ 
where $u$ is a coset representative of $S_n/S_r\times S_{n-r}$ for 
some $r$.  
A set of representatives is given by the {\it Grassmannian} 
permutations of $S_n$, characterized by having exactly one descent in position $r$.  
Each Grassmannian permutation $u$ can be identified with a partition 
$\lambda(u)=(\lambda_1,\ldots,\lambda_r)$ in the $r\times (n-r)$ 
rectangle by setting $\lambda_i=|\{u_j \mid u_{r+1-i}>u_j\text{ and } j>r\}|$.
We also use the complement partition $\lambda^\vee$ to $\lambda$ in this
rectangle; that is,
$\lambda^\vee=(n-r-\lambda_r,\ldots,n-r-\lambda_1)$. 

\begin{theorem}
\label{the:GW}
For any $\d\in\mathbb N^{n-1}$ and $u,w,v\in S_n$ where $u$ is Grassmannian 
with descent at position $r$, let $\mu' =\lambda(u)^\vee$.
If $\ell(v)\neq\ell(u)+\ell(w)-2\sum_i d_i$,
then $\langle u,w,w_0v\rangle_d=0$.
Otherwise, if (i) $(R\,v)w^{-1}\in S_{\hat x}$ for some $x\in [n]$ 
or (ii) $\ell(\mu)=2$, then
$$
\langle u,w,w_0v\rangle_{\d}=\# \;\text{of highest weight factorizations 
of $(R\,v)w^{-1}$
with weight $\mu$}
$$
for $R=R_r \, \prod_{i=1}^{n-1} R_i^{d_{i-1}+d_{i+1}-2d_i}$.
\end{theorem}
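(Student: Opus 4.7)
The plan is to convert the Gromov--Witten invariant into an affine Littlewood--Richardson coefficient using Theorem~\ref{theorem:gwklr}, strip off a common product of $k$-rectangle factors via~\eqref{redbyR}, and then apply Theorem~\ref{theorem.main}. By Theorem~\ref{theorem:gwklr},
\[
\langle u,w,w_0v\rangle_{\d} \;=\; c_{\mathfrak{af}(u),\,\mathfrak{af}(w)}^{\mathfrak{af}_{\d}(v),\,k},
\]
which already accounts for the vanishing clause under the length mismatch $\ell(v)\neq\ell(u)+\ell(w)-2\sum_i d_i$, since $\mathfrak{af}_{\d}(v)$ then fails the length balance required for a nonzero affine LR coefficient.

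The key step is the $k$-Schur factorization of $\mathfrak{af}(u)$. For $u$ Grassmannian with descent at $r$ and $\mu'=\lambda(u)^\vee$ I would prove
\[
\mathfrak{af}(u) \;=\; \Bigl(\prod_{i\neq r} R_i\Bigr)\tilde w_\mu
\]
as $k$-Schur indices (equivalently, $\LC(\mathfrak{af}(u))$ equals the $\LC$-partition obtained by applying each $R_i$, $i\neq r$, to $\tilde w_\mu$ via~\eqref{linvR}), using Lemma~\ref{lemma:permpartbij} and a direct inversion-vector computation. The role of the descent position $r$ is to pinpoint the ``missing'' $k$-rectangle among those that compose the partition arising in Lemma~\ref{lemma:permpartbij}. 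Combined with formula~\eqref{eq:afd} for $\mathfrak{af}_{\d}(v)$ and the commutativity of the $R_i$, this factors a common prefix out of the upper and lower indices:
\[
\mathfrak{af}_{\d}(v) \;=\; \Bigl(\prod_{i\neq r}R_i\Bigr)\,R\,\mathfrak{af}(v),\qquad R=R_r\prod_{i=1}^{n-1}R_i^{d_{i-1}+d_{i+1}-2d_i}.
\]
Applying~\eqref{redbyR} with prefactor $\prod_{i\neq r}R_i$ then collapses the coefficient to
\[
c_{\mathfrak{af}(u),\mathfrak{af}(w)}^{\mathfrak{af}_{\d}(v),k}
\;=\;
c_{\tilde w_\mu,\,\mathfrak{af}(w)}^{R\,\mathfrak{af}(v),\,k}.
\]

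Under either hypothesis~(i) $(Rv)w^{-1}\in S_{\hat x}$ or~(ii) $\ell(\mu)=2$, Theorem~\ref{theorem.main} identifies this quantity with the number of highest weight factorizations of $R\,\mathfrak{af}(v)\cdot\mathfrak{af}(w)^{-1}$ of weight $\mu$. The identity $\mathfrak{af}(v)=v\cdot\mathfrak{af}(\id)$ (immediate from $v(i+(i-1)n)=v(i)+(i-1)n$ by periodicity) yields $\mathfrak{af}(v)\mathfrak{af}(w)^{-1}=vw^{-1}$ in $\tilde S_n$, and a direct check shows that the window operator $R$ from~\eqref{Ri} coincides with left multiplication by the corresponding affine permutation. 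Hence $R\,\mathfrak{af}(v)\cdot\mathfrak{af}(w)^{-1}=(Rv)w^{-1}$, closing the argument.

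The principal obstacle is the $k$-Schur identification of $\mathfrak{af}(u)$ in the second paragraph. The technical bookkeeping is to track how the $\LC$-partitions interact with the translation operators $R_i$ via~\eqref{linvR} in the extended affine setting (where $\mathfrak{af}(u)$ lives in $\tilde S_{n,\binom{n}{2}}$ rather than $\tilde S_n^0$); once the identification is secured, the remainder of the proof is a mechanical combination of~\eqref{redbyR}, \eqref{eq:afd}, and Theorem~\ref{theorem.main}.
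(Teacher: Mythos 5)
Your proposal follows the paper's proof essentially step for step: convert the invariant via Theorem~\ref{theorem:gwklr}, use the identification $\mathfrak{af}(u)=\prod_{i\neq r}R_i\,\tau^{\binom{n}{2}}\tilde w_\mu$ for $\mu'=\lambda(u)^\vee$ (which the paper imports from \cite{LM:2013} as \eqref{Rafu} rather than reproving), strip the common prefix $\prod_{i\neq r}R_i$ with \eqref{kschurrec}, and invoke Theorem~\ref{theorem.main}. One justification is off: $R_i$ is \emph{not} left multiplication by an affine permutation --- its action on a window entry depends on the entry's position, not its value --- so the identity $R\,\mathfrak{af}(v)\,\mathfrak{af}(w)^{-1}=(Rv)w^{-1}$ does not follow by associativity as you assert. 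The identity is nevertheless true, because $R_i$ and right multiplication by $\mathfrak{af}(\id)$ both shift window entries by constants depending only on position and hence commute, giving $R\,\mathfrak{af}(v)=R(v\,\mathfrak{af}(\id))=(Rv)\,\mathfrak{af}(\id)$ and then $R\,\mathfrak{af}(v)\,\mathfrak{af}(w)^{-1}=(Rv)w^{-1}$; the paper instead records the relevant element as $\tau^{-p}(Rv)w^{-1}\tau^{p}$ and observes that conjugation by $\tau^{p}$ preserves factorization counts and carries $S_{\hat x}$ to $S_{\widehat{\tau^{p}x}}$. With that one repair, your argument is the paper's argument.
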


\begin{proof}
We shall use the correspondence $\langle u,w,w_0v\rangle_{\d}= 
c_{\mathfrak{af}(u),\mathfrak{af}(w)}^{\mathfrak{af}_{\d}(v),k}$
of \eqref{gwklr} as our guide.  We first examine $\mathfrak{af}(u)$ 
in the case that $u\in S_n$ has exactly one descent at position $r$.
It was shown in~\cite{LM:2013} that
\begin{equation}
\label{Rafu}
\linv(\tilde u)=\lambda(u)^\vee\,,
\end{equation}
for
$\tilde u = \prod_{i\neq r} R^{-1}_i \mathfrak{af}(u) \in\tilde S_{n,p}$,
where $p=\binom{n}{2}$.  
In particular, $\tilde u$ is affine Grassmannian and therefore
has a unique decomposition $\tilde u=\tau^p\tilde w_\mu$ for 
some $\mu\in \mathcal P^n$.
In fact, $\mu'=\lambda(u)^\vee$ since $\linv(\tilde u)=\linv(\tilde w_\mu)=\mu'$.
Therefore, $\mathfrak{af}(u)=\prod_{i\neq r} R_i\tau^p\tilde w_{\mu}$
for $\mu'=\lambda(u)^\vee$. 

Next consider 
$\mathfrak{af}_{\d}(v)= \left(\prod_{i\neq r}R_i \right)\, R\, \mathfrak {af}(v)$
where $R= R_r \prod_{i=1}^{n-1} R_i^{d_{i-1}+d_{i+1}-2d_i}$.
From \eqref{kschurrec}, it can be deduced that
\begin{equation}
\label{gwonedescent}
\langle u,w,w_0v\rangle_{\d}=c_{\prod_{i\neq r} R_i\tau^p\tilde w_{\mu}, 
\mathfrak{af}(w)}^{\prod_{i\neq r}R_i\, R\, \mathfrak {af}(v),k}
=
\begin{cases} 
c_{\tilde w_\mu,\tau^{-p}\mathfrak{af}(w)}^{\tilde w_\nu,k}
&\text{if } R\, \mathfrak {af}(v)= \tau^p\tilde w_\nu\;,\\
\\
0   &\text{otherwise.}
\end{cases}
\end{equation}
We thus assume that $R\, \mathfrak {af}(v)= \tau^p\tilde w_\nu$
and consider the quantity $q(u,w,v)=\ell(u)+\ell(w)-2\sum_id_i-\ell(v)
=\ell(\mathfrak{af}(\id))-\ell(v)-2\sum_i  d_i 
-\ell(\mathfrak{af}(\id))+\ell(w)+\ell(u)$.
Using that $\mu'=\lambda(u)^\vee$ and
the length conditions of~\eqref{equation:lengthaf}, we 
have that $q(u,w,v)=\ell(\mathfrak{af}(v))-2\sum_i  d_i +(n-r)r
-|\mu| -\ell(\mathfrak{af}(w))$.
By~\eqref{equation.length} and Property~\eqref{linvR}, we then have
$$
\ell(u)+\ell(w)-\ell(v)-2\sum_id_i
=\ell(R\mathfrak{af}(v)) -\ell(\mathfrak{af}(w))-\ell(\tilde w_\mu)\,.
$$
If $q(u,w,v)\neq 0$, then degree conditions on polynomial multiplication imply that
$c_{\tilde w_\mu,\tau^{-p}\mathfrak{af}(w)}^{\tilde w_\nu,k}=0$.
Otherwise, note that
$\tilde w_\nu(\tau^{-p}\mathfrak{af}(w))^{-1}=
\tau^{-p}R\mathfrak{af}(v)\mathfrak{af}(\id)^{-1}w^{-1}\tau^p=
\tau^{-p}(Rv)w^{-1}\tau^p$.  Since $\tau$ acts on reduced
expressions by adding a constant to each letter,
the number of affine factorizations of $\tau^{-p}(Rv)w^{-1}\tau^p$ 
and of $(Rv)w^{-1}$ are the same.
Moreover, if $(Rv)w^{-1}\in S_{\hat x}$ for some $x\in [n]$, 
then $\tilde w_\nu(\tau^{-p}\mathfrak{af}(w))^{-1}\in S_{\widehat{\tau^p x}}$.
The result thus follows from Theorem~\ref{theorem.main}.
\end{proof}

\begin{corollary}
\label{cor:GWall}
For $\d\in\mathbb N^{n-1}$ and $u,w,v\in S_n$ where $u$ has exactly one descent 
at position $r$, let $\mu' =\lambda(u)^\vee$.
If $\ell(v)\neq \ell(u)+\ell(w)-2\sum_id_i$
then $\langle u,w,w_0v\rangle_{\d}=0$.
Otherwise, if $\ell(u)\geq n(n-r-1)$, then
$$
\langle u,w,w_0v\rangle_{\d}=\# \;\text{of
highest weight factorizations of $(Rv)w^{-1}$ of weight 
$\mu$,}
$$
where $R= R_r \prod_{i=1}^{n-1} R_i^{d_{i-1}+d_{i+1}-2d_i}$.
\end{corollary}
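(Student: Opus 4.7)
The plan is to derive Corollary~\ref{cor:GWall} directly from Theorem~\ref{the:GW} by showing that the hypothesis $\ell(u)\ge n(n-r-1)$ forces condition~(i) of that theorem, namely $(Rv)w^{-1}\in S_{\hat x}$ for some $x\in[n]$.

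First I would compute $|\mu|$. Because $u$ is Grassmannian with unique descent at position $r$, $\ell(u)=|\lambda(u)|$, and $\mu'=\lambda(u)^\vee$ is the complement of $\lambda(u)$ in the $r\times(n-r)$ rectangle. Hence
\[
|\mu|=|\mu'|=r(n-r)-\ell(u)\;\le\;r(n-r)-n(n-r-1)\;=\;n-(n-r)^2.
\]
Since a Grassmannian permutation has its descent in $[n-1]$, $n-r\ge 1$, so $|\mu|\le n-1<n$.

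Next I would transfer this bound to $(Rv)w^{-1}$. From the proof of Theorem~\ref{the:GW}, $\tau^{-p}(Rv)w^{-1}\tau^{p}=\tilde w_\nu(\tau^{-p}\mathfrak{af}(w))^{-1}$, and the degree identity $\ell(\tilde w_\nu)=\ell(\tau^{-p}\mathfrak{af}(w))+|\mu|$ established there gives $\ell((Rv)w^{-1})\ge|\mu|$. If the inequality is strict, then no affine factorization of $(Rv)w^{-1}$ of weight $\mu$ exists, so the right-hand side of the corollary is zero; simultaneously, by homogeneity of the affine Stanley function $F_{(Rv)w^{-1}}$ together with Proposition~\ref{af2klr}, the coefficient $c^{\tilde w_\nu,k}_{\tilde w_\mu,\tau^{-p}\mathfrak{af}(w)}$---and hence $\langle u,w,w_0v\rangle_\d$---also vanishes, so the identity holds vacuously. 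Otherwise $\ell((Rv)w^{-1})=|\mu|\le n-1<n$, so any reduced expression for $(Rv)w^{-1}$ uses at most $n-1$ distinct simple reflections, forcing $(Rv)w^{-1}\in S_{\hat x}$ for some $x\in[n]$.

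Having verified condition~(i), Theorem~\ref{the:GW} supplies the asserted formula, and its vanishing clause passes through unchanged. The only nontrivial ingredient is the inequality $|\mu|\le n-1$ extracted above, which is exactly what the hypothesis $\ell(u)\ge n(n-r-1)$ encodes: $\lambda(u)$ nearly fills the $r\times(n-r)$ rectangle. The main (and essentially only) technical subtlety is the case analysis on whether $\ell((Rv)w^{-1})$ equals $|\mu|$, since the length of $(Rv)w^{-1}$ need not a priori match the weight $|\mu|$; this is resolved by observing that both sides of the claimed identity vanish whenever the lengths disagree.
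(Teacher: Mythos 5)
Your proof is correct and takes essentially the same route as the paper's: both deduce $|\mu|\le n-(n-r)^2<n$ from the hypothesis $\ell(u)\ge n(n-r-1)$, conclude that $(Rv)w^{-1}$ has length less than $n$ and therefore omits some generator, i.e.\ lies in $S_{\hat x}$ for some $x\in[n]$, and then invoke Theorem~\ref{the:GW}. The only minor difference is in how the bound on $|\mu|$ is transferred to $\ell((Rv)w^{-1})$: the paper argues via the alternating expression \eqref{klralt}, in which every contributing factorization has weight of size $|\mu|$, whereas you use the length identity from the proof of Theorem~\ref{the:GW} together with an explicit (and harmless) case split on whether $\ell((Rv)w^{-1})$ equals $|\mu|$.
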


\begin{proof}
We again use identification \eqref{gwonedescent}
of the invariants $\langle u,w,w_0v\rangle_{\d}$ with
$c_{\tilde w_{\mu},\tau^{-p}\mathfrak{af}(w)}^{\tilde w_\nu,k}$,
where $\tau^p\tilde w_\nu = R\mathfrak {af}(v)$.
The substitution of monomials in terms of Schur functions
(via the inverse Kostka matrix $\bar K$) into the monomial expansion
\eqref{dualkschur} of skew dual $k$-Schur functions
gives an alternating expression,
\begin{equation}
\label{klralt}
c_{\tilde w_\mu,\tilde w_\lambda}^{\tilde w_\nu,k} = 
\sum_{\alpha: |\alpha|=\ell(\tilde w_\nu \tilde w_\lambda^{-1})} 
\mathcal K_{\tilde w_\nu \tilde w_\lambda^{-1},\alpha}\, \overline K_{\alpha,\mu}
\,,
\end{equation}
for the affine LR coefficients.  Thus, they count a subset of 
affine factorizations of 
$\tilde v=\tilde w_\nu\mathfrak {af}(w)^{-1}\tau^p$ of weight $\alpha$, 
where $|\alpha|=|\mu|$.  Recall that an affine factorization of
$\tilde v$ has weight $\alpha$ only if
$\ell(\tilde v)=|\alpha|$.
Since $\ell(u)=|\lambda(u)|\geq n(n-r-1)$, 
we have that $|\mu|<n$ and hence $\ell(\tilde v)<n$.
But $\tilde v\in\tilde S_n$ and
therefore $\tilde v\in S_{\hat x}$
for some $x\in[n]$.
The result then follows by Theorem~\ref{the:GW}.
\end{proof}

Theorem~\ref{the:GW} and its corollary apply
to the problem of describing structure 
constants in the product of a Schur polynomial by a (quantum) Schubert polynomial
\cite{LS:1982,FGP:1997}.
Recall (e.g. \cite[Section 10.6, Proposition 8]{Fulton:1997})
that when $u\in S_n$ is Grassmannian with a descent at position $r$,
the Schubert polynomial $\mathfrak S_u$ is simply 
the Schur polynomial $s_{\lambda(u)}(x_1,\ldots,x_r)$.
Thus, we can address the coefficients in
$$
s_\lambda(x_1,\ldots,x_r)\,\mathfrak S_w = 
\sum_{v\in S_n} \langle u,w,w_0v\rangle_0\, \mathfrak S_{v}
\,,
$$
and in the quantum ($\d\neq 0$) analog.

In~\cite{MPP:2012}, the Fomin-Kirillov algebra is used to study 
the Gromov--Witten invariants $\langle u,w,w_0v\rangle_{\d}$ for
$u,v,w\in S_n$ where $u$ is Grassmannian and $\lambda(u)$ is a hook shape.
Their conditions on $u$ imply $|\lambda(u)|<n$ and thus suggest a
relation to a subset of the cases treated in Corollary~\ref{cor:GWall}.
However, despite satisfying a number of symmetry properties (e.g.~\cite{Post:2001}), 
the Gromov--Witten invariants are not symmetric under
the complementing of $\lambda(u)$ and there is no apparent relation.

Corollary~\ref{cor:GWall} can be used to give new results for the
classical case by setting $\d=0$.  For these, there is an even simpler 
highest weight formulation.

\begin{corollary}
\label{d=0}
Let  $u,v,w\in S_n$ where $u$ has exactly one descent at position $r$.
If $\ell(v)\neq \ell(u)+\ell(w)$, then $\langle u,w,w_0v\rangle_0=0$.
Otherwise, if either (i) $(R_r v)w^{-1}\in S_{\hat x}$ for some $x\in [n]$ or 
(ii) $\ell(\mu)=2$ for $\mu'=\lambda(u)^\vee$, then
$$
\langle u,w,w_0v\rangle_0=\# \;\text{of
highest weight factorizations 
of $(R_r v) w^{-1}$ of weight $\mu$.}
$$
\end{corollary}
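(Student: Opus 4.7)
The plan is to obtain Corollary~\ref{d=0} as an immediate specialization of Theorem~\ref{the:GW} to the case $\d = 0 \in \mathbb N^{n-1}$. All that is required is to simplify the translation operator and the length condition in this case, and to verify that the two hypotheses (i) and (ii) carry over verbatim.

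First I would examine the translation operator appearing in Theorem~\ref{the:GW},
$$
R = R_r \prod_{i=1}^{n-1} R_i^{d_{i-1}+d_{i+1}-2d_i},
$$
where by convention $d_0 = d_n = 0$. Setting $d_i = 0$ for all $i$ makes every exponent $d_{i-1}+d_{i+1}-2d_i$ equal to zero, so $\prod_i R_i^{0}$ is the identity operator on $\tilde S_{n,r}$ (as each $R_i$ acts invertibly on windows by \eqref{Ri}). Hence $R = R_r$, matching the operator appearing in the statement of Corollary~\ref{d=0}.

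Next, the length condition $\ell(v) = \ell(u)+\ell(w) - 2\sum_i d_i$ in Theorem~\ref{the:GW} collapses, when $\d=0$, to $\ell(v) = \ell(u)+\ell(w)$. Theorem~\ref{the:GW} asserts that $\langle u, w, w_0 v\rangle_\d = 0$ whenever the corresponding length equality fails, which in our case yields $\langle u, w, w_0 v\rangle_0 = 0$ whenever $\ell(v) \neq \ell(u) + \ell(w)$, exactly as claimed.

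Finally, once $R=R_r$, hypothesis (i) of Corollary~\ref{d=0} that $(R_r v)w^{-1} \in S_{\hat x}$ is literally hypothesis (i) of Theorem~\ref{the:GW}, and hypothesis (ii) that $\ell(\mu) = 2$ for $\mu' = \lambda(u)^\vee$ is also unchanged. Therefore, under either hypothesis, Theorem~\ref{the:GW} gives
$$
\langle u, w, w_0 v\rangle_0 = \#\{\text{highest weight factorizations of }(R_r v)w^{-1}\text{ of weight }\mu\},
$$
which is the desired formula. The main step is essentially bookkeeping — there is no real obstacle beyond confirming that the hypotheses of the parent theorem are preserved under the specialization, since the nontrivial crystal-theoretic content has already been carried out in the proofs of Theorem~\ref{theorem.main} and Theorem~\ref{the:GW}.
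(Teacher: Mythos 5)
Your proposal is correct and matches the paper's (implicit) argument: Corollary~\ref{d=0} is exactly Theorem~\ref{the:GW} specialized to $\d=0$, where all exponents $d_{i-1}+d_{i+1}-2d_i$ vanish so that $R=R_r$, the length condition becomes $\ell(v)=\ell(u)+\ell(w)$, and hypotheses (i) and (ii) carry over unchanged. The paper gives no separate proof for this corollary, treating it as immediate bookkeeping in just the way you describe.
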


\begin{example} 
Let $u=(1,2,4,7,3,5,6)$, $w=(3,1,5,4,2,6,7)$, and $v=(4,2,5,7,1,3,6)$ be 
permutations of $S_7$ in one-line notation.  Since $u$ is Grassmannian 
with its descent at position $r=4$, we find $\mu'=(3,3,2)$ by taking
the complement of $\lambda(u)=(3,1)$.  Note that
$\ell(w)=5$ and $\ell(v)=9$ and indeed $\ell(u)=\ell(v)-\ell(w)$.
By Corollary~\ref{d=0}, we compute the coefficient of $\mathfrak S_{v}$ 
in $s_{\lambda(u)}(x_1,\ldots,x_4)\mathfrak S_w$ by counting the highest weight 
factorizations of $\sigma=(R_4v)w^{-1}$ with weight $(3,3,2)$.

We note that $w^{-1}=(2,5,1,4,3,6,7)$ and use~\eqref{Ri} to find
$R_4v=[1,-1,2,4,5,7,10]$.  This gives $\sigma=s_6s_2s_3s_4s_3s_1s_2s_0$
whose affine factorizations $\sigma=v^3v^2v^1$ of weight $\mu=(3,3,2)$ 
satisfy $\ell(v^3)=2,\ell(v^2)=3,\ell(v^1)=3$ and each has a
word that is decreasing with respect to $4>3>2>1>0>6$ (since
$\sigma\in S_{\hat 5}$). Possible affine factorizations are:
$$
\content(v^3) \content(v^2) \content(v^1)
=(26)(431)(420)\quad (26)(310)(432) \quad (42)(316)(420)\quad (21)(436)(420)\;,
$$
and valid highest weights satisfy the extra condition that
all elements in factor $\content(v^i)$ are paired with an element of $\content(v^{i-1})$, for each $i>1$.  
Since $(26)(310)(432)$ is the only such factorization,
we find $\langle u,w,w_0v\rangle_0=1$.
\end{example}

\subsection{Quantum cohomology of the Grassmannian and fusion coefficients}

As with the quantum cohomology of full flags, the small quantum cohomology 
ring of the Grassmannian $\QH^*(\Gr(r,n))$ is a deformation of the 
usual cohomology.  As a linear space, this is the tensor product
$H^*(\Gr(r,n))\otimes\mathbb Z[q]$ and $\{\sigma_\lambda\}_{\lambda\subset (r^{n-r})}$
forms a $\mathbb Z[q]$-linear basis
of $\QH^*(\Gr(r,n))$.  Multiplication is a $q$-deformation of the product
in $H^*(\Gr(r,n))$, defined by
$$
\sigma_\lambda * \sigma_\mu = \sum_{\nu\subset (r^{n-r})\atop
|\nu|=|\lambda|+|\mu|-dn}
q^d\; \langle \lambda,\mu, \nu \rangle_d \,\sigma_{\nu^\vee}
\,.
$$
where the $\langle \lambda,\mu,\nu \rangle_d$ are the 3-point Gromov--Witten invariants
of genus 0 for the Grassmannian.  These constants
count the number of maps $f \colon \mathbb P_1 \to {\rm Gr}(r,n)$ 
whose image has degree $d$ and meets generic translates of Schubert varieties associated
to $\lambda,\mu$, and $\nu$.  

\begin{theorem}
\cite[Theorem 5.6]{LM:2008}
\label{gromovwitten}
For $\lambda,\mu,\nu \subset (r^{n-r})$,
$\langle \lambda,\mu,\nu \rangle_d = c_{w_\lambda,w_\mu}^{w_{\hat \nu},n-1}$,
where $\hat \nu$ is constructed from $\nu$ by adding
$d$ $n$-rim hooks, each starting in column $r$
and ending in the first column.  
\end{theorem}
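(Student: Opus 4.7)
The plan is to deduce the identification from the Peterson isomorphism together with an explicit computation of how the quantum parameter $q$ acts on the $k$-Schur basis. Peterson's theorem identifies $\QH^*(\Gr(r,n))[q^{-1}]$ with an appropriate localization of $H_*(\Gr) \cong \Lambda_{(n)}$, sending the Schubert class $\sigma_\lambda$ for $\lambda\subset(r^{n-r})$ to the $k$-Schur representative $s^{(n-1)}_{w_\lambda}$; by Equation~\eqref{kschurisschur} this equals the ordinary Schur function $s_\lambda$ when $\lambda$ fits in the rectangle. This gives the anchor needed to compare the two multiplicative structures on bases.

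First, I would pull the quantum product back to the affine side. The left-hand side of the expansion $\sigma_\lambda * \sigma_\mu = \sum_{\nu,d} q^d \langle \lambda,\mu,\nu\rangle_d\, \sigma_{\nu^\vee}$ lifts to $s^{(n-1)}_{w_\lambda}\,s^{(n-1)}_{w_\mu}$, which by Equation~\eqref{klrbody} expands as $\sum_{\tilde v\in\tilde S_n^0} c_{w_\lambda,w_\mu}^{\tilde v,n-1}\,s^{(n-1)}_{\tilde v}$. The right-hand side contains the parameter $q$, and the heart of the argument is to translate $q$ into an affine operation. Specifically, I would establish that under the Peterson projection, $q$ corresponds to the rule that sends $s^{(n-1)}_{w_\nu}$ to $s^{(n-1)}_{w_{\nu^+}}$, where $\nu^+$ is obtained from $\nu$ by adding a single $n$-rim hook starting in column $r$ and ending in column $1$. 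One way to see this is to realize $q$ as multiplication by the image of $s^{(n-1)}_{R_r}$ (up to a cyclic rotation $\tau$), using the identity of Equation~\eqref{kschurrec} and the fact that adding an $n$-rim hook with the prescribed shape is precisely the combinatorial effect of $R_r$ on the partition indexing an affine Grassmannian element via the bijection $\LC$ of~\eqref{equation.LC}.

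With this dictionary in hand, iterating gives $q^d\,\sigma_{\nu^\vee} \longleftrightarrow s^{(n-1)}_{w_{\hat\nu}}$, so matching coefficients of $s^{(n-1)}_{w_{\hat\nu}}$ on both sides yields $\langle \lambda,\mu,\nu\rangle_d = c_{w_\lambda,w_\mu}^{w_{\hat\nu},n-1}$. The main obstacle will be the combinatorial identification of the rim-hook-adding operation with the action of the quantum parameter: one must verify (a) that successive additions of column-$r$-to-column-$1$ rim hooks are well defined at every step and correspond bijectively to powers of $q$, and (b) that any $\tilde v\in\tilde S_n^0$ whose indexing partition $\LC(\tilde v)$ is not of the form $\hat\nu$ for some admissible $(\nu,d)$ has $s^{(n-1)}_{\tilde v}$ in the kernel of the Peterson projection. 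Both points reduce to a careful analysis of the interaction between the $\LC$ map, the cyclic shift $\tau$, and the $k$-rectangle operators $R_i$ on affine Grassmannian elements, which is the technical core of the argument.
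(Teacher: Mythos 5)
First, a point of comparison: the paper does not prove this statement at all --- it is quoted verbatim from \cite[Theorem 5.6]{LM:2008}, so there is no internal proof to measure your argument against. The proof in \cite{LM:2008} does not go through Peterson's isomorphism; it works with the quotient presentation of $\QH^*(\Gr(r,n))$ and the rim-hook algorithm of \cite{BCF:1999}, combined with the characterization~\eqref{equation.h kschur} of $k$-Schur functions in the homogeneous basis and the $k$-rectangle property. Your proposed route via \cite{LS:2010} is in principle viable (and is the route the present paper takes for the \emph{flag} case in Theorem~\ref{theorem:gwklr}), but as written it contains a genuine error at its central step.

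The gap is your identification of the quantum parameter $q$ with multiplication by $s^{(n-1)}_{R_r}$. By~\eqref{kschurrec} and the $k$-rectangle property, multiplication by $s^{(k)}_{R_r}=s_{(r^{n-r})}$ acts on the indexing partition by $\nu\mapsto \nu\cup(r^{n-r})$, i.e.\ it inserts $n-r$ rows of length $r$, adding $r(n-r)$ cells. The quantum parameter for $\Gr(r,n)$ has degree $n$, and the operation $\nu\mapsto\hat\nu$ in the theorem adds $n$ cells per rim hook. These agree neither in degree (except in the accidental case $r(n-r)=n$, e.g.\ $\Gr(2,4)$) nor in shape: a union with a rectangle is not the addition of a border strip running from column $r$ to column $1$. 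Already for $\Gr(2,5)$ your dictionary sends $q$ to an operator of degree $6$, so the coefficient matching in your last paragraph cannot go through. The $R_i$ operators do realize quantum parameters, but for the \emph{complete flag manifold}, where each $q_i$ has degree $2$ and enters through the second difference $\prod_i R_i^{d_{i-1}+d_{i+1}-2d_i}$ as in~\eqref{eq:afd}; for the Grassmannian the single parameter $q$ corresponds under the Peterson map to a translation whose effect on the $\LC$-label is precisely the $n$-rim-hook addition, and establishing that is the actual content one would have to supply. Your verification points (a) and (b) are the right things to check, but they are aimed at the wrong operator, so the argument as proposed would fail.
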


This theorem combined with Theorem~\ref{theorem.main} shows that the crystal on affine factorizations applies directly 
the quantum cohomology of the Grassmannian. Furthermore, Postnikov~\cite{Posttoric:2005} defined cylindric Schur functions
$s_{\nu/d/\lambda}$ indexed by skew cylindric shapes $\nu/d/\lambda$ and proved that the 3-point genus 0 Gromov--Witten invariants 
for the Grassmannian $\langle \lambda,\mu,\nu \rangle_d$ appear in the Schur expansion 
of the toric Schur functions~\cite[Theorem 5.3]{Posttoric:2005} (which are restrictions to finitely many variables)
\begin{equation}
\label{equation.toric expansion}
       s_{\nu/d/\lambda}(x_1,\ldots,x_r) = \sum_{\mu \subseteq ((n-r)^r)} \langle \lambda, \mu, \nu \rangle_d \; s_\mu(x_1,\ldots,x_r)\;.
\end{equation}
Lam proved in \cite[Theorem 36]{Lam:2006} that the cylindric Schur
functions are precisely the subset of affine Stanley symmetric functions (or skew dual $k$-Schur functions)
whose index set is the affine permutations containing no braid relation $s_i s_{i+1} s_i$, also called
\textit{$321$-avoiding affine permutations}. This suggests that there is a crystal structure 
on 321-avoiding affine factorizations that would describe the 3-point, genus 0, Gromov--Witten invariants
of the Grassmannian in general without the conditions imposed by Theorem~\ref{theorem.main}.
Buch et al.~\cite{BKPT:2014} recently proved Knutson's puzzle rule for the Schubert structure constants on 
two-step flag varieties, which by~\cite[Corollary 1]{BKT:2003} yield Gromov--Witten invariants defining the (small) quantum 
cohomology ring $\QH^*({\rm Gr}(r,n))$ of a Grassmann variety in type $A$. 

It was proven~\cite{A:1995,BCF:1999} that the structure constants of the 
quantum cohomology of the Grassmannian are related to the fusion coefficients. 
We now reformulate our results in the fusion setting.
For $n>\ell\geq 1$, consider the quotient 
$R^{\ell n} = \Lambda_{(\ell)}/I^{\ell n}$,
where $I^{\ell n}$ is the ideal generated by
Schur functions that have exactly $n-\ell+1$
rows of length smaller than $\ell$:
$$
I^{\ell n}= \Bigl \langle s_{\lambda} \, 
\Big| \, 
 \# \{ j \, |  \, \lambda_j < \ell\}=n-\ell+1 \Bigr \rangle \, .
$$
The Verlinde (fusion) algebra of the WZW model associated 
to $\widehat{su}(\ell)$ at level $n-\ell$ is isomorphic to the quotient of
$R^{\ell n}$ modulo the single relation $s_{\ell}\equiv 1$ 
\cite{GW:1990,Kac,Walton:1990}. The fusion coefficient 
$\mathcal N_{\lambda, \mu}^{\nu}$ is defined for
$\lambda,\mu,\nu\subseteq ((n-\ell)^{\ell-1})$ as in~\eqref{equation.fusion}.
It was shown in~\cite{LM:2008} that
$$
\mathcal N_{\lambda, \mu}^{\nu}= c_{\tilde w_{\lambda'} \tilde w_{\mu'}}^{\tilde w_{\hat\nu},k}\,,
$$ 
where $\hat\nu=(\ell^{(|\lambda|+|\mu|-|\nu|)/\ell},\nu')$.

\begin{corollary}\cite{MS:2012}
Let $\lambda,\mu,\nu\subseteq ((n-\ell)^{\ell-1})$ and $\hat\nu=(\ell^{(|\lambda|+|\mu|-|\nu|)/\ell},\nu')$.
Whenever $\tilde w_{\hat \nu} \tilde w_{\lambda'}^{-1} \in S_{\hat x}$ for some $x\in [n]$ or $\ell(\mu')=2$,
the fusion coefficient $\mathcal N_{\lambda, \mu}^\nu$
is the number of highest weight factorizations of $\tilde w_{\hat \nu} \tilde w_{\lambda'}^{-1}$ of weight $\mu'$.
\end{corollary}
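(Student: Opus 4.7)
The plan is to reduce the corollary to a direct application of Theorem~\ref{theorem.main} via the identity
\[
\mathcal N_{\lambda,\mu}^{\nu} = c_{\tilde w_{\lambda'},\tilde w_{\mu'}}^{\tilde w_{\hat\nu},k}
\]
from \cite{LM:2008} recalled just above the corollary statement. In the notation of Theorem~\ref{theorem.main}, I would take $\tilde w := \tilde w_{\lambda'}$, $\tilde v := \tilde w_{\hat\nu}$, and let the partition called $\mu$ in the theorem be $\mu'$ of the corollary, with $R$ the empty product of $k$-rectangle translation operators. The symmetry of affine LR coefficients in their two lower indices lets me rewrite $c_{\tilde w_{\lambda'},\tilde w_{\mu'}}^{\tilde w_{\hat\nu},k} = c_{\tilde w_{\mu'},\tilde w_{\lambda'}}^{\tilde w_{\hat\nu},k}$, matching exactly the pattern on the left side of Theorem~\ref{theorem.main}.

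Next I would verify the hypotheses required. Since $\mu \subseteq ((n-\ell)^{\ell-1})$, the conjugate satisfies $\mu' \subseteq ((\ell-1)^{n-\ell}) \subseteq ((\ell-1)^{n-(\ell-1)})$, so $\mu'$ fits inside an $(r^{n-r})$ rectangle with $r = \ell-1$ and $1 \le r < n$, as required. The two alternative hypotheses of the corollary, namely $\tilde w_{\hat\nu}\tilde w_{\lambda'}^{-1} \in S_{\hat x}$ for some $x\in[n]$, or $\ell(\mu') = 2$, are precisely cases (i) and (ii) of Theorem~\ref{theorem.main}.

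The remaining check is the length constraint $\ell(\tilde w_{\hat\nu}) - \ell(\tilde w_{\lambda'}) = |\mu'|$. Using the bijection $\LC$ of \eqref{equation.LC}, which sends an affine Grassmannian element to the conjugate of its left-inversion composition, I find $\ell(\tilde w_{\hat\nu}) = |\hat\nu|$ and $\ell(\tilde w_{\lambda'}) = |\lambda'| = |\lambda|$. A direct computation from the definition $\hat\nu = (\ell^d,\nu')$ with $d = (|\lambda|+|\mu|-|\nu|)/\ell$ gives
\[
|\hat\nu| \;=\; \ell d + |\nu'| \;=\; |\lambda|+|\mu|-|\nu| + |\nu| \;=\; |\lambda|+|\mu|,
\]
so $\ell(\tilde w_{\hat\nu}) - \ell(\tilde w_{\lambda'}) = |\mu| = |\mu'|$. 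Every hypothesis of Theorem~\ref{theorem.main} is therefore in place, and the corollary follows immediately. No genuine obstacle is expected: the depth of the result is entirely contained in Theorem~\ref{theorem.main}, and the corollary itself reduces to partition bookkeeping, the $k$-Schur/fusion translation of \cite{LM:2008}, and the observation that conjugating $\mu$ produces the correct combinatorial weight of the factorizations counted by the crystal.
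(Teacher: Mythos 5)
Your proposal is correct and is essentially the argument the paper intends: the corollary is stated as an immediate consequence of the identity $\mathcal N_{\lambda,\mu}^{\nu}=c_{\tilde w_{\lambda'},\tilde w_{\mu'}}^{\tilde w_{\hat\nu},k}$ from \cite{LM:2008} together with Theorem~\ref{theorem.main} (with $R$ the empty product), and the paper simply omits the bookkeeping you carry out. Your verifications --- the symmetry of the lower indices, the containment $\mu'\subseteq((\ell-1)^{n-\ell+1})$, and the length computation $|\hat\nu|-|\lambda|=|\mu'|$ via $\ell(\tilde w_\lambda)=|\lambda|$ --- are exactly the omitted routine checks and are all correct.
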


\subsection{Positroid stratification}
\label{positroid}

The real Grassmannian variety $\Gr(r,n)$ can be represented 
by the quotient space of $r\times n$ matrices (with at least one non-zero maximal minor) 
modulo the left action of the group ${\rm GL}_r$ of real $r\times r$ matrices with non-zero 
determinant.
The best understood cellular decomposition of the Grassmannian is given 
by the disjoint union of Schubert cells $\Omega_\lambda$ indexed by partitions 
$\lambda\subset ((n-r)^{r})$.  The totally nonnegative
Grassmannian $\Gr(r,n)_{\geq 0}$ is a subdivision of the Grassmannian $\Gr(r,n)$, 
presented in \cite{Post:2005} by the $r\times n$ matrices $X$ with all nonnegative 
Pl\"ucker coordinates $\Delta_I(X)$ (maximal $r\times r$ minors).  

For each point $X\in \Gr(r,n)_{\geq 0}$, the {\it positroid} of $X$ is defined by
$$
\mathcal M_X = \left\{I\in\binom{[n^+]}{r} \mid
\Delta_I(X)>0\right\}
\,,
$$
where $\binom{[n^+]}{r}$ is the set of $r$-element subsets of $\{1,\ldots,n\}$.
When the collection of points
$$
\mathring{\Pi}_{\mathcal M} = \left\{X\in \Gr(r,n)_{\geq 0} \mid
\mathcal M_X = \mathcal M\right\}
$$
is non-empty, it is called a {\it positroid cell}.  Postnikov established 
that the indexing sets for positroid cells are given by
various combinatorial objects such as Grassmann necklaces, 
Le diagrams, and plabic graphs.

It turns out that the complexification of positroid cells relates to the
projected Richardson stratification for the full flag manifold.  In particular, 
the positroid cells are {\it open positroid varieties}, 
defined in \cite{KLS:2013} to be an intersection of $n$ Schubert cells, 
taken with respect to the $n$ cyclic rotations of the standard flag.
The closure of the open positroid varieties are called the {\it positroid varieties}.
Knutson, Lam, and Speyer proved that the positroid varieties are
Richardson varieties projected to $\Gr(r,n)$ and they introduced yet 
another equinumerous indexing set;
the set of {\it bounded affine permutations},
$$
\text{Bound}(r,n) = \{ \tilde{w}\in \tilde S_{n,r} \mid i\leq \tilde{w}(i)\leq i+n\}\,.
$$
We shall denote positroid varieties by $\Pi_{\tilde{w}}$, for $\tilde{w}\in \text{Bound}(r,n)$,
and refer the reader to \cite{KLS:2013} for details on the map from
$\tilde{w}$ to $\mathcal M$.

Remarkably, the cohomology classes of positroid varieties can be 
represented by a projection of affine Stanley symmetric functions.  
Consider the map $\psi: \Lambda \rightarrow H^*(\Gr(r,n))$ 
where
$$
\psi(s_\lambda)= 
\begin{cases}
[\Omega_{\lambda}] & \;\text{ when } \lambda\subset (r^{n-r})\\
0 & \;\text{ otherwise}.
\end{cases}
$$
It was proven (\cite{KLS:2013}, Theorem 7.1) that
for each $\tilde w\in \operatorname{Bound}(r,n)$, 
$$\psi(F_{\tilde{w}})=[\Pi_{\tilde{w}}]\in H^*(\Gr(r,n))\,.
$$
It was also shown there that as long as  $\tilde w$ is
321-avoiding, the decomposition of positroid classes
into Schubert classes is given by 
Gromov--Witten invariants for the Grassmannian.
More generally, we use Proposition~\ref{af2klr} to show
that for any bounded $\tilde w$, the decomposition 
is given by certain $k$-Schur structure constants 
where now Gromov--Witten invariants for flags appear.

\begin{theorem}
Let $\tilde w\in \operatorname{Bound}(r,n)$.
The cohomology class of a positroid  decomposes over Schubert classes as
$$
[\Pi_{\tilde w}] = \sum_{\lambda\subset ((n-r)^r)}
a_{\tilde w,\lambda} \, [\Omega_\lambda] \,,
$$
where the set of $a_{\tilde w,\lambda}$ are affine LR-coefficients,
which include 3-point Gromov--Witten invariants for the complete flag $\Fl_n$.
\end{theorem}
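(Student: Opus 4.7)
The proposal is to combine the ring-homomorphism property of $\psi$ with the Schur expansion of affine Stanley symmetric functions established in Proposition~\ref{af2klr}, and then identify the relevant subclass of affine LR-coefficients with flag Gromov--Witten invariants via Theorem~\ref{theorem:gwklr}.

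Concretely, I would start from the identification $\psi(F_{\tilde w}) = [\Pi_{\tilde w}]$ from \cite{KLS:2013} recalled just before the theorem. Writing the Schur expansion
\[
F_{\tilde w} \;=\; \sum_{\mu} a_{\tilde w,\mu}\, s_{\mu},
\]
and applying $\psi$, the only surviving terms are those with $\mu$ contained in the rectangle indexing Schubert classes of $\Gr(r,n)$, so
\[
[\Pi_{\tilde w}] \;=\; \sum_{\lambda\subset ((n-r)^{r})} a_{\tilde w,\lambda}\, [\Omega_{\lambda}].
\]
Proposition~\ref{af2klr} then identifies each such coefficient with an affine LR-coefficient: if $\kappa(\tilde w)=\nu/\mathfrak{l}$, then $a_{\tilde w,\lambda}= c_{\tilde w_{\mathfrak{l}},\tilde w_{\lambda}}^{\tilde w_{\nu},k}$. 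This gives the first half of the claim with essentially no work beyond invoking the machinery already assembled.

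The substantive second half is to realize that among these coefficients one finds 3-point, genus-zero Gromov--Witten invariants of $\Fl_n$. The plan is to exhibit, for suitable triples $(u,w,v)\in S_n\times S_n\times S_n$ and $\d\in\mathbb N^{n-1}$, a bounded affine permutation $\tilde w$ together with $\lambda\subset ((n-r)^r)$ such that the coefficient $a_{\tilde w,\lambda}$ coincides with $\langle u,w,w_0 v\rangle_{\d}$. By Theorem~\ref{theorem:gwklr} this invariant equals $c_{\mathfrak{af}(u),\mathfrak{af}(w)}^{\mathfrak{af}_{\d}(v),k}$, so the task reduces to arranging, after a rectangle translation via the operators $R_i$ and a power of $\tau$, that $\tilde w_\nu$ plays the role of $\mathfrak{af}_{\d}(v)$ and $\tilde w_{\mathfrak{l}}$ the role of $\mathfrak{af}(w)$, while $\tilde w_\lambda$ corresponds to $\mathfrak{af}(u)$ under~\eqref{Rafu}. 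The relation~\eqref{kschurrec} shows that multiplying by $k$-rectangles only shifts the three indices by a common product $R$, leaving the structure constant unchanged, and Proposition~\ref{skewbijection} guarantees the requisite $\tilde w\in\tilde S_n$ exists with $\kappa(\tilde w)=\nu/\mathfrak{l}$.

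The main obstacle I anticipate is bookkeeping: one must verify that the resulting $\tilde w$ actually lies in $\operatorname{Bound}(r,n)$ (i.e.\ satisfies $i\le \tilde w(i)\le i+n$), not merely in the extended affine symmetric group. This amounts to checking that, after the shifts implemented by the $R_i$'s and $\tau^{\binom{n}{2}}$, the window of $\tilde w$ has its entries bounded as required. Because $\mathfrak{af}(u)$ is built from $u\in S_n$ via the explicit rule $\mathfrak{af}(u)(i)=u(i)+(i-1)n$, the verification is ultimately combinatorial and should reduce to the length identities~\eqref{equation:lengthaf} together with the explicit form of $\mathfrak{af}_{\d}(v)$ in~\eqref{eq:afd}. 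Once the membership in $\operatorname{Bound}(r,n)$ is established, the theorem follows from stringing together the citations above.
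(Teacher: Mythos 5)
Your first half coincides with the paper's proof: both factor $\psi$ through the Schur expansion of $F_{\tilde w}$ --- the paper phrases this as writing $\psi=\phi\circ\bar\psi$, where $\bar\psi$ is the quotient by the ideal $\mathcal I=\langle e_i,h_j\mid i>r,\ j>n-r\rangle$, which annihilates exactly the $s_\mu$ with $\mu$ outside the rectangle --- and then invoke Proposition~\ref{af2klr} to recognize the surviving coefficients as affine LR-coefficients. The divergence is in how the Gromov--Witten clause is handled. The paper proves nothing further at that point: it reads ``which include 3-point Gromov--Witten invariants'' as a statement about the class of affine LR-coefficients, already supplied by Theorem~\ref{theorem:gwklr}, so its proof ends after citing Proposition~\ref{af2klr}. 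You instead set out to realize a given $\langle u,w,w_0v\rangle_{\d}$ as a coefficient of an actual positroid class $[\Pi_{\tilde w}]$, a strictly stronger assertion whose crux is exactly the step you flag but leave open: verifying that the affine permutation obtained from $(Rv)w^{-1}$ via $\kappa^{-1}$ lies in $\operatorname{Bound}(r,n)$. That verification is genuinely nontrivial --- the condition $i\le\tilde w(i)\le i+n$ together with membership in a fixed coset $\tilde S_{n,r}$ is a real constraint that a general triple $(u,w,v)$ with arbitrary $\d$ will violate --- so your stronger claim would at best hold for a restricted family of triples and is not established as written. For the theorem as literally stated, your argument is already complete once you drop the extra construction and simply cite Theorem~\ref{theorem:gwklr} for the inclusion of flag Gromov--Witten invariants among affine LR-coefficients.
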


\begin{proof}
Recall that the cohomology $H^*(\Gr(r,n))$ can be presented as the quotient
$$
\phi \colon H^*(\Gr(r,n)) \cong \Lambda/\mathcal I\,,
$$
where the ideal $\mathcal I = \langle e_i,h_j\mid i>r,j>n-r \rangle$.
Since $\mathcal I$ is spanned by the Schur functions $s_\lambda$
whose shapes do not fit inside the $r\times (n-r)$-rectangle, isomorphism 
$\phi$ is given by identifying the Schubert class $[\Omega_\lambda]$ 
to (the coset of) the Schur function $s_\lambda$.
The map $\psi$ thus decomposes into the composition of $\phi$ and $\bar\psi$,
where $\bar \psi$ is the quotient map from $\Lambda$ to $\Lambda/\mathcal I$ defined by 
annihilating each $s_\lambda$ for $\lambda\not\subset ((n-r)^r)$.
Thus we have 
$$
\bar\psi(F_{\tilde w}) = \sum_{\mu\subset ((n-r)^r)} a_{\tilde w,\mu} \,s_\mu
$$
and by Proposition~\ref{af2klr} the coefficients $a_{\tilde w,\mu}$ are affine LR-coefficients.
\end{proof}

Applying Theorem~\ref{the:GW} (or Corollary~\ref{corollary.a}) to this result, we can
explicitly describe the decomposition in terms of
the crystal in many cases.

\begin{corollary}
When $\tilde w\in S_{\hat x}\subseteq \operatorname{Bound}(r,n)$ for 
some $x\in [n]$, 
$$
[\Pi_{\tilde w}] = \sum_{\lambda\subset((n-r)^r)} a_{\tilde w,\lambda} \,[\Omega_\lambda]\,,
$$
where $a_{\tilde w,\lambda}=\left|\{v\in \mathcal W_{\tilde w,\lambda} \mid \et_i(v)={\bf 0}\;\forall i\}\right|$
counts highest weight factorizations of the crystal $B(\tilde w)$.
\end{corollary}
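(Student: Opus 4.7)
The plan is to combine the preceding theorem with Corollary~\ref{corollary.a} via the hypothesis $\tilde w\in S_{\hat x}$. The preceding theorem has already identified $a_{\tilde w,\lambda}$ as the coefficient of $s_\lambda$ in the expansion $\bar\psi(F_{\tilde w}) = \sum_{\mu\subset((n-r)^r)} a_{\tilde w,\mu}\,s_\mu$, which by the definition of the quotient map $\bar\psi$ is exactly the restriction of the Schur expansion $F_{\tilde w} = \sum_\mu a_{\tilde w,\mu}\,s_\mu$ to partitions fitting inside the $r\times(n-r)$ rectangle. So I would first recall this identification: the coefficients appearing in the positroid decomposition are literally the Schur coefficients of the affine Stanley symmetric function $F_{\tilde w}$ indexed by $\lambda\subset((n-r)^r)$.

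Next, I would invoke the hypothesis $\tilde w \in S_{\hat x}$ in order to apply Corollary~\ref{corollary.a}, which asserts that under exactly this assumption, the Schur expansion coefficient $a_{\tilde w,\lambda}$ enumerates the highest weight factorizations in $\mathcal W_{\tilde w,\lambda}$. This corollary rests in turn on Theorem~\ref{theorem.crystal} (so that $B(\tilde w)$ is a genuine $U_q(A_{\ell-1})$-crystal, making ``highest weight" meaningful through $\et_i(v)=\mathbf{0}$ for all $i$) and on Theorem~\ref{theorem.highestwts} (so that the multiplicity of each irreducible in the decomposition is counted by these highest weights). Both of these are available unconditionally given $\tilde w \in S_{\hat x}$.

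Substituting the crystal-theoretic expression for $a_{\tilde w,\lambda}$ from Corollary~\ref{corollary.a} into the positroid decomposition from the previous theorem then yields the stated formula
\[
    [\Pi_{\tilde w}] = \sum_{\lambda\subset((n-r)^r)} \bigl|\{v\in \mathcal W_{\tilde w,\lambda} \mid \et_i(v)=\mathbf{0}\ \forall i\}\bigr|\,[\Omega_\lambda],
\]
since the set of Schur coefficients indexed by partitions inside the rectangle is a subset of all Schur coefficients, and these are already known to be non-negative by Proposition~\ref{af2klr} (or directly from the crystal interpretation). There is no real obstacle in this argument: the work has all been done in the previous theorem, Corollary~\ref{corollary.a}, and Theorem~\ref{theorem.crystal}; the only thing to verify is that the hypothesis $\tilde w\in S_{\hat x}$ is precisely the hypothesis under which Corollary~\ref{corollary.a} applies, which it is by construction.
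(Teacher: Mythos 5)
Your proposal is correct and follows essentially the same route the paper intends: the preceding theorem identifies $a_{\tilde w,\lambda}$ with the Schur coefficients of $F_{\tilde w}$ surviving the quotient map $\bar\psi$, and the hypothesis $\tilde w\in S_{\hat x}$ is exactly what lets Corollary~\ref{corollary.a} convert those coefficients into counts of highest weight factorizations in $B(\tilde w)$. The paper states this corollary without a separate proof, citing precisely this combination, so nothing further is needed.
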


\section{Crystal operator involution}
\label{section.involution}

The broad goal of this article is to introduce the powerful
theory of crystals into the combinatorial study of affine
Schubert calculus and Gromov--Witten invariants.  However,
Proposition~\ref{af2klr} suggests that existing crystal 
theory is not enough to address invariants 
$\langle u,w,v\rangle_d$ without the limitation that
$u$ is Grassmannian.  It is with this in mind that 
we give here an alternate proof of Theorem~\ref{theorem.main} that
circumvents the need for Theorem~\ref{theorem.highestwts}.

An elegant proof of the formulation of the Littlewood--Richardson 
rule as highest weights in the $\mathfrak {sl}_{\ell}$-crystal
of Section~\ref{section.specht} was given by Remmel 
and Shimozono~\cite{RS:1998}.  They substitute a Schur function,
in the product of Schur functions, by its expansion 
in terms of
homogeneous symmetric functions to obtain
$$
	s_\lambda s_\mu = 
	s_\lambda \sum_\alpha \overline K_{\alpha, \mu} \;h_\alpha\
	= \sum_{\nu,\alpha} K_{\nu/\lambda,\alpha}\overline K_{\alpha,\mu} \; s_\nu\,.
$$
Here, the Kostka numbers $K_{\nu/\lambda,\alpha}$ enumerate the set $\SSYT(\nu/\lambda,\alpha)$
of skew tableaux of shape $\nu/\lambda$ and weight $\alpha$,
whereas $\overline{K}_{\alpha,\mu}$ is an alternating sum.
To be precise, let $m$ be a positive integer which weakly exceeds the 
length of the partitions $\alpha$ and $\mu$, let $S_m$ be the symmetric 
group of permutations in $m$ letters, and define $\rho=(m-1,m-2,\ldots,1,0)$.
Then in~\cite[Eq. (1.11)]{RS:1998} the inverse Kostka matrix was expressed as
\begin{equation}
\label{equation.inverse kostka}
	\overline{K}_{\alpha,\mu}= 
	\sum_{\sigma\in S_m}
(-1)^{{\rm sign}(\sigma)} \;,
\end{equation}
over permutations $\sigma$ where $\sigma(\mu+\rho)-\rho \in S_m \alpha$.
From this,
$$
s_\lambda s_\mu = 
\sum_{\nu, \alpha} 
\sum_{(T,\sigma)}
(-1)^{{\rm sign}(\sigma)}  s_\nu\,,
$$
where $\sigma\in S_m$ such that $\sigma(\mu+\rho)-\rho \in S_m \alpha$
and $T\in \SSYT(\nu/\lambda,\alpha)$.
The trick to canceling the negative terms lies in an involution that is defined using $\st_i \et_i$ for a suitable $i$,
where $\et_i$ are the crystal raising operators on tableaux and $\st_i(T) = \et_i^p(T)$ if $p:=\ve_i(T)-\vp_i(T)\ge 0$
and $\st_i(T) = \ft_i^{-p}(T)$ if $p<0$ are the reflections within an $i$-string.
The Littlewood--Richardson rule follows since the Yamanouchi tableaux are fixed points under the involution.

In the same spirit, we produce a sign-reversing involution 
using the crystal operators on affine factorizations.

\begin{definition}
\label{definition.theta}
For any $\mu\subset(a^{n-a})$ and 
$w\in S_{\hat x}$ for some fixed $x\in[n]$,
define $\theta$ to act on
the set of pairs
$\bigcup_\beta\{\sigma \in S_m \mid \sigma(\mu+\rho)-\rho=\beta\}
\times \mathcal W_{w,\beta}$ for $m>\ell(w)$
by
\begin{equation*}
\theta (\sigma,w^\beta)=
\begin{cases}
(\sigma,w^\beta)&\text{when}\quad 
X=\emptyset,
\\
(s_r\sigma,\st_r \et_r(w^\beta))& \text{otherwise}
\,,
\end{cases}
\end{equation*}
where $X= \bigcup_i L_i(w^\beta)$ and
$r=\max\{i \mid \max(X)\in L_i(w^\beta)\}$.
\end{definition}

\begin{prop}
\label{signrev}
For any $\mu\subset(a^{n-a})$ and 
$w\in S_{\hat x}$ for some fixed $x\in[n]$,
$\theta$ is an involution 
whose fixed points are 
$\{(\id,w^\mu) \mid w^\mu \text{ is a highest weight
factorization of $w$ with weight $\mu$}\}\,.$
\end{prop}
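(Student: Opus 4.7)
The plan is to verify three things: $\theta$ maps the specified domain to itself, its fixed‐point set is precisely $\{(\id,w^\mu)\mid w^\mu$ a highest weight factorization of $w$ of weight $\mu\}$, and $\theta^2=\id$. Sign-reversal on the non-trivial branch is immediate from $\sign(s_r\sigma)=-\sign(\sigma)$.

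For well-definedness in the $X\neq\emptyset$ branch, the choice $b=\max(X)$ lies in some $L_r(w^\beta)$, so $L_r(w^\beta)\neq\emptyset$ and Proposition~\ref{prop:stayput} yields $\et_r(w^\beta)\in\mathcal W_{w,\beta+\alpha_r}$, hence $\st_r\et_r(w^\beta)\in\mathcal W_{w,s_r(\beta+\alpha_r)}$. A one-line calculation using $\rho=(m-1,\ldots,1,0)$ shows
\[
s_r\sigma(\mu+\rho)-\rho \;=\; s_r(\beta+\rho)-\rho \;=\; s_r(\beta+\alpha_r),
\]
confirming the new pair is in the domain. For the fixed-point analysis, by the definition of $\theta$ we have $\theta(\sigma,w^\beta)=(\sigma,w^\beta)$ if and only if $X=\emptyset$, which by Definition~\ref{def:yama} is equivalent to $w^\beta$ being highest weight. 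When $L_r(w^\beta)=\emptyset$ for every $r$, every letter of $\content(w^{r+1})$ is paired with a distinct letter of $\content(w^r)$, forcing $\beta_r\geq\beta_{r+1}$, so $\beta$ is a partition. Since $\mu+\rho$ is strictly decreasing and $\sigma(\mu+\rho)=\beta+\rho$ is then also strictly decreasing, the two strictly decreasing rearrangements of the same multiset coincide, forcing $\sigma=\id$ and $\beta=\mu$.

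The involution $\theta^2=\id$ on the $X\neq\emptyset$ branch rests on two facts: $s_r^2=\id$, and the crystal identity that $\st_r\et_r$ is an involution on any element of an $r$-string with $\ve_r\geq 1$ (immediate from the reflection characterization of $\st_r$ in Proposition~\ref{prop:stayput}(2), since it sends position $k$ to position $L-k$ in an $r$-string of length $L$). Granted these, it suffices to prove the lemma: at $y:=\st_r\et_r(w^\beta)$, one still has $b=\max(X(y))$ and $r=\max\{i\mid b\in L_i(y)\}$. Then $\theta(s_r\sigma,y)=(s_r^2\sigma,\st_r\et_r(y))=(\sigma,w^\beta)$, finishing the argument.

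The lemma is the main obstacle. Because $\st_r\et_r$ alters only factors $r$ and $r+1$, one immediately has $L_i(y)=L_i(w^\beta)$ for every $i\notin\{r-1,r,r+1\}$, so the verification localizes to these indices. The key local observations will be: (i) $\et_r$ removes $b_0=\min L_r(w^\beta)$ from $\content(w^{r+1})$ and inserts a letter no larger than $b$ into $\content(w^r)$, as governed by Lemma~\ref{lem:uvdecomp}; (ii) each atomic step realizing $\st_r$ as an iterate of $\et_r$ or $\ft_r$ (Proposition~\ref{prop:stayput}(2)) similarly acts on the running $\min L_r$ or $\max R_r$, and by a short induction none of these steps selects the letter $b$ itself; (iii) $b$ therefore remains in $\content(w^{r+1})$ and remains unpaired against $\content(w^r)$ in the new pairing, with no strictly larger unpaired letter being created anywhere; and (iv) the condition $b\notin L_{r+1}(w^\beta)$ was determined by the relationship of $b$ to $\content(w^{r+2})$, which is untouched, so $b\notin L_{r+1}(y)$ as well. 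The bookkeeping is elementary but requires a careful case split on whether $b=b_0$ and on the relative sizes of $p=|L_r(w^\beta)|$ and $q=|R_r(w^\beta)|$; it constitutes the technical heart of the proof.
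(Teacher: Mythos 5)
Your overall strategy coincides with the paper's: reduce $\theta^2=\id$ to the facts that $(\st_r\et_r)^2=\id$ on an $r$-string and that the selected letter $\max(X)$ and the selected index $r$ are unchanged by $\st_r\et_r$; handle the fixed points by showing $\beta$ must be a partition, whence $\sigma=\id$ and $\beta=\mu$. Your explicit check that $s_r\sigma(\mu+\rho)-\rho=s_r(\beta+\alpha_r)$, so that the new pair stays in the domain, is a worthwhile addition that the paper leaves implicit.

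The gap is that the lemma you correctly single out as ``the main obstacle'' is exactly where the paper's proof spends nearly all of its effort, and your outline of it stops short of a proof. Concretely, with $z=\max(X)$, $u=w^{\beta_{r+1}}$, $v=w^{\beta_r}$, the paper writes $u=u_1s_zu_2$ and $v=v_1v_2$ with $\content(u_1)=\{c\in\content(u)\mid c>z\}$ and $\content(v_1)=\{c\in\content(v)\mid c\ge z\}$, and proves by a case analysis on $p=|L_1(\et_1(uv))|$ versus $q=|R_1(\et_1(uv))|$, iterating Lemmas~\ref{lem:uvdecomp} and~\ref{lem:Ruvdecomp}, that $\st_1\et_1(uv)=UV$ with $U=u_1s_zU_2$ and $V=v_1V_2$, all letters of $\content(U_2),\content(V_2)$ lying below $z$; in particular $\max(L_1(UV))=z$. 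It then checks the two neighboring pairings: for $\tau=w^{\beta_{r+2}}$, every letter of $\content(\tau)$ that is at least $z$ was paired into $u_1$, which is untouched, so $\max(L_1(\tau U))<z$; and for the factor $w^{\beta_{r-1}}$, every letter of $\content(v_1)$ stays paired, so the maximum only drops. Your step (iv) is also slightly off as stated: $b\notin L_{r+1}(y)$ is not guaranteed merely because $\content(w^{r+2})$ is untouched, since the $w^{r+2}w^{r+1}$-pairing is taken against the \emph{modified} factor $w^{r+1}$; what saves the claim is precisely that the letters of $w^{r+1}$ exceeding $z$ are unchanged. So the plan is right and nothing in it would fail, but the technical core that makes the involution claim true is asserted rather than established.
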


\begin{proof}
Consider a factorization $w^\beta$ of $w\in S_{\hat x}$
with $r=\max\{i \mid \max(X)\in L_i(w^\beta)\}$ and
let $w^\gamma=\tilde s_r\tilde e_r(w^\beta)$.
It is known for crystal operators (see for example~\cite{RS:1998}) that
$(\st_r \et_r)^2$ either acts as the identity or annihilates an element.
Hence our main task in verifying that $\theta$ is an involution
is to prove
$$
\theta^2(\sigma,w^\beta)=(s_r^2\sigma,(\st_r\et_r)^2(w^\beta))
$$
when $X\neq \emptyset$
since $(s_r^2\sigma,(\st_r\et_r)^2(w^\beta)) =(\sigma,w^\beta)$
by the above argument.
To do so, we must show that
$r=\max\{i \mid \max(Y)\in L_i(w^\gamma)\}\;
\text{ for $Y=\bigcup_i L_i(w^\gamma)$}$ when $X\neq \emptyset$.

Let $u v= w^{\beta_{r+1}}w^{\beta_r}$ and $UV=\st_1\et_1(uv)$
and set $z=\max(L_1(uv))$.  Consider 
\begin{equation}
u=(u_1 s_z u_2) \quad \text{and}\quad v=(v_1v_2)
\end{equation}
for $\content(u_1)=\{c\in\content(u) \mid c>z\}$
and
$\content(v_1)=\{c\in\content(v) \mid c\geq z\}.$
We start by showing that $\max(L_1(UV))=z$ and
that
\begin{equation}
\label{UV}
U=(u_1 s_z U_2) \quad \text{and}\quad V=(v_1 V_2)
\,,
\end{equation}
where the elements in $\content(U_2)$ and $\content(V_2)$
are smaller than $z$.
To this end, let $\tilde u \tilde v = \et_1(uv)$ 
and set $p=|L_1(\tilde u\tilde v)|>0$ and $q=|R_1(\tilde u\tilde v)|$.
Then
$$
U V=
\begin{cases}
\et_1^{p-q}(\tilde u\tilde v) & \text{if $p>q$,}\\
\tilde u\tilde v & \text{if $p=q$,}\\
\ft_1^{q-p}(\tilde u\tilde v) & \text{if $p<q$}
\,.
\end{cases}
$$
Let $b=\min(L_1(uv))$ and
$t=\max\{i\geq 0 \mid b-i-1\not\in\content(u)\}$.
\eqref{tuv} tells us that $\tilde u$ differs from $u$ by
the deletion of $b$ and $\tilde v$ differs from $v$
by the addition of $b-t$.
We deduce from Lemma~\ref{lem:uvdecomp} that 
$\max(R_1(\tilde u\tilde v)))=b-t$ (implying that $q>0$)
and $\max(L_1(\tilde u\tilde v))=z$ unless $L_1(\tilde u\tilde v)=\emptyset$.

When $p=q$, $q>0$ implies that
$\max(L_1(UV))=\max(L_1(\tilde u\tilde v))=z$
and \eqref{UV} holds.
When $p>q$, we can iterate our deduction
to find that $\max(L_1(UV))=z$ (since $p-q<p$) and that
$U$ differs from $u$ by the deletion of letters smaller
than $z$ and $V$ differs from $v$ by the addition 
of letters smaller than $z$ implying \eqref{UV}.
If $p<q$, we have that $UV= \ft_1^{q-p-1}(uv)$ since
$\ft_1\et_1=\id$ by Proposition~\ref{prop:stayput}.
Lemma~\ref{lem:Ruvdecomp} can then be used to prove
our claim in this last case.

Now given that $\max(L_1(UV))=z$, 
since $\st_r$ and $\et_r$ act on $w^\beta$ by changing 
only factors $uv$ into $UV$, it suffices to verify
$$
\max\{i \mid \max(Y)\in L_i(\tau UV\sigma)\}=2\;\text{
for $Y=\bigcup_i L_i(\tau UV \sigma)$}\,,
$$
where $\tau=w^{\beta_{r+2}}$ and $\sigma=w^{\beta_{r-1}}$.
Consider the decomposition
$$
\tau \,u = (\tau_1  \tau_2) (u_1 s_z u_2)
\,,
$$
where $\content(\tau_1)=\{c\in\content(\tau) \mid c \ge z\}$.
If $\tilde z=\max(L_1(\tau u))$, then our
choice of $r$ implies $\tilde z< z$.  Therefore, every 
element in $\tau_1$ is paired with something in $u_1$ in the
$\tau\,u$ pairing.  Since 
$\tau \, U = (\tau_1  \tau_2) (u_1 s_z U_2)$,
we find that $\max(L_1(\tau U))<z$.

Next consider
$$
v \sigma = (v_1  v_2) (\sigma_1 \sigma_2)
\,,
$$
where $\content(v_1)=\{c\in\content(v) \mid c\geq z\}$
and $\content(\sigma_1)=\{c\in\content(\sigma) \mid c > z\}$.
Since our choice of $r$ implies here that
$\max(L_1(v\sigma)) \ge z$, we have that
every element in $\content(v_1)$ is paired
with something in $\content(\sigma_1)$.
Since $V=(v_1 V_2)$, we have that $\max(L_1(V\sigma)) \le z$.

Let $(\sigma,w^\beta)$ be a fixed point of $\theta$. 
If $\bigcup_i L_i(w^\beta)=\emptyset$,
then every element of $\content(w^{\beta_i})$ is paired 
with something in $\content(w^{\beta_{i-1}})$ and
in particular, $\beta_{i-1}\leq\beta_{i}$ for all $i$.
Given $\sigma(\mu+\rho)-\rho=\beta$, we have that
$\beta_j=\mu_i+j-i$ where $\sigma(i)=j$.  Since $\beta$ 
and $\mu$ are partitions, $\beta_1=\max\{\mu_i+1-i\}=\mu_1$ 
implying that $\sigma(1)=1$.  By iteration, $\sigma=\id$ and
$\beta=\mu$.  By Definition~\ref{def:yama},
$w^\beta$  is a highest weight factorization of weight $\mu$.
\end{proof}

\begin{remark}
Note that the proof of Proposition~\ref{signrev} goes through in almost the identical
manner if we defined $r=\min\{i \mid \max(X)\in L_i(w^\beta)\}$ instead of
$r=\max\{i \mid \max(X)\in L_i(w^\beta)\}$.
In the $k\to \infty$ limit, these choices correspond to choosing the rightmost
violation of the Yamanouchi word condition in the row and column reading of the 
tableau, respectively. In~\cite{RS:1998}, row reading was chosen, so
$r=\min\{i \mid \max(X)\in L_i(w^\beta)\}$  in our formulation.
\end{remark}

The previous proposition immediately implies that
$$
\sum_\beta 
\sum_{(\sigma,w^\beta)\atop
\theta(\sigma,w^\beta)\neq(\sigma,w^\beta)}
(-1)^{{\rm sign}(\sigma)} 
=0
\,,
$$
leaving only the $\theta$-fixed points $(\id,w^\mu)$, where
$w^\mu$ is a highest weight factorization.

\begin{corollary}
\label{corollary.cancel}
For any $\mu\subset(a^{n-a})$ and 
$w\in S_{\hat x}$ for some fixed $x\in[n]$,
$$
\sum_{\beta}
\sum_{(\sigma,w^\beta)}
(-1)^{{\rm sign}(\sigma)} 
=
\#\{ w^\mu \in \mathcal W_{w,\mu} \mid \text{$w^\mu$ is highest weight } \}
\,,
$$
where the sum is over pairs $(\sigma,w^\beta)$ with 
$\sigma(\mu+\rho)-\rho=\beta$ and $w^\beta$ is an 
affine factorization of $w$ with weight $\beta$.
\end{corollary}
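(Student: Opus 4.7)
The plan is to deduce the corollary as a direct consequence of Proposition~\ref{signrev}, which already does the heavy lifting by exhibiting $\theta$ as a well-defined involution on the indexing set with a completely explicit fixed-point locus. The corollary is essentially the standard sign-reversing-involution bookkeeping applied to $\theta$.

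First, I would partition the indexing set
\[
\Omega = \bigcup_\beta \{\sigma \in S_m \mid \sigma(\mu+\rho)-\rho = \beta\} \times \mathcal W_{w,\beta}
\]
into the fixed points $\Omega^\theta$ of $\theta$ and its complement $\Omega \setminus \Omega^\theta$, so that the sum on the left-hand side decomposes as
\[
\sum_{(\sigma,w^\beta) \in \Omega} (-1)^{\operatorname{sign}(\sigma)}
= \sum_{(\sigma,w^\beta) \in \Omega^\theta} (-1)^{\operatorname{sign}(\sigma)} + \sum_{(\sigma,w^\beta) \in \Omega \setminus \Omega^\theta} (-1)^{\operatorname{sign}(\sigma)}.
\]
Next, I would verify that on non-fixed points, $\theta$ changes the sign of $\sigma$: by Definition~\ref{definition.theta}, $\theta(\sigma, w^\beta) = (s_r \sigma, \st_r \et_r(w^\beta))$ in this case, and left-multiplication by the simple transposition $s_r$ multiplies the sign of $\sigma$ by $-1$. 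Since Proposition~\ref{signrev} guarantees that $\theta$ is an involution, it pairs the elements of $\Omega \setminus \Omega^\theta$ into two-element orbits whose contributions to the signed sum cancel, so the second sum above vanishes.

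It then remains to evaluate the first sum. Proposition~\ref{signrev} identifies the fixed points as exactly the pairs $(\id, w^\mu)$ where $w^\mu \in \mathcal W_{w,\mu}$ is highest weight; in particular $\sigma = \id$ so $(-1)^{\operatorname{sign}(\sigma)} = 1$ on every fixed point. Therefore
\[
\sum_{(\sigma,w^\beta) \in \Omega^\theta} (-1)^{\operatorname{sign}(\sigma)} = \#\{w^\mu \in \mathcal W_{w,\mu} \mid w^\mu \text{ is highest weight}\},
\]
which is the right-hand side of the claimed identity.

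The only substantive content is already in Proposition~\ref{signrev}; the corollary itself is a formal consequence of the existence of a sign-reversing involution with the right fixed-point set. There is no real obstacle — the proof is a one-paragraph splitting argument — so I would simply write it out in this order: decompose the sum, invoke the involution property to cancel non-fixed terms, and read off the fixed-point contribution using the description from Proposition~\ref{signrev}.
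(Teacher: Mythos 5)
Your proof is correct and follows exactly the paper's route: the paper likewise treats the corollary as an immediate consequence of Proposition~\ref{signrev}, cancelling the non-fixed points in pairs (the sign flips because $\theta$ replaces $\sigma$ by $s_r\sigma$) and reading off the fixed-point contribution $(\id,w^\mu)$ with $w^\mu$ highest weight. Nothing is missing.
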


The involution $\theta$ allows us to give an alternate proof of 
Theorem~\ref{theorem.main}.

\begin{proof}[Alternative proof of Theorem~\ref{theorem.main}]
Since \eqref{kschurrec} implies that $c_{Ru,w}^{Rv,k}=c_{u,w}^{v,k}$, we
are only concerned with products $s_\mu s_w^{(k)}$ for
some $\mu\subset (a^{n-a})$.
Substitution of formula \eqref{equation.inverse kostka}
for inverse Kostka numbers into the $h$-expansion of a Schur function
$s_\mu = \sum_\alpha \overline{K}_{\alpha, \mu} \;h_\alpha$ gives 
$$
s_{\mu} = 
\sum_{\alpha \in\mathcal P^n}
\sum_{\sigma\in S_m\atop
\sigma(\mu+\rho)-\rho\in S_m\alpha}
(-1)^{{\rm sign}(\sigma)} 
\, h_{\alpha}\,,
$$
where $\rho=(m-1,m-2,\ldots,1,0)$ and $m\le n$ is weakly 
bigger than the number of parts in $\mu$.  This given,
we use \eqref{iteratekpieri} to $k$-Schur expand
the product of $h_\alpha$ with a $k$-Schur function $s_w^{(k)}$
in 
$$
s_{\mu}\,s_w^{(k)} = 
\sum_{\alpha \in\mathcal P^n}
\sum_{\sigma\in S_m\atop
\sigma(\mu+\rho)-\rho\in S_m\alpha}
(-1)^{{\rm sign}(\sigma)} \, h_{\alpha}\,s_w^{(k)}=
\sum_{v\in\tS}
\sum_{\alpha\in\mathcal P^n}
\mathcal K_{vw^{-1},\alpha}
\!  \!
\sum_{\sigma\in S_m\atop\sigma(\mu+\rho)-\rho\in S_m\alpha}
(-1)^{{\rm sign}(\sigma)} \, 
s^{(k)}_v
\,.
$$
In fact, $\mathcal K_{vw^{-1},\alpha}=\mathcal K_{vw^{-1},\beta}$ for 
any rearrangement $\beta$ of $\alpha$ leading us 
to the expression 
$$
s_{\mu}\,s_w^{(k)} = 
\sum_{v\in\tS}
\sum_{\beta}
\sum_{(\sigma,w^\beta)} 
(-1)^{{\rm sign}(\sigma)} \, s^{(k)}_{v}\,,
$$
where the sum is over pairs $(\sigma,w^\beta)$ with 
$\sigma(\mu+\rho)-\rho=\beta$ and $w^\beta$ is an 
affine factorization of $vw^{-1}$ with weight $\beta$.
Corollary~\ref{corollary.cancel} yields the desired result.

Note that if $\mu$ has only two parts, we can choose $m=2$ and in this
case $\theta$ is still defined as in Definition~\ref{definition.theta} with $r=1$ by 
Section~\ref{subsection.two factor}.
\end{proof}

\appendix
\section{Appendix}
\label{appendix.stembridge}

\begin{proof}[Proof of Theorem~\ref{theorem.crystal}]
The proof proceeds by checking Stembridge's local axioms of Section~\ref{subsection.stembridge}.
We freely use the properties of the operators $\et_i$ and $\ft_i$ established in Section~\ref{subsection.properties}.
The lengths of the monochromatic directed paths are given by $\ve_r(w^\beta)$ and $\vp_r(w^\beta)$
for every $w^\beta \in \mathcal W_w$. By Proposition~\ref{prop:stayput} (3) 
these are given by the number
of unbracketed letters, which is finite. This shows (P1) of Stembridge's local axioms.
(P2) is also ensured by the definition of the crystal operators on affine factorizations.

\subsubsection*{Proof of (P3) and (P4)}
Next we consider axioms (P3) and (P4) by proving that
\begin{equation*}
	(a_{ij},\Delta_i\ve_j(w^\beta),\Delta_i\vp_j(w^\beta))\in\{(0,0,0),(-1,-1,0),(-1,0,-1)\}\,.
\end{equation*} 
If $a_{ij}=0$, then $\et_i$ and $\et_j$ (resp. $\ft_j$) commute, so that indeed
$(a_{ij},\Delta_i\ve_j(w^\beta),\Delta_i\vp_j(w^\beta))=(0,0,0)$ in this case. Next consider $a_{ij}=-1$, so
that $j=i-1$ or $j=i+1$. First assume $j=i-1$. Without loss of generality we may assume that 
$w^\beta = yuv$ is a product of three factors and $i=2$. Since by assumption $\et_2(yuv)$ is defined,
one letter moves from factor $y$ to factor $u$. Call $s_c$ the new generator in $u$ under $\et_2$.
Recall that as in Lemma~\ref{lem:uvdecomp}, we may write
\begin{equation}
\label{equation.uv stembridge}
	uv = (u_1 s_b\cdots s_{b-t} u_2)(v_1s_b \cdots s_{b-t+1} v_2),
\end{equation}
where $b=\min(L_1(uv))$, all letters in $\content(v_1)$ are paired with something in $\content(u_1)$,
and every element in $\content(u_2)$ is with with something in $\content(v_2)$. Note that 
\begin{enumerate}
\item If $c>b$ we have $\ve_1(\et_2 yuv)=\ve_1(yuv)+1$ and $\vp_1(\et_2 yuv)=\vp_1(yuv)$ since still
all letters in $\content(v_1)$ are paired and there is one extra unpaired letter in $\content(u_1)\cup\{c\}$ 
after the application of $\et_1$. Hence $(a_{21},\Delta_2\ve_1(yuv),\Delta_2\vp_1(yuv))=(-1,-1,0)$.
\item If $c<b-t$, we have two cases:
\begin{enumerate}
\item If $c$ does not pair with a letter in $\content(v_2)$, then as before $\ve_1(\et_2 yuv)=\ve_1(yuv)+1$ and 
$\vp_1(\et_2 yuv)=\vp_1(yuv)$, so that again $(a_{21},\Delta_2\ve_1(yuv),\Delta_2\vp_1(yuv))=(-1,-1,0)$.
\item If $c$ does pair with a letter in $\content(v_2)$, then $\ve_1(\et_2 yuv)=\ve_1(yuv)$ and 
$\vp_1(\et_2 yuv)=\vp_1(yuv)-1$, so  $(a_{21},\Delta_2\ve_1(yuv),\Delta_2\vp_1(yuv))=(-1,0,-1)$.
\end{enumerate}
\end{enumerate}
This proves (P3) and (P4) for $j=i-1$.

Now assume that $j=i+1$. Without loss of generality we may assume that $w^\beta=uvy$ is a product of
three factors and $i=1$. Since by assumption $\et_1(uvy)$ is defined, one letter moves from factor $v$ to
factor $y$ under $\et_1$. Call $s_c$ the generator in $v$ that disappears. We can write $uv$ again as
in~\eqref{equation.uv stembridge}. Note that
\begin{enumerate}
\item If $c\in \content(v_1)$, then $\ve_2(\et_1 uvy)=\ve_2(uvy)+1$ and $\vp_2(\et_1 uvy)=\vp_2(uvy)$ since
one less letter is bracketed in $\content(u_1)$. Hence $(a_{12},\Delta_1\ve_2(uvy),\Delta_1\vp_2(uvy))=(-1,-1,0)$.
\item If $c\in \{b-t+1,b-t+2,\ldots,b\}$, then the letter $c-1$ becomes unbracketed in $\content(u)$, so that 
$\ve_2(\et_1 uvy)=\ve_2(uvy)+1$ and $\vp_2(\et_1 uvy)=\vp_2(uvy)$ and hence 
$(a_{12},\Delta_1\ve_2(uvy),\Delta_1\vp_2(uvy))=(-1,-1,0)$.
\item If $c\in \content(v_2)$, we have two cases
\begin{enumerate}
\item If $c$ is paired with a letter $c'\in\content(u_2)$ and $c'$ does not find a new bracketing 
partner in $\content(v_2)$ after $c$ is removed, then $\ve_2(\et_1 uvy)=\ve_2(uvy)+1$ and 
$\vp_2(\et_1 uvy)=\vp_2(uvy)$ and hence $(a_{12},\Delta_1\ve_2(uvy),\Delta_1\vp_2(uvy))=(-1,-1,0)$.
\item If $c$ is paired with a letter $c'\in\content(u_2)$, but $c'$ finds a new bracketing partner in 
$\content(v_2)\setminus \{c\}$, or if $c$ is not bracketed with a letter in $\content(u_2)$, then
$\ve_2(\et_1 uvy)=\ve_2(uvy)$ and $\vp_2(\et_1 uvy)=\vp_2(uvy)-1$, so that 
$(a_{12},\Delta_1\ve_2(uvy),\Delta_1\vp_2(uvy))=(-1,0,-1)$.
\end{enumerate}
\end{enumerate}
This completes the proof of (P3) and (P4) for $j=i+1$.

\subsubsection*{Proof of (P5)}
Next we prove (P5). When $a_{ij}=0$, then $\et_i$ commutes with $\et_j$ and $\ft_j$, so that the conditions
of (P5) follow easily. Next assume that $j=i-1$. It suffices again to consider $w^\beta=yuv$ with $i=2$. 
Then by the analysis in (P3) and (P4) above, we have that $\Delta_2\ve_1(yuv)=0$ only when $c<b-t$
and $c$ pairs with a letter in $\content(v_2)$. In this case $\et_1$ moves the letter $b$ from $u$ to the letter
$b-t$ in $v$ before and after $\et_2$, so that $\et_1\et_2(yuv)=\et_2\et_1(yuv)$.
In addition $\nabla_1 \vp_2(\et_1\et_2 yuv)= \vp_2(\et_2\et_1yuv) - \vp_2(\ft_1\et_1\et_2 yuv)
=\vp_2(\et_1yuv)+1-\vp_2(yuv)-1=0$ since by Lemma~\ref{lem:uvdecomp} all letters in
$\content(u_1 s_b\cdots s_{b-t})$ are bracketed with letters in $y$ and hence do contribute
to neither $\vp_2(yuv)$ nor $\vp_2(\et_1yuv)$ (and $\et_1$ moves $b$). This proves (P5) when $j=i-1$.

Now assume that $j=i+1$. It suffices to consider $w^\beta=uvy$ with $i=1$, and by the above analysis
of cases (P3) and (P4) we have $\Delta_1\ve_2(uvy)=0$ only when $c\in \content(v_2)$ and either $c$
is paired with a letter $c'\in\content(u_2)$, but $c'$ finds a new bracketing partner in 
$\content(v_2)\setminus \{c\}$ after the application of $\et_1$, or if $c$ is not bracketed with a letter in $\content(u_2)$.
Since the letters in $\content(v_2)$ remain bracketed and $b$ from $\content(u)$ is moved to $b-t$ in $\content(v)$,
we have $\et_1\et_2(uvy)=\et_2\et_1(uvy)$. A similar computation as in the case $j=i-1$ shows that
$\nabla_2 \vp_1(\et_1\et_2 uvy)= 0$. This concludes the proof of (P5).

\subsubsection*{Proof of (P6)}

For the proof of (P6) assume that $\Delta_i \ve_j(w^\beta) = \Delta_j \ve_i(w^\beta) = -1$. Without loss of generality we
may assume that $w^\beta=yuv$, $i=2$, and $j=1$. Then let us write 
\begin{equation*}
\begin{split}
 yuv & = y (u_1 s_{b_1} \cdots s_{b_1-t_1} u_2)(v_1 s_{b_1} \cdots s_{b_1-t_1+1} v_2)\\
 yuv & = (y_1 s_{b_2} \cdots s_{b_2-t_2} y_2)(u'_1 s_{b_2} \cdots s_{b_2-t_2+1} u'_2) v
\end{split}
\end{equation*}
in the decomposition according to Lemma~\ref{lem:uvdecomp} with respect to $\et_1$ and $\et_2$, respectively.
All letters in $\content(v_1)$ pair with letters in $\content(u_1)$ and all letters in $\content(u_2)$ pair with letters in $\content(v_2)$.
In addition, all letters in $\content(u_1')$ pair with letters in $\content(y_1)$ and all letters in $\content(y_2)$ pair with letters in $\content(u_2')$.

By the analysis of (P3) and (P4), $\Delta_2 \ve_1(yuv)=-1$ unless the new letter $b_2-t_2$ in $\content(u)$ under $\et_2$ satisfies $b_2-t_2<b_1-t_1$
and pairs with a letter in $\content(v_2)$. This implies in particular that $\content(u_2)\subseteq \content(u_2')$.

By the analysis of (P3) and (P4), $\Delta_1 \ve_2(yuv)=-1$ if $b_1 \not \in \content(u_2')$ or $b_1\in \content(u_2')$ but $b_1$ is paired with $c'\in y_2$
for the $\et_2$ bracketing and $c'$ cannot find a new bracketing partner in $u$ when $b_1$ is removed by $\et_1$. 

First assume that $b_2-t_2 \ge b_1 - t_1$. Since $b_2-t_2$ is the new letter in $u$ under $\et_2$ this means in particular that
$b_2-t_2>b_1$ (since all letters $b_1-t_1,\ldots, b_1-1,b_1$ already appear in $\content(u)$). Hence $b_1 \in \content(u_2')$ and we can
write $yuv$ as 
\[
	(y_1 s_{b_2} \cdots s_{b_2-t_2} \overbrace{y_2}^{s_{c'}\in}) (u_1' s_{b_2} \cdots s_{b_2-t_2+1} \overbrace{\tilde{u}_2 s_{b_1} \cdots s_{b_1-t_1} u_2}^{u_2'}) 
	(v_1 s_{b_1} \cdots s_{b_1-t_1+1} v_2),
\]
where possibly $\tilde{u}_2=1$. The letter $b_1$ is paired with $c'\in \content(y_2)$. Since $c'$ cannot find a new bracketing partner when 
$b_1$ is removed by $\et_1$, all letters in $\content(\tilde{u}_2)$ must be paired with letters in $\content(y_2)$. Now computing 
$\et_1\et_2^2 \et_1(yuv)$ we obtain
\begin{itemize}
\item under $\et_1$ the letter $b_1$ moves from $u$ to $b_1-t_1$ in $v$;
\item under $\et_2$ the letter $c'$ moves from $y$ to some letter $c''$ in $u$; since it is smaller than $b_1-t_1$ it must bracket with $b_1-t_1$ in $v$
(or some other letter in $v$);
\item under $\et_2$ the letter $b_2$ moves from $y$ to $b_2-t_2$ in $u$;
\item under $\et_1$ the rightmost unbracketed letter $b_2-t_2 \ge i>b_1$ moves from $u$ to $v$.
\end{itemize}
Next computing $\et_2 \et_1^2 \et_2(yuv)$ we obtain
\begin{itemize}
\item under $\et_2$ the letter $b_2$ moves from $y$ to $b_2-t_2$ in $u$;
\item under $\et_1$ the letter $b_1$ moves from $u$ to $b_1-t_1$ in $v$;
\item under $\et_1$ the rightmost unbracketed letter $b_2-t_2 \ge i>b_1$ moves from $u$ to $v$;
\item under $\et_2$ the letter $c'$ moves from $y$ to $c''$ in $u$; again it is bracketed with a letter in $v$.
\end{itemize}
This shows that $z:=\et_1\et_2^2 \et_1(yuv)=\et_2 \et_1^2 \et_2(yuv)$. It remains to verify that $\nabla_1 \vp_2(z)=\nabla_2 \vp_1(z)=-1$.
By the above explicit description of the action of $\et_1$, we find that $\vp_2(\et_2^2 \et_1 yuv) = \vp_2(\et_1 yuv)+2 = \vp_2(yuv)+2$.
When we act with $\et_1$ on $\et_2^2 \et_1 yuv$, the rightmost unbracketed letter $b_2-t_2 \ge i>b_1$ moves from $u$ to $v$.
If $i=b_2-t_2$, then certainly $\vp_2(z)=\vp_2(yuv)+1$. Otherwise the letter in $\content(y)$ which was before bracketed with $i$, 
brackets with $b_2-t_2$ after the application of $\et_1$. Hence again $\vp_2(z)=\vp_2(yuv)+1$. Altogether $\nabla_1 \vp_2(z)=-1$.
Similarly, $\vp_1(\et_1^2 \et_2 yuv) = \vp_1(\et_2 yuv) +2 = \vp_1(yuv)+2$. When we act with $\et_2$ on $\et_1^2\et_2 yuv$, the new
letter $c''$ brackets with $b_1-t_1$ or another letter in $v_2$. Hence $\vp_1(\et_2 \et_1^2 \et_2 yuv) = \vp_1(yuv)+1$. This implies 
$\nabla_2 \vp_1(z)=-1$.
This concludes the proof of (P6) when $b_2-t_2 \ge b_1 - t_1$.

Next assume that $b_1-t_1=b_2-t_2+1$. In this case $s_{b_2} \cdots s_{b_2-t_2+1}$ and $s_{b_1} \cdots s_{b_1-t_1}$ in $u$
overlap. In particular $b_1\le b_2$, so that $yuv$ can be written as
\[
	(y_1 s_{b_2}\cdots s_{b_2-t_2} y_2) ( \overbrace{u_1' s_{b_2} \cdots }^{u_1}s_{b_1} \cdots \underbrace{s_{b_2-t_2+1}}_{=s_{b_1-t_1}} \overbrace{u_2}^{=u_2'})
	(v_1 s_{b_1} \cdots s_{b_1-t_1+1} v_2)\;.
\]
Computing $\et_1 \et_2^2 \et_1(yuv)$ we obtain
\begin{itemize}
\item under $\et_1$ the letter $b_1$ moves from $u$ to $b_1-t_1$ in $v$;
\item under $\et_2$ the letter $b_1-1$ moves from $y$ to $b_2-t_2$ in $u$;
\item under $\et_2$ the letter $b_2$ moves from $y$ to $b_1$ in $u$;
\item under $\et_1$ the letter $b_1$ moves from $u$ to a letter $i\le b_1-t_1-1$ in $v$.
\end{itemize}
Similarly, computing $\et_2 \et_1^2 \et_2(yuv)$ we have
\begin{itemize}
\item under $\et_2$ the letter $b_2$ moves from $y$ to $b_2-t_2$ in $u$;
\item under $\et_1$ the letter $b_2-t_2=b_1-t_1-1$ moves from $u$ to a letter $i\le b_1-t_1-1$ in $v$;
\item under $\et_1$ the letter $b_1$ moves from $u$ to a letter $b_1-t_1$ in $v$;
\item under $\et_2$ the letter $b_1-1$ moves from $y$ to $b_2-t_2$ in $u$.
\end{itemize}
This implies that $z:=\et_1\et_2^2 \et_1(yuv)=\et_2 \et_1^2 \et_2(yuv)$. By very similar arguments to the previous case we also have
$\nabla_1 \vp_2(z)=\nabla_2 \vp_1(z)=-1$. This concludes the proof of (P6) when $b_1-t_1=b_2-t_2+1$.

Finally assume that $b_2-t_2<b_1-t_1-1$. Then we have
\[
	(y_1 s_{b_2}\cdots s_{b_2-t_2} y_2) (\overbrace{u_1 s_{b_1}\cdots s_{b_1-t_1} \tilde{u}_1}^{=u_1'} s_{b_2} \cdots s_{b_2-t_2+1} u_2')
	(v_1 s_{b_1} \cdots s_{b_1-t_1+1} v_2).
\]
Computing $\et_1 \et_2^2 \et_1(yuv)$ we obtain
\begin{itemize}
\item under $\et_1$ the letter $b_1$ moves from $u$ to the letter $b_1-t_1$ in $v$;
\item under $\et_2$ the letter $b_2$ moves from $y$ to $b_2-t_2$ in u. Since by assumption $\Delta_2 \ve_1(yuv)=-1$, the new letter $b_2-t_2$ 
in $u$ does not pair with a letter in $\content(v_2)$;
\item since $b_1$ was moved from $u$ by $\et_1$, there is at least one free letter in $\content(y_1)$ which is not bracketed with a letter in $u$. 
Let $c'$ be the smallest such letter. Under $\et_2$ the letter $c'$ moves from $y$ to a letter $c''<b_1-t_1$ in $u$;
\item under $\et_1$ the letter $b_2-t_2$ moves from $u$ to a letter $i\le b_2-t_2$ in $v$.
\end{itemize}
Next computing $\et_2 \et_1^2 \et_2(yuv)$ yields
\begin{itemize}
\item under $\et_2$ the letter $b_2$ moves from $y$ to $b_2-t_2$ in $u$; again since $\Delta_2 \ve_1(yuv)=-1$, the new letter $b_2-t_2$ 
in $u$ does not pair with a letter in $\content(v_2)$;
\item under $\et_1$ the letter $b_2-t_2$ in $u$ moves to a letter $i\le b_2-t_2$ in $v$;
\item under $\et_1$ the letter $b_1$ in $u$ moves to $b_1-t_1$ in $v$;
\item under $\et_2$ the same letter $c'$ from the previous case moves from $y$ to a letter $c''<b_1-t_1$ in $u$.
\end{itemize}
Again, this show that $z:=\et_1\et_2^2 \et_1(yuv)=\et_2 \et_1^2 \et_2(yuv)$. By very similar arguments to the previous case we also have
$\nabla_1 \vp_2(z)=\nabla_2 \vp_1(z)=-1$. This concludes the proof of (P6).

\subsubsection*{Proof of (P5') and (P6')}

(P5') and (P6') follow from duality. On $x+1<\cdots<0<\cdots<x-2<x-1$ define the order reversing
map $*:i \mapsto 2x-i$. We extend this map to words $a=a_1\cdots a_h \mapsto  a_h^* \cdots a_1^*$
and affine factorizations $*: w^\beta=w^\ell \cdots w^1 \mapsto w^{1*} \cdots w^{\ell *}$, where
$w^{i*}$ is the element in $S_{\hat{x}}$ corresponding to $\content(w^i)^*$.
Note that the bracketing for the letters in the factors $w^{r+1}$ and $w^r$ used for the crystal operators 
is equivalent to bracketing all pairs $i\in w^{r+1}$ and $i+1\in w^r$, removing these mentally, then
bracketing all pairs $i\in w^{r+1}$ and $i+2\in w^r$ from the remaining letters etc.. 
This shows that $* \circ \et_r = \ft_{\ell-r} \circ *$ and  $* \circ \ft_r = \et_{\ell-r} \circ *$. 
Hence (P5') and (P6') follow from (P5) and (P6).
\end{proof}


{\large (Jennifer Morse) Department of Mathematics,
Drexel University, Philadelphia, PA 19104

{\it Email address}:\;\texttt{morsej@math.drexel.edu} }

\smallskip

{\large (Anne Schilling) Department of Mathematics, UC Davis, 
One Shields Ave., Davis, CA 95616-8633

{\it Email address}:\;\texttt{anne@math.ucdavis.edu}}

\end{document}